\definecolor{darkgreen}{rgb}{0,0.5,0}
\newtheorem{theorem}{Theorem}[section]
\newtheorem*{theorem*}{Theorem}
\newtheorem{proposition}[theorem]{Proposition}
\newtheorem{lemma}[theorem]{Lemma}
\newtheorem{remark}[theorem]{Remark}
\newtheorem{example}[theorem]{Example}
\newtheorem{conjecture}[theorem]{Conjecture}
\DeclareMathOperator{\SL}{SL}
\DeclareMathOperator{\coker}{coker}
\DeclareMathOperator{\Supp}{Supp}
\DeclareMathOperator{\Sing}{Sing}
\DeclareMathOperator{\Ext}{Ext}
\DeclareMathOperator{\Hom}{Hom}
\DeclareMathOperator{\Pic}{Pic}
\DeclareMathOperator{\rank}{rank}
\DeclareMathOperator{\rk}{rk}
\newcommand{\N}{\mathbb{N}}
\newcommand{\Q}{\mathbb{Q}}
\newcommand{\R}{\mathbb{R}}
\newcommand{\OO}{\mathcal{O}}
\newcommand{\U}{\mathcal{U}}
\newcommand{\ev}{ev}
\DeclareMathOperator{\w}{\underline{w}}
\DeclareMathOperator{\wa}{(\w,\alpha)}
\DeclareMathOperator{\wdeg}{deg_{\underline{w}}}
\DeclareMathOperator{\wrank}{rk_{\underline{w}}}
\DeclareMathOperator{\Gr}{Gr}
\newcommand{\Gwa}{\mathcal{G}_{(C,\w),\alpha}}
\newcommand{\GwL}{\mathcal{G}_{(C,\w),L}}
\newcommand{\Bw}{\mathcal{B}_{(C,\w)}}
\newcommand{\Uw}{\mathcal{U}_{(C,\w)}}
\newcommand{\Btw}{\tilde{\mathcal{B}}_{(C,\w)}}
\DeclareMathOperator{\urank}{\underline{rk}}
\newcommand{\ur}{\underline{r}}
\newcommand{\nc}{\newcommand}
\newcommand{\un}[1]{\underline{#1}}
\nc{\cH}{{\mathcal H}}
\nc{\cA}{{\mathcal A}}
\nc{\cG}{{\mathcal G}}
\nc{\cC}{{\mathcal C}}
\nc{\cD}{{\mathcal D}}
\nc{\cO}{{\mathcal O}}
\nc{\cI}{{\mathcal I}}
\nc{\cB}{{\mathcal B}}
\nc{\cY}{{\mathcal Y}}
\nc{\cK}{{\mathcal K}} 
\nc{\cX}{{\mathcal X}}
\nc{\cS}{{\mathcal S}}
\nc{\cE}{{\mathcal E}}
\nc{\cF}{{\mathcal F}}
\nc{\cZ}{{\mathcal Z}}
\nc{\cQ}{{\mathcal Q}}
\nc{\cN}{{\mathcal N}}
\nc{\cP}{{\mathcal P}}
\nc{\cL}{{\mathcal L}}
\nc{\cM}{{\mathcal M}}
\nc{\cT}{{\mathcal T}}
\nc{\cW}{{\mathcal W}}
\nc{\cU}{{\mathcal U}}
\nc{\cJ}{{\mathcal J}}
\nc{\cV}{{\mathcal V}}
\nc{\cR}{{\mathcal R}}
\nc{\bH}{{\mathbb H}}
\nc{\bA}{{\mathbb A}}
\nc{\bG}{{\mathbb G}}
\nc{\bC}{{\mathbb C}}
\nc{\bO}{{\mathbb O}}
\nc{\bI}{{\mathbb I}}
\nc{\bB}{{\mathbb B}}
\nc{\bY}{{\mathbb Y}}
\nc{\bK}{{\mathbb K}} 
\nc{\bX}{{\mathbb X}}
\nc{\bS}{{\mathbb S}}
\nc{\bE}{{\mathbb E}}
\nc{\bF}{{\mathbb F}}
\nc{\bZ}{{\mathbb Z}}
\nc{\bQ}{{\mathbb Q}}
\nc{\bN}{{\mathbb N}}
\nc{\bP}{{\mathbb P}}
\nc{\bL}{{\mathbb L}}
\nc{\bM}{{\mathbb M}}
\nc{\bT}{{\mathbb T}}
\nc{\bW}{{\mathbb W}}
\nc{\bU}{{\mathbb U}}
\nc{\bD}{{\mathbb D}}
\nc{\bJ}{{\mathbb J}}
\nc{\bV}{{\mathbb V}}
\nc{\bR}{{\mathbb R}}
\newcommand{\cHom}{\mathcal{H}om}
\newcommand{\cExt}{\mathcal{E}xt}
\numberwithin{equation}{section}
\begin{document}

\title[Higher-rank Brill-Noether loci on nodal reducible curves]
{Higher-rank Brill-Noether loci on nodal reducible curves}

\author{Sonia Brivio}
\address{Dipartimento di Matematica e Applicazioni,
	Universit\`a degli Studi di Milano-Bicocca,
	Via Roberto Cozzi, 55,
	I-20125 Milano, Italy}
\email{sonia.brivio@unimib.it}

\author{Filippo F. Favale}
\address{Dipartimento di Matematica,
	Universit\`a degli Studi di Pavia,
	via Ferrata, 5
	I-27100 Pavia, Italy}
\email{filippo.favale@unipv.it}

\date{\today}
\thanks{
\textit{2020 Mathematics Subject Classification}: Primary: 14H60; Secondary: 14H51,14F06,14D20\\
\textit{Keywords}:  Brill-Noether loci, Nodal curves, Polarizations, Stability,  Moduli spaces \\
\\
Both authors are partially supported by INdAM - GNSAGA. The second author is partially supported by PRIN 2017 \emph{``Moduli spaces and Lie theory''} and by (MIUR): Dipartimenti di Eccellenza Program (2018-2022) - Dept. of Math. Univ. of Pavia. \\
}

\begin{abstract}
In this paper, we deal with Brill-Noether theory for higher-rank sheaves on a polarized nodal reducible curve $(C,\w)$ following the ideas of \cite{BGN97}. We study the Brill-Noether loci of $\w$-stable depth one sheaves on $C$ having rank $r$ on all irreducible components and having small slope. In analogy with what happens in the smooth case, we prove that these loci are closely related to  BGN extensions. Moreover, we produce irreducible components of the expected dimension for these Brill-Noether loci. 
\end{abstract}

\maketitle

\section*{Introduction}
Classical Brill-Noether theory was born in the last century 
in order to describe
the subchemes $W^{k-1}_d$ of $\Pic^d(C)$ parametrizing degree $d$-line bundles
on a smooth curve $C$ having  at least $k$ linearly independent global sections. Geometric properties of these loci (such as non-emptyness, irreducibily, connectedess, dimension and singularities) have been completely studied at least for a general curve. For a full treatment of the topic see \cite{ACGH}. 
\medskip

The notion of Brill-Noether locus has been extended in the years to vector bundles of higher-rank (see \cite{MerP} for an historical overview). These loci are  closed subschemes of the moduli space $\cU_{C}^s(r,d)$ parametrizing stable vector bundles of rank $r$ and degree $d$ on a smooth curve $C$.
More precisely, the Brill-Noether locus 
$$\cB_C(r,d,k)=\{[E]\in \cU_C^s(r,d)\,|\, h^0(E)\geq k\},$$
parametrizes isomorphism classes of vector bundles of rank $r$ and degree $d$ with at least $k$ independent global sections. A similar definition has been introduced for equivalence classes of semitable vector bundles.
\medskip

The higher-rank case is far from beeing completely understood. We recall some of the first results about the geometry of these loci which are related to the content of our paper.  The case $k=1$ has been studied  by  \cite{Sun} and \cite{Lau}. More general results are due to Teixidor i Bigas (see \cite{TeiBN1,TeiBN2}) while Brambila-Paz, Grzegorczyk and Newstead have studied Brill-Noether loci for vector bundles with small slope (see \cite{BGN97}). 
In particular, we are interested in the  following seminal result:
\begin{theorem*}[Theorems $A +B$ of \cite{BGN97}]
Let $C$ be a smooth curve of genus $g$. Let $r\geq 2$ and $0 \leq d\leq r$. Then, the Brill-Noether locus $\cB_C(r,d,k)$ is non-empty if and only $$d > 0, \quad kg\leq r(g-1)+d\quad \mbox{ and }\quad (d,k)\neq (r,r).$$ 
Under these assumptions, it is irreducible of dimension equal to the Brill-Noether number $\beta_C(r,d,k)$ and $\Sing(\cB_C(r,d,k))= \cB_C(r,d,k+1)$.
\end{theorem*}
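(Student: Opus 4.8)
The plan is to realize $\cB_C(r,d,k)$ as (the image of) a space of \emph{BGN extensions} and to transfer irreducibility, dimension and smoothness from the latter. I would first settle the easy necessary conditions by stability. If $d\le 0$, a nonzero section yields an inclusion $\cO\hookrightarrow E$ with $\mu(\cO)=0\ge \mu(E)$, impossible for a stable $E$ of rank $r\ge 2$, so $d>0$ is forced; and if $(d,k)=(r,r)$ then $r$ independent sections produce $\cO^{r}\hookrightarrow E$ with equal degrees, forcing $E\cong\cO^{r}$, which is not stable. The remaining inequality $kg\le r(g-1)+d$ will emerge below as the condition under which the extension construction is non-empty.

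The technical core is a structural lemma valid in the small-slope range $0<\mu(E)=d/r\le 1$: for a stable $E$ with $h^0(E)\ge k$, a general $k$-dimensional subspace of $H^0(E)$ evaluates to a \emph{subbundle}, giving a short exact sequence
\begin{equation*}
0\longrightarrow \cO^{k}\longrightarrow E\longrightarrow F\longrightarrow 0,\qquad \rk F=r-k,\quad \deg F=d,
\end{equation*}
with $F$ locally free and $\Hom(F,\cO)=H^0(F^\vee)=0$ (the latter because a quotient $F\twoheadrightarrow\cO$ would give a quotient of $E$ of slope $0<\mu(E)$, contradicting stability). Conversely, I would define a BGN extension as such a non-split sequence in which the $k$ classes $e_1,\dots,e_k\in\Ext^1(F,\cO)=H^1(F^\vee)$ are linearly independent, and prove that a general BGN extension has $E$ stable with $h^0(E)=k$. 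This yields a correspondence between $\cB_C(r,d,k)$ and equivalence classes of BGN extensions. The delicate point here is the subbundle claim: I must use $\mu\le 1$ and stability to rule out a torsion quotient (equivalently, to control the degree and the number of sections of the saturated image of $H^0(E)$), and this is precisely where the hypothesis $0<d\le r$ is indispensable.

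I would then globalize. The quotients $F$ vary in the irreducible moduli space $\cU_C(r-k,d)$, of dimension $(r-k)^2(g-1)+1$, and over the locus where $F$ is (semi)stable of slope $d/(r-k)>0$ one has $h^0(F^\vee)=0$ and
\begin{equation*}
\dim\Ext^1(F,\cO)=h^1(F^\vee)=d+(r-k)(g-1).
\end{equation*}
A BGN extension is determined by $[F]$ together with the $k$-plane $\langle e_1,\dots,e_k\rangle\subseteq H^1(F^\vee)$, the $\Aut(\cO^{k})=\GL_k$ ambiguity being absorbed by passing to $\Gr\bigl(k,\,H^1(F^\vee)\bigr)$ and $\Aut(F)=\C^{*}$ acting trivially on it. Hence the parameter space fibers over the irreducible $\cU_C(r-k,d)$ with irreducible Grassmannian fibers, proving $\cB_C(r,d,k)$ irreducible, and its dimension is
\begin{equation*}
(r-k)^2(g-1)+1+k\bigl(d+(r-k)(g-1)-k\bigr)=\beta_C(r,d,k).
\end{equation*}
Crucially, this Grassmannian is non-empty exactly when $k\le h^1(F^\vee)=d+(r-k)(g-1)$, which rearranges to $kg\le r(g-1)+d$; this both gives the sufficiency of the three conditions for non-emptiness and explains why the inequality, rather than the weaker $\beta_C(r,d,k)\ge 0$, is the right bound.

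Finally, for the singular locus I would use deformation theory on $\cU_C^s(r,d)$: at $[E]$ with $h^0(E)=k$ the tangent space to $\cB_C(r,d,k)$ is the annihilator of the image of the Petri map
\begin{equation*}
\mu_E\colon H^0(E)\otimes H^0(E^\vee\otimes\omega_C)\longrightarrow H^0(E^\vee\otimes E\otimes\omega_C),
\end{equation*}
so $[E]$ is a smooth point of dimension $\beta_C(r,d,k)$ precisely when $\mu_E$ is injective. Using the BGN extension structure I would show $\mu_E$ is injective on the open stratum $\{h^0(E)=k\}$, so that stratum is smooth of the expected dimension, while every point of $\cB_C(r,d,k+1)$ has $h^0\ge k+1$ and strictly larger tangent space, hence is singular; this gives $\Sing(\cB_C(r,d,k))=\cB_C(r,d,k+1)$. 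I expect the two main obstacles to be the subbundle statement of the structural lemma and the injectivity of $\mu_E$ on the top stratum, the latter being what simultaneously pins down the smoothness and the exact identification of the singular locus.
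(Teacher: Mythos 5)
The paper does not actually prove this statement: it is quoted verbatim from \cite{BGN97}, and the relevant comparison is therefore with the original argument of Brambila-Paz, Grzegorczyk and Newstead, which is also the template the paper adapts to nodal curves in Theorem \ref{THM:genTHMb}, Proposition \ref{PROP:Estab} and Theorem \ref{THM:Main}. Your route (nowhere-vanishing sections for stable bundles of slope at most $1$, BGN extensions, a Grassmannian fibration over the moduli of quotients, the Petri map for smoothness) is exactly that route, so the issue is not the choice of approach but two genuine gaps in its execution.

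The serious gap is in the irreducibility and dimension argument. You fiber the space of BGN extensions over $\cU_C(r-k,d)$, i.e.\ you assume the quotient $F$ is semistable, but the structural lemma does not give this. Stability of $E$ only forces every quotient bundle $Q$ of $F$ to satisfy $\mu(Q)>d/r$, which is strictly weaker than the semistability requirement $\mu(Q)\ge d/(r-k)$; for instance, with $(r,d,k)=(5,4,2)$, a rank-$3$, degree-$4$ quotient $F$ admitting a line bundle quotient of degree $1$ is unstable, yet all slope inequalities imposed by the stability of $E$ are satisfied. Hence the family you parametrize need not exhaust $\cB_C(r,d,k)$, and ``the image of an irreducible family is irreducible'' says nothing about components you may have missed. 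The repair, which is what \cite{BGN97} actually carries out, is either (i) to parametrize BGN extensions over the irreducible family of \emph{all} bundles $F$ of rank $r-k$ and degree $d$ with $h^0(F^*)=0$ (an open condition, and $h^1(F^*)=d+(r-k)(g-1)$ is then constant by Riemann--Roch, so one still gets a Grassmannian bundle), or (ii) to keep $\cU_C(r-k,d)$ but add two estimates: every irreducible component of $\cB_C(r,d,k)$ has dimension at least $\beta_C(r,d,k)$ (the determinantal bound, cf.\ Proposition \ref{PROP:scheme}), and the locus of $E$ whose associated quotient is not semistable has dimension strictly smaller than $\beta_C(r,d,k)$; only then is every component forced into the closure of your fibration.

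The second gap is that the converse step --- ``a general BGN extension has $E$ stable with $h^0(E)=k$'' --- is asserted with no indication of proof, yet it is the hardest part of Theorems $A+B$: it is precisely what yields non-emptiness under $d>0$, $kg\le r(g-1)+d$, $(d,k)\ne(r,r)$, and it occupies the technical core of \cite{BGN97} (in the paper's nodal generalization this is the content of Proposition \ref{PROP:Estab}, where stability of $E$ must be extracted from stability of the restrictions and the conditions $(\star)_j$). Without it you have necessity of the numerical conditions but not sufficiency. A smaller slip: your exclusion of $(d,k)=(r,r)$ via ``$\cO^r\hookrightarrow E$ with equal degrees, forcing $E\cong\cO^r$'' is garbled, since $\deg\cO^r=0\neq r$; the correct argument is that, by the nowhere-vanishing lemma, the evaluation map $\cO^r\to E$ would be an injective morphism of bundles of equal rank, hence an isomorphism, contradicting $\deg E=r>0$. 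The remaining ingredients (the nowhere-vanishing lemma itself, the dimension count, Petri injectivity on the stratum $h^0=k$ via left-exactness of $H^0$, and the determinantal argument that points of $\cB_C(r,d,k+1)$ are singular) are sound and match the standard proof.
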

We recall that in the above result,  Brill-Noether loci are described as spaces of particular  extensions of a semistable sheaf by a trivial one, which have been introduced in \cite{BGN97} and in the sequel  have been called BGN estensions (see \cite{BG02}).
\medskip

Brill-Noether theory for higher-rank extends naturally to the case of nodal irreducible curves by considering  stable torsion free sheaves and their moduli spaces (see \cite{Bho07}). In particular, in \cite{Bho07},
Theorems $A+B$ of \cite{BGN97} have been extended almost completely for any nodal irreducible curve. 
\medskip

In this paper we deal with Brill-Noether theory for higher-rank on nodal reducible curves following the ideas of \cite{BGN97}.

As in the irreducible nodal case, one can not consider only locally free sheaves in order to construct compact moduli spaces: one need to take into account also depth one sheaves. Moreover, one have to choose a polarization $\w$ on the curve in order to have moduli spaces for these sheaves.
For details, one can refer to subsection \ref{Subsec:depthone}.
If $\w$ is a polarization on a nodal reducible curve $C$, we denote by $\Uw^s(\ur,d)$ the moduli space of isomorphism classes of $\w$-stable depth one sheaves on $C$ with  multirank $\ur$ and $\w$-degree $d$. Then, for any integer $k \geq 1$, the Brill-Noether loci can be naturally defined as the following subsets:
$$\Bw(\ur,d,k) = \{ [F] \in \Uw^s(\ur,d) \,\vert\, h^0(F) \geq k \}.$$
A similar definition can be given by considering the moduli space $\Uw(\ur,d)$, parametrizing equivalence classes of $\w$-semistable depth one sheaves on $C$ (see Section \ref{SEC:2}).

The description of these subsets given by Mercat in the smooth case works, with  necessary technical adjustment, even for a reducible nodal curve, so  we obtain  a closed subscheme structure for these loci (see Proposition \ref{PROP:scheme}). 
Unfortunately, even for an irreducible nodal curve the local study cannot be carried out as in the smooth case unless we consider a locally free sheaf (see \cite{Bho07} and \cite{Bho}).
For this reason,  we restrict our attention to depth one sheaves having rank $r$ on all irreducible component of $C$. By doing this, we have a moduli space 
$\Uw^s(r\cdot \underline{1},d)$ whose general element is a locally free sheaf of
rank $r$ and  degree $d$ (see \cite{Tei91} and \cite{Tei95}). 
Here, as in the smooth case, we can define the Brill-Noether number
$\beta_C(r,d,k)$ and the local study of smoothness  at  a locally free sheaf in $\Bw(r\cdot \underline{1},d,k)$  can be done as in the smooth case (see Proposition  \ref{PROP:Petri}).
\medskip

The purpose of this paper is to understand whether the results of Theorems $A+B$ of \cite{BGN97} can be extended to nodal reducible curves. 
We recall that Theorems $A+B$ imply that the elements of the Brill-Noether loci  for small slope (i.e.
$0 \leq d \leq r$) are {\it all} given by BGN extensions. 
The authors of this paper generalized the notion of BGN extension to the case of depth one sheaves on a nodal reducible curve in \cite{BFBGN} and they described the space parametrizing BGN etensions (as a moduli space of coherent systems).
The behaviour of these spaces is extremely wild, unless one chooses a polarization which is {\it good} (see \cite{BFPol} or Section \ref{SEC:2} for details).
Unfortunately, also by working with good polarizations, not all elements of the Brill-Noether loci are given by BGN extensions, as Example \ref{EX:noBGN} shows. Nevertheless, we prove that this holds when we consider locally free sheaves by giving the following partial generalization of Theorems $A+B$:

\begin{theorem*}[Theorem \ref{THM:genTHMb}]
Let $(C,\w)$ be a polarized nodal curve with $\w$ good. Let $r,k,d\in \bN$ such that $r \geq 2$, $k \geq 1$ and $d \geq 0$. Let $E$ be a locally free sheaf in $\Bw( r\cdot \underline{1},d,k)$ which satisfies at least one of the following two conditions:
\begin{enumerate}[(a)]
    \item $ 0 \leq d\leq r$;
    \item for any  irreducible component $C_i$  of $C$, the restriction $E|_{C_i}$ is stable and $ 0 \leq \deg(E|_{C_i})\leq r$.
\end{enumerate}
Then 
$$d>0,\qquad k p_a(C) \leq r(p_a(C)-1)+d$$ 
and $E$ is obtained as a BGN extension of a locally free sheaf of rank $r-k$.
\end{theorem*}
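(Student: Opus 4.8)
The plan is to realise $E$ as the middle term of a short exact sequence
$0 \to \cO_C^{k}\to E\to F\to 0$ in which $F$ is locally free of multirank $(r-k)\cdot\underline{1}$, to check en route that this presentation is a BGN extension in the sense of \cite{BFBGN}, and to read off the numerical inequality from the dual sequence. I would choose $k$ independent global sections and form the evaluation $\cO_C^{k}\to E$ (the BGN condition below will force $h^0(E)=k$, so these are in fact all of them).

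\textbf{First}, I would dispose of the positivity $d>0$. A nonzero section gives a nonzero morphism $\cO_C\to E$ whose image has $\underline{w}$-degree $\ge 0$; since $r\ge 2$ its saturation is a proper subsheaf of positive rank, so $\underline{w}$-stability forces $\mu_{\underline{w}}(E)>0$, that is $d>0$ (the reducible subtleties being controlled by the goodness of $\w$).

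\textbf{Second}, and this is the step I expect to be the main obstacle, I would show that $\cO_C^{k}\to E$ is a subbundle inclusion, i.e. that the cokernel $F$ is torsion-free and in fact locally free of rank $r-k$ on each component (in particular $k<r$, since $F$ has positive rank and $\deg_{\underline{w}}(F)=d>0$), and that $F$ is $\underline{w}$-semistable. This is exactly where the smooth-curve reasoning fails: on a reducible nodal curve the chosen sections may degenerate at a node and create torsion in the cokernel, and $\underline{w}$-semistability of $E$ need not descend to the irreducible components. Under hypothesis (a) I would argue globally, using $\mu_{\underline{w}}(E)\le 1$ together with the goodness of $\w$ both to exclude such torsion at the nodes and to bound the $\underline{w}$-slopes of subsheaves of $F$; under hypothesis (b) I would instead restrict to each $C_i$, apply the classical small-slope argument of \cite{BGN97} to the stable bundle $E|_{C_i}$ of degree $\le r$, and then glue, the good polarization ensuring that the component-wise subbundle structures are compatible at the nodes. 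Either way one obtains a locally free $\underline{w}$-semistable $F$ with $\mu_{\underline{w}}(F)=d/(r-k)\ge\mu_{\underline{w}}(E)>0$.

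\textbf{Third}, I would verify the BGN condition: the connecting map $H^0(F)\to H^1(\cO_C^{k})$ is injective (equivalently $H^0(E)\to H^0(F)$ vanishes, so $h^0(E)=k$ and $\cO_C^{k}$ is precisely the subsheaf generated by the sections), which follows from $\underline{w}$-semistability of $E$ and the slope comparison just made. \textbf{Finally}, the inequality comes out of the dual sequence $0\to F^{\vee}\to E^{\vee}\to\cO_C^{k}\to 0$. As $F$ is $\underline{w}$-semistable with $\mu_{\underline{w}}(F)>0$, its dual has negative $\underline{w}$-slope, whence $H^0(F^{\vee})=0$; thus the connecting map $\partial\colon H^0(\cO_C^{k})=\C^{k}\to H^1(F^{\vee})$ would fail to be injective only if $E$ admitted a nonzero map to $\cO_C$, i.e. a quotient of $\underline{w}$-slope $\le 0<\mu_{\underline{w}}(E)$, contradicting $\underline{w}$-stability. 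Hence $\partial$ is injective, so $k\le h^1(F^{\vee})$, and computing $h^1(F^{\vee})=d+(r-k)(p_a(C)-1)$ by Riemann--Roch and Serre duality on $(C,\w)$ and rearranging yields exactly $k\,p_a(C)\le r(p_a(C)-1)+d$.
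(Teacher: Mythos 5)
Your outline reaches the right statements, and your closing step (injectivity of the connecting map $\C^k\to H^1(F^{\vee})$ via $\Hom(E,\cO_C)=0$, which is exactly the linear-independence condition defining a BGN extension) is a correct and self-contained alternative to the paper's appeal to coherent systems (Proposition \ref{PROP:COH} together with the results of \cite{BFBGN}). But the step you yourself flag as ``the main obstacle'' --- that $\ev_V\colon V\otimes\cO_C\to E$ is an injective map of \emph{vector bundles}, so that $F$ is locally free --- is precisely the step you do not prove, and the strategies you sketch would not close it. The actual argument has two parts, both absent from your proposal. First, a dichotomy on each component: for a nonzero $s\in V$, the restriction $s_i=s|_{C_i}$ is either identically zero or nowhere vanishing. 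Indeed, otherwise the image $\cL$ of $\ev_s$ would have some restriction $\cL_i$ of degree $\geq 1$; in case (a), goodness of $\w$ gives $\wdeg(\cL)\geq\sum_i\deg(\cL_i)\geq 1$ while $\wrank(\cL)\leq 1$, so $\mu_{\w}(\cL)\geq 1\geq\mu_{\w}(E)$, contradicting $\w$-stability; in case (b) one contradicts stability of $E_i$ directly. Second --- and this is the crux --- one must exclude the possibility that $s$ vanishes identically on some (but not all) components. Your case-(b) plan of ``apply \cite{BGN97} on each $C_i$ and glue'' says nothing about a component where $s_i\equiv 0$, and your case-(a) plan never articulates a mechanism. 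The paper's mechanism is the only place where local freeness of $E$ enters: if $s_j\equiv 0$ on $C_j$ and $s_i\not\equiv 0$ on an adjacent component $C_i$ meeting $C_j$ at a node $p$, then, because $E$ is locally free and $s$ is a global section, $s_i(p)=s_j(p)=0$, contradicting the nowhere-vanishing of $s_i$ established in the first part.

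This second point is not a technicality: Example \ref{EX:noBGN} exhibits a $\w$-stable depth one sheaf of small slope, with a global section vanishing identically on a whole component, which is \emph{not} a BGN extension; the only hypothesis of the theorem it violates is local freeness. Any correct proof must therefore use local freeness of $E$ in an essential way at exactly this point, and your proposal never does (your appeal to ``the good polarization ensuring compatibility at the nodes'' is not such an argument --- goodness plays no role in this step). A minor further correction: in your third step, injectivity of $H^0(F)\to H^1(\cO_C^{k})$ is equivalent to $h^0(E)=k$, not to the BGN condition; the BGN condition is the injectivity of the dual connecting map $\C^{k}\to H^1(F^{\vee})$, which you do prove correctly at the end, so this slip does not affect your final inequality.
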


Let $E$ be a locally free sheaf on $(C,\w)$ of rank $r$ and  degree $d$. We say that $E$ has {\it small slope} if either $0\leq d\leq r$ or if we have $0 \leq \deg(E|_{C_i})\leq r$ for any irreducible component $C_i$ of $C$. Then, the above theorem can be seen in the framework of Brill-Noether theory for locally free sheaves of small slope.
\medskip

In the second part of this paper, under numerical  assumptions on $d$, $r$, $k$,  we give  a method  to construct irreducible components of Brill-Noether loci for sheaves of small slope, using BGN extensions.
In order to do so, we study $\w$-stable BGN extensions defined by 
irreducible components of  moduli spaces of $\w$-stable sheaves of small slope. The details are rather technical: we refer to Proposition  
\ref{PROP:Estab} and Theorem \ref{THM:Main}.
In Section \ref{SEC:SMALLSLOPE} we give sufficient conditions for the existence of components of the moduli spaces of depth one sheaves with small slope. These conditions are stated in Proposition \ref{PROP:TEC}. Then, using the above technical results, we prove our second main theorem: 

\begin{theorem*}(Theorem \ref{THM:MAIN-B})
Let $C$ be either a chain-like or comb-like curve with $\gamma$ irreducible components of genus $g_i \geq 2$.
Let  $d,s,k \in \mathbb N$  such that $$k \leq  1 + s(g_i -1)\quad \mbox{ for all }\quad  i=1,\dots, \gamma,\qquad s\geq 2(\gamma-1)\quad \mbox{ and }\quad \gamma\leq d\leq s.$$
Then, the Brill-Noether locus $\Bw((s+k)\cdot \underline{1},d,k)$ is non-empty whenever $\w$ lies in a suitable open neighborhood of the canonical polarization. Moreover, it has an irreducible component of dimension $\beta_C(s+k,d,k)$.
\end{theorem*}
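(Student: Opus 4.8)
The plan is to realize every member of $\Bw((s+k)\cdot\underline{1},d,k)$ as a $\w$-stable BGN extension
\[
0 \to \OO_C^{k} \to E \to F \to 0,
\]
where $F$ is a $\w$-stable depth one sheaf of multirank $s\cdot\underline{1}$ and $\w$-degree $d$ drawn from a prescribed irreducible component of $\Uw^s(s\cdot\underline{1},d)$, and then to read off non-emptiness, irreducibility and dimension from the technical results already in place. Since Proposition \ref{PROP:TEC}, Proposition \ref{PROP:Estab} and Theorem \ref{THM:Main} are available, the argument reduces to verifying that the numerical hypotheses on $\gamma,d,s,k$ make all three applicable; the remaining work is bookkeeping with Riemann--Roch on $C$, using that for a chain-like or comb-like curve (whose dual graph is a tree with $\gamma-1$ nodes) one has $p_a(C)=\sum_{i=1}^{\gamma} g_i$, so that $g_i\ge 2$ forces $p_a(C)\ge 2\gamma$ and in particular $p_a(C)-1\ge 1$.

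First I would invoke Proposition \ref{PROP:TEC} to produce the component of quotients. The bounds $\gamma\le d\le s$ allow the total degree $d$ to be distributed among the $\gamma$ components with each restriction $F|_{C_i}$ of positive degree $d_i\ge 1$, while $s\ge 2(\gamma-1)$ is what lets one glue stable bundles of rank $s$ across the nodes; together they yield a non-empty irreducible component $\cG\subset\Uw^s(s\cdot\underline{1},d)$ whose general member is locally free with stable restrictions, for $\w$ in a small neighborhood of the canonical polarization. The small-slope bound $d\le s$ places $F$, and hence $E$, in the range of Theorem \ref{THM:genTHMb}. Moreover, since $p_a(C)-1\ge 1$ and $d\le s$, the Euler characteristic $\chi(F)=d-s(p_a(C)-1)\le d-s\le 0$, so the general $F\in\cG$ satisfies $h^0(F)=0$; this is exactly what will force $h^0(E)=k$ along the BGN locus, because $C$ is connected and the sequence above gives $\C^k=H^0(\OO_C^{k})\cong H^0(E)$.

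Next I would secure the cohomological input for the extension step. For general $F\in\cG$, Serre duality on the Gorenstein curve $C$ gives $\Ext^1(F,\OO_C)\cong H^0(F\otimes\omega_C)^\vee$, and since $F$ is stable of positive slope one has $h^0(F^\vee)=0$, whence $\dim\Ext^1(F,\OO_C)=\chi(F\otimes\omega_C)=d+s(p_a(C)-1)$; the analogous count on a component reads $\dim\Ext^1(F|_{C_i},\OO_{C_i})=d_i+s(g_i-1)$. With each $d_i\ge 1$, the hypothesis $k\le 1+s(g_i-1)$ gives $k\le d_i+s(g_i-1)$ on every $C_i$, which is precisely what makes the restriction of the extension to each component a genuine BGN extension with stable quotient. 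Proposition \ref{PROP:Estab} then applies: a general $k$-dimensional subspace $W\subset\Ext^1(F,\OO_C)$ produces a $\w$-stable sheaf $E$ with $h^0(E)=k$, so $[E]\in\Bw((s+k)\cdot\underline{1},d,k)$ and the locus is non-empty for $\w$ near the canonical polarization.

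Finally I would apply Theorem \ref{THM:Main} to the component $\cG$: the sheaves $E$ so obtained sweep out an irreducible component of $\Bw((s+k)\cdot\underline{1},d,k)$, the classifying map $(F,W)\mapsto[E]$ being generically injective because both $F=\coker(H^0(E)\otimes\OO_C\to E)$ and $W$ are recovered from the evaluation of global sections. Its dimension is then
\[
\dim\cG+\dim\Gr\bigl(k,\Ext^1(F,\OO_C)\bigr)=\bigl[s^2(p_a(C)-1)+1\bigr]+k\bigl(d+s(p_a(C)-1)-k\bigr),
\]
which a routine expansion identifies with $\beta_C(s+k,d,k)$. The hard part will be the stability of $E$ and the maximality of this family as a component: both are genuinely delicate on a reducible curve, since stability of a depth one sheaf is sensitive to how the multidegree is distributed at the nodes, and this is exactly why the construction of Proposition \ref{PROP:Estab} — and therefore the statement — is only guaranteed for $\w$ in an open neighborhood of the canonical (good) polarization.
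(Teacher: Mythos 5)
Your overall strategy --- realizing elements of $\Bw((s+k)\cdot\underline{1},d,k)$ as $\w$-stable BGN extensions over a small-slopes component of $\Uw(s\cdot\underline{1},d)$ and then quoting Proposition \ref{PROP:Estab} and Theorem \ref{THM:Main} --- is exactly the paper's, and your dimension count at the end is correct. The gap is in your very first step: you invoke Proposition \ref{PROP:TEC} to produce the small-slopes component, but the hypotheses of Proposition \ref{PROP:TEC} are strictly stronger than those of the statement. Case (a) covers only $\gamma\le d\le s/2+1$; for $s/2+1<d\le s$, cases (b) and (c) demand extra conditions on the canonical weights $\eta_i$ --- either $\eta_id\ge s/2$ for some $i$, or the two-sided bound \eqref{EQ:EtaiD} for all but at most one $i$ --- and these can fail for curves satisfying $s\ge 2(\gamma-1)$ and $\gamma\le d\le s$. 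Concretely, take a chain-like curve with $\gamma=4$ and genera $(10,10,2,2)$, so that $(\eta_1,\dots,\eta_4)=(19,20,4,3)/46$, and take $s=d=6$: no $\eta_i$ reaches $1/2$, so (b) fails, and the requirement $1<6\eta_i<2$ of (c) fails for all four indices, yet the theorem's hypotheses hold. This is precisely why the paper proves two further results, Propositions \ref{PROP:Chain} and \ref{PROP:Comb}, which exploit the chain/comb geometry: for chains one constructs the multidegree $(d_1,\dots,d_\gamma)$ recursively, solving at each step an integer system $(\diamondsuit)_j$ compatible with the stability conditions $(\star)_j$ (this is where $s\ge 2(\gamma-1)$ is actually used --- it guarantees the existence of integer solutions, not any ``gluing of stable bundles across nodes''); for combs one takes $d_j=1$ on the teeth unless $d\eta_j\ge s/2+1$ for some $j$, in which case one falls back on Proposition \ref{PROP:TEC}(b). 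Without these two propositions, or an equivalent explicit construction, your argument does not cover the stated range of $d$.

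Two smaller points. First, your claim that $\chi(F)\le 0$ forces $h^0(F)=0$ for general $F$ is not by itself an argument on a reducible nodal curve; the paper deduces $h^0(F)=0$ from $h^0(F_i)=0$ for the general stable restrictions with $0<d_i\le s$ (using Theorem A of \cite{BGN97} on each smooth component) together with the inclusion $F\hookrightarrow\bigoplus_iF_i$. Second, Theorem \ref{THM:genTHMb} plays no role here: it goes in the opposite direction (every locally free element of the locus arises as a BGN extension) and is not needed for non-emptiness or for the dimension statement.
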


We stress that Theorem \ref{THM:MAIN-B} is stated in Section \ref{SEC:SMALLSLOPE} for a wider class of curves (more precisely, for curves satisfying one of the conditions in Proposition \ref{PROP:TEC}). We reported it here in this form for brevity.

The above results give a partial generalization of Theorems $A+B$. Nevertheless, we conjecture that the Brill-Noether locus $\Bw((s+k)\cdot \underline{1},d,k)$ is non-empty whenever 
$$2(\gamma-1)\leq s,\quad  \gamma\leq d\leq s \qquad \mbox{ and }\qquad k g_i \leq 1+s(g_i-1), \forall i = 1,\dots \gamma,$$
for any nodal curve $C$ of compact type and $\w$ in a suitable neighborhood of the canonical polarization   (see Conjecture \ref{CONJ:STABJ}).


\section{Notations and preliminary results}
\subsection{Nodal curves}
\label{SUBSEC:nodalcurves}
In this paper we will deal with connected nodal reducible curves over the complex field. A comprehensive reference for general theory on nodal curves are \cite{Cap} and \cite[Ch X]{ACG}. If $C$ is as above, we will denote by $\gamma$ the number of its irreducible components and by $\delta$ the number of its nodes. We will assume that each irreducible component $C_i$ is a smooth curve of genus $g_i \geq 2$.  


The {\it arithmetic  genus} of $C$ is   
\begin{equation}
\label{EQ:paC}
p_a(C)=\sum_{i=1}^{\gamma}g_i+\delta-\gamma+1.
\end{equation}
\noindent For any subcurve $B$ of $C$, let $B^c$ be the closure of $C\setminus B$. We set $\Delta_i=C_i\cap C_i^c$, and we denote by  $\delta_i$ its  degree, i.e. the number of nodes of $C$ on $C_i$. 
We recall that there exists on $C$ a dualizing sheaf $\omega_C$  which is an invertible sheaf, moreover for  any $i= 1 \dots \gamma$, we have  $\omega_C|_{C_i}=\omega_{C_i}(\Delta_i)$. Since $C$ is a nodal curve without rational or elliptic components, we have that $C$ is a {\it stable} curve. In particular, $\omega_C$ is an ample line bundle. 
The curve $C$ is said {\it of compact type} if its {\it dual graph} is a tree. In this case  we have $\delta = \gamma -1$ and $p_a(C)= \sum_{i=1}^{\gamma}g_i$. \vspace{2mm}

The following lemma gives a technical result useful for the sequel. It is a small improvement of \cite[Lemma 1]{Tei91}.

\begin{lemma}
\label{LEM:ORDER}
Let $C$ be a nodal curve of compact type with $\gamma$ irreducible components. Fix an irreducible component $D$ of $C$, it is possible to order the components of $C$ and to give a family of subcurves $\{A_j\}_{j=1,\dots, \gamma -1}$ of $C$ such that:
\begin{enumerate}[(a)]
\item{} $C_\gamma$ is the chosen component, i.e. $C_{\gamma}=D$;
\item{} for any $i=1,\dots,\gamma-1$ the curve $C_{i+1} \cup \dots  \cup C_{\gamma}$ is connected;
\item{}  for any $i=1,\dots, \gamma-1$, $C_i \subseteq A_i$, $A_i$ and $A_i^c$ are connected.
\end{enumerate}
In particular, this implies that $A_i \cap A_i^c$ is a node: we denote it by $p_i$.
\end{lemma}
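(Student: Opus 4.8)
The plan is to translate everything into the combinatorics of the dual graph. Since $C$ is of compact type, its dual graph $T$ is a tree with $\gamma$ vertices (one for each component $C_i$) and exactly $\delta=\gamma-1$ edges, each edge corresponding to one node of $C$; moreover, for any two subcurves $B,B'$ with $B\cup B'=C$, the intersection $B\cap B'$ consists precisely of the nodes attached to the edges of $T$ joining a vertex of $B$ to a vertex of $B'$. I would root $T$ at the vertex corresponding to the chosen component $D$ and consider the ancestor partial order $\preceq$, in which $v\preceq w$ whenever $w$ lies on the unique path from $v$ to the root. The whole statement then reduces to standard facts about rooted trees.

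First I would fix the ordering. Choosing any linear extension of $\preceq$ in which ancestors receive larger indices — equivalently, peeling off one leaf at a time, assigning it the next label, until only the root remains — produces a labelling $C_1,\dots,C_\gamma$ with $C_\gamma=D$, which gives (a). (Such a peeling always exists, since in any finite tree one can repeatedly remove a leaf distinct from the fixed root.) For (b), observe that the set $\{C_{i+1},\dots,C_\gamma\}$ consists of the $\gamma-i$ largest labels and is therefore closed under passing to the parent and contains the root; hence each of its vertices is joined to $D$ by a path lying entirely inside the set, so the corresponding subcurve $C_{i+1}\cup\dots\cup C_\gamma$ is connected.

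Next I would produce the subcurves from the same rooted structure. For each $i<\gamma$ let $e_i$ be the edge of $T$ joining $C_i$ to its parent, and let $A_i$ be the subcurve whose components are $C_i$ together with all of its descendants, i.e. the connected component of $T\setminus\{e_i\}$ containing $C_i$. Removing a single edge from a tree leaves exactly two connected pieces, so both $A_i$ and the complementary subcurve $A_i^c$ (the piece containing the root $D=C_\gamma$) are connected, and $C_i\subseteq A_i$ by construction; this is (c). Finally, since $e_i$ is the unique edge of $T$ running between $A_i$ and $A_i^c$, the intersection $A_i\cap A_i^c$ consists of exactly the single node $p_i$ attached to $e_i$, which is the asserted conclusion.

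The content is entirely elementary once the dictionary between subcurves of a compact-type curve and subtrees of its dual tree is in place; the only point requiring genuine care — and the improvement over \cite[Lemma 1]{Tei91} — is to extract the ordering and the family $\{A_i\}$ from one and the same rooted tree, and to verify that each $A_i\cap A_i^c$ reduces to a single node. This last step is precisely where the compact-type hypothesis is used: in a tree every edge is a cut-edge, separating off exactly one node of $C$.
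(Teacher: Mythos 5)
Your proof is correct, but it takes a genuinely different route from the paper's. The paper argues by induction on $\gamma$: it removes the chosen component $C_\gamma=D$, decomposes $C_\gamma^c$ into connected components $\Gamma^{(1)},\dots,\Gamma^{(m)}$, applies the induction hypothesis to each of them (placing the component meeting $C_\gamma$ last), concatenates the resulting orderings, and then verifies (b) through a two-case analysis; the subcurves are produced afterwards from (b), by setting $A_i=D_i^c$ where $D_i$ is the connected component of $C_i^c$ containing $C_{i+1}\cup\dots\cup C_\gamma$. You instead work directly in the rooted dual tree: the ordering is an arbitrary linear extension of the ancestor order (obtained by leaf-peeling), and $A_i$ is the descendant subtree of $C_i$, i.e.\ the component of the tree minus the parent edge $e_i$ that contains $C_i$. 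The two constructions actually yield the same family $\{A_i\}$ (the paper's $D_i$ is precisely the root side of $e_i$), so the difference is in the verification, and your method buys two things: it eliminates the case analysis in (b) — the set $\{C_{i+1},\dots,C_\gamma\}$ is parent-closed and contains the root, hence connected — and it shows that \emph{any} linear extension of the ancestor order works, which is strictly more general than the paper's recursive ordering (where each $\Gamma^{(k)}$ occupies a contiguous block of indices) and matches the freedom in choosing alternative orderings exploited in Examples \ref{EX:chain} and \ref{EX:comb}. One small imprecision: your dictionary statement that $B\cap B'$ consists of the nodes on edges joining $B$ to $B'$ is valid only when $B$ and $B'$ share no irreducible component; this is harmless here, since you only apply it to the complementary pair $A_i$, $A_i^c$.
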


\begin{proof}
We proceed by induction on $\gamma$. If $\gamma=2$ the result is straightforward. We assume by induction hypothesis that the result holds for any curve of compact type with at most $\gamma-1$ irreducible components. Fix a component of $C$ and denote it by $C_\gamma$. Let $m:=C_{\gamma}\cdot C_{\gamma}^c$, i.e. $m$ is the number of nodes on $C_\gamma$. Then $C_{\gamma}^c$ has $m$ connected components which will be denoted by $\Gamma^{(1)},\dots,\Gamma^{(m)}$.  The ordering of these components is arbitrary.
By construction $\Gamma^{(k)}$ is a curve of compact type with less than $\gamma$ components. Since $C$ is of compact type, for any  $k=1,\dots, m$ we have that $\Gamma^{(k)}\cap C_{\gamma}$ is a single point. Then, there exists a unique $B_k$, irreducible component of $\Gamma^{(k)}$, such that $B_k\cap C_{\gamma}$ is not empty. By induction hypothesis we have an order of the components of $\Gamma^{(k)}$ whose "final" component is $B_k$ satisfying $(b)$.
The ordering on each $\Gamma^{(k)}$ induce a natural ordered sequence of all the components of $C$ whose last element is $C_{\gamma}$. 
\vspace{2mm}

We now check that this ordering satisfies $(b)$. For any $i\leq \gamma-1$, $C_i$ is contained in a unique $\Gamma^{(k_i)}$. If $C_i=B_{k_i}$ then $C_{i+1}\cup\cdots \cup C_{\gamma}=\bigcup_{k=k_i+1}^{m} \Gamma^{(k)}\cup C_{\gamma}$, so it is connected. If $C_i\neq B_{k_i}$ then, since $B_{k_i}=C_{l}$ for a unique index $l>i$, we have that $\bigcup_{j=i+1}^{l}C_j=C_{i+1}\cup \cdots \cup C_{l}$ is a connected subcurve of $\Gamma^{k_i}$ by induction hypothesis and meets $C_{\gamma}$ in a point.
Then $$\bigcup_{j=i+1}^{\gamma}C_j = \left(\bigcup_{j=i+1}^{l}C_j\right)\cup \bigcup_{k=k_i+1}^{m} \Gamma^{(k)}\cup C_{\gamma}$$ is connected.
\vspace{2mm}

$(c)$ follows from $(b)$. In fact,  for any  $i=1,\dots, \gamma-1$, let $D_i$ be the connected component of $C_i^{c}$ containing $C_{i+1}\cup \cdots \cup C_{\gamma}$. We define $A_i$ to be $D_i^c$. By construction, it contains $C_i$ and all the possible other connected components of $C_i^c$ different from $D_i$. Hence, $A_i$ is a connected curve and $C_j\subseteq A_i$ implies $j\leq i$. In particular, if $C_i\subset \Gamma^{(k)}$ then $C_{\gamma}\cup\bigcup_{j\neq k} \Gamma^{j}\subseteq A_i^c$ so $A_i\subseteq \Gamma^{(k)}$.
\end{proof}

\begin{example}[Chain-like curves]
\label{EX:chain}
A {\it "chain-like"} curve is a curve of compact type with $\gamma \geq 2$ smooth irreducible components which can be ordered as $\{C_1, \dots, C_{\gamma}\}$ with $C_i\cap C_{i+1}=\{p_i\}$ and $C_i\cap C_j=\emptyset$ whenever $|i-j|>1$. This ordering satisfies  conditions $(b)$ and $(c)$ of Lemma \ref{LEM:ORDER} and it is obtained by choosing as $C_{\gamma}$ one of the two components of $C$  having a single node.  It is a "natural" ordering for chain-like curves as it 
gives $A_j=\bigcup_{i=1}^{j}C_i$ for $j=1,\dots, \gamma-1$. On the other  hand, for any $i=1,\dots, \gamma-1$, one can also chose and alternative ordering $\{\tilde{C}_1,\dots, \tilde{C}_{\gamma}\}$ with $\tilde{C}_{\gamma}=C_i$. If $i=1$ we are simply reversing the ordering of the curves. If $i>1$, we have that $C_i^c$ has two irreducible components
$\Gamma^{(1)}=\bigcup_{j=1}^{i-1}C_i$ and $\Gamma^{(2)}=\bigcup_{j=i+1}^{\gamma}C_i$. Using the notation introduced in the proof of the Lemma, we have $B_1=C_{i-1}$ and $B_2=C_{i+1}$ so we have $$\{\tilde{C}_1,\dots, \tilde{C}_{\gamma}\}=\{C_1,\dots, C_{i-1},C_{\gamma},C_{\gamma-1},\dots,C_{i+1},C_{i}\}.$$
\end{example}

\begin{example}[Comb-like curves]
\label{EX:comb}
A {\it "comb-like curve"} with $\gamma\geq 2$ smooth irreducible components is a curve of compact type where all the nodes lies on a single component (the "grip" of the curve), i.e. with a 
component with $\gamma-1$ nodes. Its components can be ordered as $\{C_1, \dots, C_{\gamma}\}$ with $C_\gamma \cdot C_i=\{p_i\}$ for $i=1,\dots, \gamma-1$ and $C_i\cap C_j=\emptyset$ whenever $i\neq j$ and $i,j\leq \gamma-1$. This ordering satisfies  conditions (b) and (c) of  Lemma \ref{LEM:ORDER} and we have $A_i=C_{i}$ for all $i=1,\dots, \gamma-1$. Any permutation of the indices $\{1,\dots, \gamma-1\}$ gives an analogous result. Starting from the above ordering, one can also chose the ordering $\{C_2,C_3,\dots,C_{\gamma-1},C_{\gamma},C_1\}$ which yields $A_{i}=C_{i+1}$ for $i\leq \gamma-2$ and $A_{\gamma-1}=\bigcup_{j=2}^{\gamma}C_i$.
\end{example}

Finally, we recall some  general technical  results.
Let $p$ be a node and denote by $C_{i_1}$ and $C_{i_2}$ the two components such that $p\in C_{i_1}\cap C_{i_2}$. Following the notations of \cite{Ses}, chap. 8,  we set:
$$\OO_{x_{i_k}}= \OO_{C_{i_k},p}, \quad m_{x_{i_k}} = m_{C_{i_k},p}, \quad \OO_p= \OO_{C,p} \quad m_{p} = m_{C,p}.$$
Then:
$$\OO_p = \{ (f,g) \in \OO_{x_{i_1}} \oplus \OO_{x_{i_2}} \,\vert\,  f(p) = g(p) \}, \quad m_p = m_{x_{i_1}}
\oplus m_{x_{i_2}}.$$
The  
 isomorphisms $\OO_{x_{i_k}} \simeq m_{x_{i_k}}$ 
obtained by sending $f \mapsto ft_{i_k}$, where $t_{i_k}$ is a local coordinate on $C_{i_k}$ at $p$, induce an isomorphism 
$\OO_{x_{i_1}} \oplus \OO_{x_{i_2}} \simeq m_p$. 
We have the  following exact sequences of $\OO_p$-moduli:
\begin{equation}
\label{eq1}
0 \to \OO_p \to \OO_{x_{i_1}} \oplus \OO_{x_{i_2}} \to {\mathbb C} \to 0\qquad \mbox{ and }\qquad 0 \to m_p \to \OO_p \to {\mathbb C} \to 0.
\end{equation}

The above exact sequences and standard facts on modules yield the following lemma giving $Ext$-groups for $\cO_p$-modules of depth one where $p$ is as above. Some of these groups has been computed in \cite[Lemma 2.1]{BFBGN}.

\begin{lemma} 
\label{homloc}
Let $N$ be a $\cO_p$-module. Then, the following facts hold:
$$
\Ext^1(\cO_p,N)=\Ext^1(\cO_{x_i},\cO_p)=\Ext^1(\cO_{x_i},\cO_{x_i})=0\quad \mbox{ and }\quad \Ext^1(\cO_{x_i},\cO_{x_j})=\bC  \quad\mbox{ for } i\neq j.
$$

If $M \simeq \cO_p^s \oplus \cO_{x_1}^{a_1} \oplus \cO_{x_2}^{a_2}$ and $N \simeq \cO_p^{s'} \oplus \OO_{x_1}^{b_1} \oplus \cO_{x_2}^{b_2}$, we have $\Ext^1_{\cO_p}(M,N) \simeq \bC^{a_1b_2}\oplus \bC^{a_2b_1}$. In particular, if either $M$ or $N$ is free we have $\Ext^1(M,N)=0$.
\end{lemma}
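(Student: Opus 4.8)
The plan is to reduce everything to the $\operatorname{Ext}^1$-groups between the three building blocks $\cO_p,\cO_{x_1},\cO_{x_2}$ (the indecomposable depth one $\cO_p$-modules, which are exactly the summands appearing in $M$ and $N$), and then to assemble the general formula by biadditivity of $\operatorname{Ext}^1$. Since $\cO_p$ is free over itself it is projective, so $\operatorname{Ext}^1(\cO_p,N)=0$ for every $N$; this settles the first vanishing and, combined with additivity, will also yield the final ``free'' consequence. It therefore remains to compute $\operatorname{Ext}^1(\cO_{x_i},N)$ for $N\in\{\cO_p,\cO_{x_1},\cO_{x_2}\}$.

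The main device is the short exact sequence
$$0\to \cO_{x_2}\to \cO_p\to \cO_{x_1}\to 0,$$
obtained from the restriction $\cO_p\to\cO_{x_1}$, $(f,g)\mapsto f$, which is surjective with kernel $0\oplus m_{x_2}\cong\cO_{x_2}$ (using the identification $m_{x_i}\cong\cO_{x_i}$ recalled before \eqref{eq1}); the symmetric sequence with the two branches swapped holds as well. Writing $x=(t_1,0)$ and $y=(0,t_2)$ for the two obvious generators of $m_p$, so that $\cO_{x_1}=\cO_p/(y)$, $\cO_{x_2}=\cO_p/(x)$ and $xy=0$, I would apply $\operatorname{Hom}_{\cO_p}(-,N)$ to this sequence. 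Using $\operatorname{Hom}(\cO_p,N)=N$, $\operatorname{Hom}(\cO_{x_2},N)=\operatorname{Ann}_N(x)$ and $\operatorname{Ext}^1(\cO_p,N)=0$, the long exact sequence collapses to
$$\operatorname{Ext}^1(\cO_{x_1},N)\cong \operatorname{Ann}_N(x)\big/ yN,$$
the relevant map being $n\mapsto yn$; symmetrically $\operatorname{Ext}^1(\cO_{x_2},N)\cong\operatorname{Ann}_N(y)/xN$. (Equivalently, one may splice the two short exact sequences into the $2$-periodic free resolution $\cdots\xrightarrow{\cdot x}\cO_p\xrightarrow{\cdot y}\cO_p\to\cO_{x_1}\to 0$ coming from the matrix factorization of the node, and read off the same quotients.)

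Evaluating this formula at the three choices of $N$ is then routine. For $N=\cO_p$ one has $\operatorname{Ann}_{\cO_p}(x)=(y)=y\cO_p$, so $\operatorname{Ext}^1(\cO_{x_i},\cO_p)=0$; for $N=\cO_{x_1}$ the element $x$ acts injectively, so $\operatorname{Ann}=0$ and $\operatorname{Ext}^1(\cO_{x_1},\cO_{x_1})=0$; and for $N=\cO_{x_2}$ the element $x$ acts as $0$, so $\operatorname{Ann}_N(x)=\cO_{x_2}$ while $yN=m_{x_2}$, giving $\operatorname{Ext}^1(\cO_{x_1},\cO_{x_2})\cong\cO_{x_2}/m_{x_2}\cong\bC$, and symmetrically $\operatorname{Ext}^1(\cO_{x_2},\cO_{x_1})\cong\bC$.

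Finally, for general $M\cong\cO_p^s\oplus\cO_{x_1}^{a_1}\oplus\cO_{x_2}^{a_2}$ and $N\cong\cO_p^{s'}\oplus\cO_{x_1}^{b_1}\oplus\cO_{x_2}^{b_2}$, additivity of $\operatorname{Ext}^1$ in both arguments expresses $\operatorname{Ext}^1(M,N)$ as the direct sum, over all pairs of summand types, of copies of the groups just computed; every contribution vanishes except those from the pairs $(\cO_{x_1},\cO_{x_2})$ and $(\cO_{x_2},\cO_{x_1})$, counted with multiplicities $a_1b_2$ and $a_2b_1$, yielding $\operatorname{Ext}^1_{\cO_p}(M,N)\cong\bC^{a_1b_2}\oplus\bC^{a_2b_1}$. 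If $M$ is free then $a_1=a_2=0$, and if $N$ is free then $b_1=b_2=0$, so in either case $\operatorname{Ext}^1(M,N)=0$. The one point to treat with care — rather than a genuine obstacle — is that $\cO_p$ is not regular, so the $\cO_{x_i}$ have infinite projective dimension and no Ext-vanishing can be expected for free; the short exact sequence above is precisely what makes the degree-one computation finite and transparent.
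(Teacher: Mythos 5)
Your proof is correct, but it is not the paper's route --- in fact the paper gives no written proof at all: it asserts that the lemma follows from the exact sequences \eqref{eq1} together with ``standard facts on modules'', citing \cite[Lemma 2.1]{BFBGN} for some of the groups. The hinted argument would chain long exact sequences through $\Ext^{\bullet}(\bC,N)$ using $0\to\cO_p\to\cO_{x_1}\oplus\cO_{x_2}\to\bC\to0$ and $0\to m_p\to\cO_p\to\bC\to0$ with $m_p\simeq\cO_{x_1}\oplus\cO_{x_2}$. You instead use the branch sequence $0\to\cO_{x_2}\to\cO_p\to\cO_{x_1}\to0$ --- equivalently the $2$-periodic free resolution coming from $xy=0$, $\operatorname{Ann}_{\cO_p}(x)=(y)$, $\operatorname{Ann}_{\cO_p}(y)=(x)$ --- to obtain the uniform formula $\Ext^1(\cO_{x_1},N)\simeq\operatorname{Ann}_N(x)/yN$, after which every assertion about the blocks $\cO_p$, $\cO_{x_1}$, $\cO_{x_2}$ is a one-line evaluation and the general case follows by biadditivity. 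Your intermediate identifications all check out: $(t_1,0)$ and $(0,t_2)$ do lie in $m_p$; the kernel of $(f,g)\mapsto f$ is $0\oplus m_{x_2}\simeq\cO_{x_2}$; the inclusion $\cO_p/(x)\hookrightarrow\cO_p$ is multiplication by $y$, so the connecting map is $n\mapsto yn$ as you claim; and $yN\subseteq\operatorname{Ann}_N(x)$, so the quotient is well formed. What your approach buys is a self-contained, uniform computation valid for every $\cO_p$-module $N$, with no need to resolve the skyscraper $\bC$; what the paper's approach buys is economy, reusing the sequences already displayed and computations already carried out in \cite{BFBGN}.
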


\subsection{Depth one sheaves on nodal curves and related moduli spaces}
\label{Subsec:depthone}
We recall the notion of depth one sheaves on nodal curves.  References for the contents of this subsection are \cite{Ses} and \cite{KN}.
\medskip

A coherent sheaf $E$ on a reduced curve  is said to be of {\it depth one} if  for any $x \in \Supp(E)$ the stalk $E_x$ is an $\OO_x$-module of depth one. 
Let $C$ be a nodal curve with smooth irreducible components $C_1,\dots,C_{\gamma}$. Using the notations introduced above, a coherent sheaf $E$ on $C$ is of depth one if  $E$ is locally free away from the nodes and the stalk of $E$ at a node $p \in C_{i_1}\cap C_{i_2}$ is isomorphic to $\OO_p^s \oplus \OO_{x_{i_1}}^{a_1} \oplus \OO_{x_{i_2}}^{a_2}$. In particular, vector bundles  are depth one sheaves and any subsheaf of a  depth one sheaf is of depth one too.
\hfill\par

Let $E$ be a depth one sheaf on  $C$. Its dual sheaf $E^* =  \cHom_{\OO_C}(E,\OO_C)$  is  of depth one too  and $E$ is reflexive, i.e. $\cHom_{\OO_C}(E^*,\OO_C) \simeq E$.   
In particular, we recall that Serre duality   yields  an isomorphism  $H^q(E)^* \simeq H^{1-q}(E^* \otimes {\omega}_C)$  for any $q \geq 0$.
\medskip



The following Lemma generalizes formula in \cite[Lemma 2.5]{Bho06} to the case of nodal reducible curves.

\begin{lemma}
\label{LEM:EXT1}
Let $E$  and $F$ be  depth one sheaves on $C$.
Assume that at the node $p_j\in C_{j,1}\cap C_{j,2}$ we have 
$$E_{p_j} \simeq \OO_p^{s_j} \oplus \OO_{x_{j,1}}^{a_{j,1}} \oplus \OO_{x_{j,2}}^{a_{j,2}}\qquad \mbox{ and }\qquad F_{p_j} \simeq \OO_p^{t_j} \oplus \OO_{x_{j,1}}^{b_{j,1}} \oplus \OO_{x_{j,2}}^{b_{j,2}},$$
then $$\dim \Ext^1(E,F) = h^1(\cHom(E,F)) + \sum_{j=1}^{\delta}(a_{1,j}b_{2,j}+a_{2,j}b_{1,j}).
$$
\end{lemma}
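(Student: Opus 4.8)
The plan is to compute $\Ext^1(E,F)$ by means of the local-to-global spectral sequence for $\Ext$ and to reduce the global answer to the purely local contributions already recorded in Lemma \ref{homloc}. For two coherent sheaves $E$ and $F$ on $C$ one has
$$E_2^{p,q}=H^p(\cExt^q(E,F)) \Rightarrow \Ext^{p+q}(E,F).$$
Since $C$ is a curve, $H^p(-)$ vanishes for $p\geq 2$; in particular $H^2(\cHom(E,F))=0$, so the low-degree exact sequence attached to the spectral sequence truncates to
$$0 \to H^1(\cHom(E,F)) \to \Ext^1(E,F) \to H^0(\cExt^1(E,F)) \to 0.$$
This already gives $\dim\Ext^1(E,F)=h^1(\cHom(E,F))+h^0(\cExt^1(E,F))$, and the whole task becomes the evaluation of the last term.

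Next I would analyze the sheaf $\cExt^1(E,F)$. For every $x\in C$ one has the standard identification $\cExt^1(E,F)_x\simeq \Ext^1_{\OO_x}(E_x,F_x)$ of coherent sheaves on a Noetherian scheme. Away from the nodes $E$ is locally free, so $E_x$ is a free $\OO_x$-module and this local group vanishes (the last assertion of Lemma \ref{homloc}). Hence $\cExt^1(E,F)$ is a skyscraper sheaf supported on the nodes $\{p_1,\dots,p_\delta\}$. At a node $p_j\in C_{j,1}\cap C_{j,2}$ the stalks have the prescribed decompositions, so Lemma \ref{homloc} yields
$$\dim\Ext^1_{\OO_{p_j}}(E_{p_j},F_{p_j})=a_{j,1}b_{j,2}+a_{j,2}b_{j,1},$$
the free summands $\OO_p^{s_j}$ and $\OO_p^{t_j}$ contributing nothing.

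Finally, since $\cExt^1(E,F)$ is a skyscraper sheaf, its higher cohomology vanishes and $h^0$ is just the sum of the dimensions of its stalks, so
$$h^0(\cExt^1(E,F))=\sum_{j=1}^{\delta}\dim\Ext^1_{\OO_{p_j}}(E_{p_j},F_{p_j})=\sum_{j=1}^{\delta}\left(a_{j,1}b_{j,2}+a_{j,2}b_{j,1}\right).$$
Substituting into the displayed formula for $\dim\Ext^1(E,F)$ gives the statement. I do not expect any serious obstacle here: the only points requiring care are the truncation of the spectral sequence (automatic from $\dim C=1$) and the identification of the stalk of $\cExt^1$ with the local $\Ext$-module, both of which are standard; the genuine content of the computation is the local Lemma \ref{homloc}, which is already available, together with the observation that $E$ being locally free off the nodes concentrates $\cExt^1$ there.
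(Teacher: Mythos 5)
Your proof is correct and follows essentially the same route as the paper: the local-to-global spectral sequence truncated by the vanishing of $H^2$ on a curve, followed by the identification of $\cExt^1(E,F)$ as a skyscraper sheaf at the nodes whose stalk dimensions are computed by Lemma \ref{homloc}. Your write-up is in fact slightly more detailed than the paper's on the stalk identification $\cExt^1(E,F)_{p_j}\simeq \Ext^1_{\OO_{p_j}}(E_{p_j},F_{p_j})$, but the argument is the same.
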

\begin{proof}
For all $q\geq 1$ we have that $\cExt^q(E,F)$ is a torsion sheaf, whose support in contained in the set of nodes, while $\cExt^0(E,F) = \cHom(E,F)$. In particular, the cohomology group $H^p(\cExt^q(E,F))$ vanish if either $p=1$ and $q\geq 1$ or $p\geq 2$ for all $q\geq 0$. Then, the local-to-global spectral sequence for $\Ext$ groups (see \cite{God73}) yields an exact sequence
$$0\to H^1(\cHom(E,F))\to \Ext^1(E,F)\to H^0(\cExt^1(E,F))\to H^2(\cHom(E,F))=0$$
so $\dim \Ext^1(E,F) = h^1(\cHom(E,F)) + h^0(\cExt^1(E,F))$. Since $$h^0(\cExt^1(E,F))=\bigoplus_{j=1}^{\delta}\dim(\Ext^1(E_{p_j},F_{p_j})),$$ 
one can concludes using Lemma \ref{homloc}. 
\end{proof}

In order to introduce moduli spaces for depth one sheaves on a reducible curve it is necessary to introduce the notion of polarization. A {\it polarization} on a nodal reducible curve $C$ is a vector  $\w= (w_1,\dots,w_{\gamma}) \in {\mathbb Q}^{\gamma}$ such that
\begin{equation}
0 < w_i < 1 \quad \sum_{i=1}^{\gamma}w_i = 1.
\end{equation}
We will say that the pair $(C,\w)$ is a {\it polarized nodal curve}. Any ample line bundle $L$ on $C$ induces a polarization $\w_L$ whose weight on the component $C_i$ is $\deg(L|_{C_i})/\deg(L)$. 
\vspace{2mm}

Let $(C,\w)$ be a polarized nodal curve. 
For any depth  one sheaf $E$ on $C$ we denote by $E_i$ its restriction to $C_i$ modulo torsion and  by $\urank(E)=\ur=(r_1,r_2,\dots,r_{\gamma})$ its {\it multirank}, where $r_i=\rank(E_i)$.  We define the {\it $\w$-rank} and the {\it $\w$-degree} of $E$ as: $$\wrank(E)=\sum_{i=1}^{r}r_iw_i \quad  \mbox{ and }\quad \wdeg(E)=\chi(E)-\wrank(E)\chi(\OO_C).$$ 
The {\it $\w$-slope} of $E$ is defined as
$\mu_{\w}(E) =  \wdeg(E)/\wrank(E)$. $E$ is said {\it${\w}$-semistable} ({\it $\w$-stable} respectively) if  for any proper subsheaf $F$ of $E$ we have $\mu_{\w}(F) \leq \mu_{\w}(E)$ ($\mu_{\w}(F) < \mu_{\w}(E)$ respectively).
We denote by $\Uw^s(\ur,d)$ the moduli space parametrizing isomorphism classes of $\w$-stable depth one sheaves on $C$ with  multirank $\ur$ and $\w$-degree $d$ and by $\Uw(\ur,d)$ its compactification, which is obtained by considering   $S$-equivalence classes of $\w$-semistable depth one sheaves. 
\vspace{2mm}

For any depth one sheaf $E$ we define 
\begin{equation}
\label{EQ:DELTA}
\Delta_{\w}(E)=\wdeg(E)-\sum_{i=1}^{\gamma}\deg(E_i).    
\end{equation}
We say that $\w$ is a {\it good polarization}  on $C$ if $\Delta_{\w}(E)\geq 0$ for all depth-one sheaves and equality holds if and only if $E$ is locally free. Good polarizations were introduced in \cite{BFPol},  where the authors proved that good polarizations always exist on any stable nodal curve $C$ with $p_a(C) \geq 2$. Moreover, if $\w$ is good, then $\cO_C$ is $\w$-stable and the converse holds when $C$ is a nodal curve of compact type (see \cite[Theorem 3.10]{BFPol}). It is also conjectured that this should hold for any nodal curve.  
\vspace{2mm}

Finally, we recall the notion of coherent system on a polarized nodal curve $(C,\w)$ (see \cite{BFCoh} for details). We refer to \cite{BGMN} for treatment of the smooth case. A {\it coherent system} is given by a pair $(E,V)$, where $E$ is a depth one sheaf on $C$  and $V$ is a subspace of $H^0(E)$. If $\wrank(E) = r$, $\wdeg(E)= d$ and  $\dim V=k$ it is said of type $(r,d,k)$ (and of multitype $(\ur,d,k)$ if $\urank(E) = \ur$).

For any  $\alpha \in \R$, the {\it $\wa$-slope}  of $(E,V)$ is defined as 
$$\mu_{\w,\alpha} (E,V) = \mu_{\w}(E) + \alpha \dim(V)/\wrank(E).$$   
$(E,V)$ is  said  to be $\wa$-stable if for any proper coherent subsystem $(F,U)$  of $(E,V)$ we have
$\mu_{\w,\alpha} (F,U) < \mu_{\w,\alpha} (E,V)$. 
We denote by 
${\mathcal G}_{(C,\w),\alpha}(r,d,k)$  the  moduli space parametrizing $\wa$-stable  coherent systems of type $(r,d,k)$. If we fix  $\ur=(r_1,\dots,r_{\gamma})$, we obtain the moduli space $\Gwa(\ur,d,k)$,  which is a component of the previous one. For more details one can see \cite{BFCoh}. 
\vspace{2mm}

In this paper we will assume  $k < r$.
In \cite{BFBGN} the authors proved that for any $\w$ there exists $M_{\w}>0$ such that  $\Gwa(\ur,d,k)$ is  empty  whenever $\alpha\not\in(0,M_{\w})$. Moreover, there are a finite number of values $0<\alpha_1<\cdots<\alpha_L<M_{\w}$, called critical values, such that, the property of $\wa$-stability is independent on the choice of $\alpha \in(\alpha_i,\alpha_{i+1})$. Hence, for fixed $\w$, there are up to finitely many different and not empty moduli spaces $\Gwa(\ur,d,k)$.
We denote by $\GwL(\ur,d,k)$ the ``terminal'' moduli space, the one obtained by considering $\alpha\in (\alpha_L,M_{\w})$. If $\w$ is a good  polarization, then  $M_{\w}=d/(r-k)$ and hence  $d>0$, see \cite{BFBGN}. In the same paper, these spaces have been described using BGN extensions (in analogy of what happens for the smooth case in \cite{BGN97}).
We recall that a {\it BGN estension} of type $(r,d,k)$ on $(C,\w)$ is an extension
\begin{equation}
\label{EQ:BGNext}
\underline{e}:\qquad 0 \to V \otimes \OO_C \to E \to F \to 0,
\end{equation}
where $V$ is a vector space of dimension $k$, $F$ is a depth one sheaf on $C$ with $\wdeg(F)= d$ and $\wrank(F)= r-k$ and ${\underline e}=(e_1,\dots, e_k)\in \Ext^1(F,V\otimes \cO_C)\simeq \Ext^1(F,\cO_C)^{\oplus k}$ is such that $\{e_1,\dots,e_k\}$ are linearly independent.

\section{Brill-Noether loci on nodal reducible curves}
\label{SEC:2}

Let $(C,\w)$ be a polarized nodal curve. Brill-Noether loci can be defined in analogy with the smooth case as follows. For any $d\in \Q, \ur= (r_1,\dots,r_{\gamma})\in \bN^\gamma$ and $k \geq 1$ we define set-theoretically the {\bf Brill-Noether loci} as:
$$\Bw(\ur,d,k) = \{ [F] \in \Uw^s(\ur,d) \,\vert\, h^0(F) \geq k \},$$
$$\Btw(\ur,d,k) = \{ [F] \in \Uw(\ur,d) \,\vert\, h^0(gr(F)) \geq k \}.$$
When $C$ is nodal but irreducible, these spaces have been introduced and studied in \cite{Bho07}.

\begin{proposition}
\label{PROP:scheme}
$\Bw(\ur,d,k)$ is a closed subscheme of the moduli space $\Uw^s(\ur,d)$. If it is non-empty, let $Z$ be any irreducible component of $\Bw(\ur,d,k)$ and denote by $X_Z$ the irreducible component of $\Uw^s(\ur,d)$ containing $Z$. Then $Z$ has codimension at most 
$k(k-d +r(p_a(C)-1))$ in $X_Z$, where $r=\sum_{i=1}^{\gamma}w_ir_i$.
\end{proposition}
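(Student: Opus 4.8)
The plan is to realize $\Bw(\ur,d,k)$ as a determinantal (degeneracy) locus inside $\Uw^s(\ur,d)$, exactly as in the smooth higher-rank setting, and to read off both the scheme structure and the codimension bound from the standard theory of such loci. Write $S=\Uw^s(\ur,d)$ and let $\pi\colon C\times S\to S$ be the projection. The starting point is a universal family. A genuine universal sheaf need not exist globally on $C\times S$, but one always has one \'etale-locally on $S$; equivalently, one may carry out the construction on the parameter scheme $R$ of the GIT/Simpson construction, on which a universal quotient sheaf $\cF$ exists and is flat over $R$, and then descend. Since the determinantal structure produced below is insensitive to twisting $\cF$ by a line bundle pulled back from the base and is invariant under the relevant group action, it glues, respectively descends, to a well-defined closed subscheme of $S$. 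I will therefore argue as if a flat universal family $\cF$ on $C\times S$ were given.

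Next I apply cohomology and base change. As $C$ is projective of dimension one and $\cF$ is $S$-flat and coherent, Grothendieck's theory furnishes a two-term complex of locally free $\OO_S$-modules
\begin{equation*}
\phi\colon K^0\longrightarrow K^1
\end{equation*}
computing $R\pi_*\cF$ universally; in particular, for every point $s\in S$ corresponding to a sheaf $F$ one has $h^0(F)=\dim\Ker(\phi_s)$ and $h^1(F)=\dim\coker(\phi_s)$. Depth one causes no difficulty here: $\cF$ is simply a coherent $S$-flat sheaf, and the base-change complex exists for any such family. Setting $n_0=\rank K^0$ and $n_1=\rank K^1$, the condition $h^0(F)\geq k$ is equivalent to $\rank(\phi_s)\leq n_0-k$, so that $\Bw(\ur,d,k)$ is the degeneracy locus $\{\,s\in S: \rank(\phi_s)\leq n_0-k\,\}$, cut out locally by the vanishing of the $(n_0-k+1)$-minors of a matrix representing $\phi$. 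This exhibits $\Bw(\ur,d,k)$ as a closed subscheme of $S$ and proves the first assertion.

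For the codimension bound I invoke the classical estimate for determinantal loci: whenever the locus where a morphism of vector bundles of ranks $n_0$ and $n_1$ has rank $\leq\rho$ is non-empty, each of its irreducible components has codimension at most $(n_0-\rho)(n_1-\rho)$ in the ambient scheme. Taking $\rho=n_0-k$, any component $Z$ sitting inside an ambient component $X_Z$ of $S$ satisfies
\begin{equation*}
\codim_{X_Z}Z\;\leq\;(n_0-\rho)(n_1-\rho)\;=\;k\,(n_1-n_0+k).
\end{equation*}
It remains to express $n_1-n_0$ intrinsically. Since $\chi(F)=\dim\Ker(\phi_s)-\dim\coker(\phi_s)=n_0-n_1$, we have $n_1-n_0=-\chi(F)$; and from $\wdeg(F)=\chi(F)-\wrank(F)\chi(\OO_C)$ together with $\wrank(F)=\sum_i w_ir_i=r$ and $\chi(\OO_C)=1-p_a(C)$ we obtain
\begin{equation*}
\chi(F)=\wdeg(F)+r\,\chi(\OO_C)=d-r\bigl(p_a(C)-1\bigr).
\end{equation*}
Substituting $n_1-n_0=r(p_a(C)-1)-d$ into the displayed inequality gives $\codim_{X_Z}Z\leq k\bigl(k-d+r(p_a(C)-1)\bigr)$, which is the assertion.

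The one genuinely delicate point is the first paragraph: the non-existence of a global universal sheaf forces the determinantal construction to be performed on an auxiliary parameter space (or \'etale-locally) and then transported to $\Uw^s(\ur,d)$. What must be checked is that the scheme structure so obtained is independent of these choices and that the codimension estimate survives the descent. On the stable locus this is automatic, because the quotient map realizing the moduli space is, up to the free action of the relevant projective group, a principal bundle with fibres of constant dimension, so codimensions computed upstairs coincide with those on $S$. This is precisely the ``technical adjustment'' to Mercat's smooth argument demanded by the nodal reducible setting; everything else is formal.
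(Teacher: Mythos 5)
Your proposal is correct and follows essentially the same route as the paper: realize the Brill--Noether locus as the degeneracy locus of a two-term complex of vector bundles computing $H^0$ and $H^1$ fibrewise, apply the standard determinantal codimension bound, and transport the scheme structure and codimension through the (good quotient of the) parameter space, where stability makes the fibres of constant dimension. The only difference is presentational: the paper constructs the complex explicitly on the Quot-scheme locus $R^s$ via an auxiliary effective divisor $D$ and Grauert's theorem, whereas you invoke the abstract cohomology-and-base-change complex, which amounts to the same thing.
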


\begin{proof}
In order to give a subscheme structure to the above subsets we follow the approach of Mercat in the case of smooth curves (see \cite{MerP,Mer1,Mer2}). Technical adjustments are needed to make it work in the case of nodal reducible curves. 
\vspace{2mm}

We recall that if $F$ is a $\w$-semistable depth one sheaf with $\wrank(F)= r$  and $\wdeg(F) = d'$ big enough, then $F$ is a quotient of a trivial sheaf on $C$ of rank $N= d'+ r(1 -p_a(C))$ (see \cite[Proposition 16, Chapter 7]{Ses}). Let $Q$ be the Quot scheme parametrizing quotients of $\cO_C^{N}$ with fixed Hilbert polynomial $p$ and fixed multirank $\ur$ and let denote by  $\cF$ the universal family of quotients.
Let $R^{s} \subset Q$ be the subscheme parametrizing quotients $q:\cO_C^{N} \to \cF_q$  where $\cF_q$  is  a $\w$-stable  depth one  sheaf and such that $H^0(q):\mathbb C^N \to H^0(\cF_q)$ is an isomorphism. 
 We denote by $\cF^s$ the restriction of $\cF$ to $R^s \times C$, it is a coherent sheaf on $R^s \times C$  which is flat over $R^s$.  As usual we denote by $p_i$, $i= 1,2$, the projections of $R^s \times C$ onto factors. 
 By \cite[Theorem 19, Chapter 7]{Ses}, the moduli space $\Uw^s(\ur,d)$ is a good quotient of $R^s$ for the action of $\SL(N)$; so we have a proper morphism $\pi' \colon R^s \to \Uw^s(\ur,d')$.
We recall that we have an isomorphism 
$$\Uw^s(\ur,d)  \to \Uw^s(\ur,d')$$
by tensoring any sheaf $F$ with a line bundle  $L$ on $C$  as long as the   restrictions $L_i$ on the component $C_i$ satisfy  the condition
$$\deg(L_i)w_j = \deg(L_j)w_i \qquad \forall i,j\in \{1,\dots,\gamma\}.$$
Hence we  can consider  the proper morphism $\pi:R^s\to \Uw^s(\ur,d)$ defined by composition.
To give a scheme structure to Brill-Noether loci  we proceed as in the smooth case: we will see $\Bw(\ur,d,k)$  as the image by $\pi$ of a degeneracy locus $R(\ur,d,k)  \subset R^s$ of a suitable map between vector bundles. 
We assume that $R^s$ is irreducible, in general it is enough  to consider each irreducible component.  
We choose an effective  divisor $D$ on the curve $C$ satisfying the following   conditions:  any  $x \in \Supp(D)$ is a smooth point of $C$ and $\wdeg( \OO_C(D))= a >>0$.  Then $p_2^*(\OO_C(D)) \simeq \OO_{R^s \times C}(R^s \times D)$.
Let's consider the  following sheaves on $R^s$: 
$$G_1 = {p_1}_{*}(\cF^s \otimes p_2^*(\OO_C(D)) \quad  G_2 = {p_1}_{*}({\cF^s \otimes p_2^*(\OO_C(D)}_{\vert R^s \times D})).$$
If $a$ is sufficiently big, by Grauert's Theorem, $G_1$ and $G_2$ are vector bundles on $R^s$ whose fibers are
$$(G_1)_{q}  \simeq  H^0((\cF^s)_q \otimes \cO_C(D)) \quad\mbox{ and }\quad  
(G_2)_{q} \simeq  H^0( {(\cF^s)_q \otimes \cO_C(D)}_{\vert D})$$
respectively.

For any $q \in R^s$, $(\cF^s)_q$ is a depth one sheaf on $C$ which is $\w$-stable and it fit into  the following exact sequence:
$$ 0 \to (\cF^s)_q \to (\cF^s)_q \otimes \cO_C(D) \to (\cF^s)_q \otimes \cO_C(D)_{\vert D} \to 0.$$
We have a map  of vector bundles 
$\Phi \colon G_1 \to G_2$, such that  for any $q \in R^s$  the map on the fibers $\Phi_q$ fit into the following exact sequence
$$ 0 \to H^0((\cF^s)_q) \to H^0((\cF^s)_q \otimes O_C(D))\stackrel{\Phi_q}\to H^0((\cF^s)_q \otimes O_C(D)_{\vert D}) \to H^1((\cF^s)_q) \to 0.$$
Let $R(\ur,d,k)$ be the degeneracy locus in $R^s$ of points $q$ such that $\rk(\Phi_q) \leq h^0(\cF_q \otimes \cO_C(D)) -k$. If $R(\ur,d,k)$ is not empty,  then, by  \cite[Chapter 2, page 83]{ACG}, every irreducible component has codimension at most 
$$(\rk(G_1)-(h^0(\cF_q \otimes \cO_C(D))-k))(\rk(G_2)-(h^0(\cF_q \otimes \cO_C(D))-k))=k(k-d+r(p_a(C)-1)).$$

As $R(\ur,d,k)$ is a closed and $\SL(N)$-invariant subscheme of $R^s$ and  $\pi$ is a good quotient, then $\Bw(\ur,d,k) = \pi(R(\ur,d,k))$ is a closed subscheme of $\Uw^s(\ur,d)$. Moreover, codimension is preserved as $R(\ur,d,k)$ is contained in the $\SL(N)$-stable locus.   So we can conclude that if $\Bw(\ur,d,k)$  is not empty  its  codimension   in $\Uw^s(\ur,d)$ is at most $k(k-d+r(p_a(C)-1))$.
\end{proof}

\begin{remark} The same construction allows  us to give a scheme structure to the Brill-Noether loci $\Btw(\ur,d,k) = \{ [F] \in \Uw(\ur,d) \,\vert\, h^0(gr(F)) \geq k \}$. Actually, as in the smooth case, we do not have any information about its  codimension. 
\end{remark}


Let $r \in \N$, in the sequel we will consider $\w$-semistable depth one sheaves on $C$ having rank $r$ an any irreducible component, i.e. with multirank 
$r\cdot \underline{1}=(r,r,\dots,r)$. If $E$ is such a sheaf, we have that 
$\wrank(E)=r$ and $d=\wdeg(E)=\chi(E)-r\chi(\cO_C)$,  so $d$ is an integer.
For any $d \in \N$, the moduli space $\Uw^s(r\cdot \underline{1},d)$ has been described in \cite{Tei91,Tei95}: it is   reducible and connected, each irreducible component has dimension $r^2(p_a(C)-1) +1$ and the 
general element is a $\w$-stable locally free sheaf whose restrictions to  irreducible components are stable too.
\medskip

As in the smooth case, we can define the Brill-Noether number
\begin{equation}
\label{BrillNoetherNumber}
\beta_C(r,d,k)= r^2(p_a(C)-1)+1-k(k-d+r(p_a(C)-1)),
\end{equation}
which is an integer under the above assumption.
\medskip

Assume that $E \in \Bw(r\cdot \underline{1},d,k)$ is a $\w$-stable locally free sheaf. 
Then, the Zariski tangent space $T_{E}(\Bw(r\cdot \underline{1},d,k))$  can be described as in the smooth case (see \cite{MerP}) in the following proposition.

\begin{proposition}
\label{PROP:Petri}
\hfill\par
\begin{enumerate}[(a)]
\item{} If $\Bw(r\cdot \underline{1},d,k) \not= \emptyset$ and $\Bw(r\cdot \underline{1},d,k) \not= \Uw^s(r\cdot \underline{1},d)$, then any irreducible component has dimension  at least $\beta_C(r,d,k)$.
\item{} Let $[E] \in \Bw(r\cdot \underline{1},d,k) \setminus \Bw(r\cdot \underline{1},d,k+1)$ be a locally free sheaf.
The Zariski tangent space  $T_E(\Bw(r\cdot \underline{1},d,k))$ is the annihilator of   the image of the Petri map:
$$\mu_E  \colon H^0(E) \otimes H^0(E^* \otimes {\omega}_C) \to  H^0(E \otimes E^* \otimes {\omega}_C);$$
 $\Bw(r\cdot \underline{1},d,k)$ is smooth of dimension $\beta_C(r,d,k)$ at $E$ if and only if $\mu_E$ is injective. 
\end{enumerate}
\end{proposition}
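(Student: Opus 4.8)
The plan is to derive (a) immediately from Proposition~\ref{PROP:scheme} together with the dimension formula for the moduli space, and to prove (b) by the deformation-theoretic analysis of a determinantal locus at a point of exact corank, exactly as Mercat does in the smooth case (see \cite{MerP}), with the adaptations to the nodal reducible setting made possible by the local freeness of $E$.

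For (a): let $Z$ be an irreducible component of $\Bw(r\cdot\underline{1},d,k)$ and let $X_Z\subseteq\Uw^s(r\cdot\underline{1},d)$ be the component containing it. Proposition~\ref{PROP:scheme} gives $\codim_{X_Z}Z\leq k(k-d+r(p_a(C)-1))$, while by \cite{Tei91,Tei95} every component of $\Uw^s(r\cdot\underline{1},d)$ has dimension $r^2(p_a(C)-1)+1$. Subtracting yields $\dim Z\geq\beta_C(r,d,k)$ by the definition in \eqref{BrillNoetherNumber}.

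For (b) I would first fix the ambient tangent space. Since $E$ is locally free and $\w$-stable, $T_{[E]}\Uw^s(r\cdot\underline{1},d)\cong\Ext^1(E,E)\cong H^1(E\otimes E^*)$, and a Riemann--Roch computation ($\deg(E\otimes E^*)=0$ and $\Hom(E,E)=\bC$ by stability) gives $\dim H^1(E\otimes E^*)=r^2(p_a(C)-1)+1$, so the moduli space is smooth at $[E]$. The hypothesis $[E]\notin\Bw(r\cdot\underline{1},d,k+1)$ means $h^0(E)=k$ exactly, so in the determinantal description of Proposition~\ref{PROP:scheme} the map $\Phi_q$ has corank exactly $k$, with $\ker\Phi_q=H^0(E)$ and $\coker\Phi_q=H^1(E)$. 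At such a point of expected corank the Zariski tangent space to the degeneracy locus is the kernel of the induced map $T_{[E]}\Uw^s\to\Hom(\ker\Phi_q,\coker\Phi_q)=\Hom(H^0(E),H^1(E))$ (see \cite[Ch.~II]{ACGH}). Descending to the moduli space, the orbit directions map to zero and the induced map is the cup-product-and-contraction $\bar\mu\colon H^1(E\otimes E^*)\to\Hom(H^0(E),H^1(E))$, $\xi\mapsto(s\mapsto\xi\cup s)$, which sends a first-order deformation $\xi$ of $E$ to the obstruction to lifting the sections of $E$; thus $T_E(\Bw(r\cdot\underline{1},d,k))=\ker\bar\mu$.

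The last step is to dualize. By Serre duality on $C$ (available since $E$ is locally free, via $H^q(E)^*\simeq H^{1-q}(E^*\otimes\omega_C)$) the transpose of $\bar\mu$ is identified, using $H^1(E)^*\simeq H^0(E^*\otimes\omega_C)$ and $H^1(E\otimes E^*)^*\simeq H^0(E\otimes E^*\otimes\omega_C)$, with the Petri map $\mu_E\colon H^0(E)\otimes H^0(E^*\otimes\omega_C)\to H^0(E\otimes E^*\otimes\omega_C)$. Hence $T_E(\Bw(r\cdot\underline{1},d,k))=\ker\bar\mu=(\Img\mu_E)^{\perp}$ is the annihilator of $\Img\mu_E$ under the Serre duality pairing, giving the first assertion. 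For the criterion, $\dim T_E(\Bw(r\cdot\underline{1},d,k))=r^2(p_a(C)-1)+1-\dim\Img\mu_E$; since the source of $\mu_E$ has dimension $k\,h^1(E)=k(k-d+r(p_a(C)-1))$, the map $\mu_E$ is injective iff this tangent space has dimension $\beta_C(r,d,k)$. Combining with the lower bound $\dim_E\Bw(r\cdot\underline{1},d,k)\geq\beta_C(r,d,k)$ from (a) and the inequality $\dim_E\leq\dim T_E$, one concludes that $E$ is a smooth point of dimension $\beta_C(r,d,k)$ precisely when $\mu_E$ is injective. The main obstacle is the step identifying the degeneracy-locus tangent space with $\ker\bar\mu$ on the nodal reducible curve: this is exactly where local freeness of $E$ is indispensable, since it guarantees $\Ext^1(E,E)=H^1(E\otimes E^*)$, a clean Serre duality, and that the local structure at the nodes does not interfere—an obstruction that genuinely arises for depth-one sheaves (cf.\ the discussion of \cite{Bho07}).
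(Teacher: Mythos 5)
Your proposal is correct and follows essentially the same route as the paper: part (a) by combining the codimension bound of Proposition~\ref{PROP:scheme} with the fact that every component of $\Uw^s(r\cdot \underline{1},d)$ has dimension $r^2(p_a(C)-1)+1$, and part (b) by identifying $T_{[E]}\Uw^s(r\cdot \underline{1},d)\simeq H^1(E\otimes E^*)$, realizing $T_E(\Bw(r\cdot \underline{1},d,k))$ as the kernel of the cup-product map to $\Hom(H^0(E),H^1(E))$ at a point of exact corank, and dualizing via Serre duality to the Petri map. The only difference is one of detail: where the paper cites Mercat and describes the map via \v{C}ech cocycles, you justify the tangent-space identification through the determinantal-locus theory of \cite{ACGH} applied to the construction in Proposition~\ref{PROP:scheme} and carry out the closing dimension count explicitly, which is a faithful expansion of the same argument.
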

\begin{proof}
(a) follows by Proposition \ref{PROP:scheme} since each  irreducible component of $\Uw^s(r\cdot \underline{1},d)$ has dimension $r^2(p_a(C)-1) +1$. \hfill\par
(b) Since $E$ is $\w$-stable and locally free, the moduli space $\Uw^s(r\cdot \underline{1},d)$ is smooth at $[E]$ and the tangent space $T_{[E]}\Uw^s(r\cdot \underline{1},d)$ can be identified with $\Ext^1(E,E) \simeq H^1(C,E \otimes E^*)$, (see \cite[Corollary 4.5.2]{HL}). Note that if $E$ is not locally free then $[E]\in \Uw^s(r\cdot \underline{1},d)$ is a singular point by Lemma \ref{LEM:EXT1}. Let $[E] \in \Bw(r\cdot \underline{1},d,k) \setminus \Bw(r\cdot \underline{1},d,k+1)$ with $E$ locally free.
As in the smooth case (see \cite{MerP}), we can identify the Zariski tangent space  $T_{[E]}(\Bw(r\cdot\underline{1},d,k))$ as the kernel of the map
$$c \colon H^1(\cHom(E,E)) \to \Hom(H^0(E),H^1(E)),$$
which, in terms of cocycles and Cech cohomology can be described as the map
sending $(\phi_{ij}) \mapsto [ s \mapsto \phi_{ij}(s)]$.
Since $E$ is locally free the dual map of $c$ is the Petri map.
 \end{proof}

There is a strong relation between coherent systems and Brill-Noether loci, as the next proposition shows.

\begin{proposition}
\label{PROP:COH}
Let $(C,\w)$ be a polarized nodal curve. Let $0<\alpha_1<\cdots < \alpha_L$ be the critical values for coherent systems of multitype $(\ur,d,k)$. Then
\begin{enumerate}[(a)]
\item If $(E,V)$ is $\wa$-stable for $\alpha\in (0,\alpha_1)$, then $E$ is $\w$-semistable;
\item if $E$ is $\w$-stable and $h^0(E)\geq k$, then for all $V\subseteq H^0(E)$ with $\dim V=k$, $(E,V)$ is $\wa$-stable for $\alpha\in (0,\alpha_1)$;
\item let $(E,V)\in \GwL(\ur,d,k)$,  then $E$ is $\w$-unstable if and only if $(E,V)$ is $\wa$-unstable for $\alpha<\alpha_1$.
\end{enumerate}
\end{proposition}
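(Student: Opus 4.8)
The plan is to analyze, for every proper coherent subsystem $(F,U)\subseteq (E,V)$, the difference $\mu_{\w,\alpha}(F,U)-\mu_{\w,\alpha}(E,V)$ as an affine function of $\alpha$. Writing $r'=\wrank(F)$, $d'=\wdeg(F)$, $k'=\dim U$, and recalling $\wrank(E)=r$, $\wdeg(E)=d$, $\dim V=k$, this difference equals $(\mu_\w(F)-\mu_\w(E))+\alpha(k'/r'-k/r)$. Its limit as $\alpha\to 0^+$ is the ordinary slope comparison $\mu_\w(F)-\mu_\w(E)$, and its sign for small $\alpha$ is governed lexicographically: first by $\mu_\w(F)-\mu_\w(E)$, then by $k'/r'-k/r$. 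Since by \cite{BFBGN} there are finitely many critical values and $\wa$-stability is constant on each chamber, deciding stability on $(0,\alpha_1)$ amounts to reading off this lexicographic limit; throughout I may replace $U$ by $V\cap H^0(F)$, which does not decrease $k'$.

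For (a) I would argue by contraposition: if $E$ is not $\w$-semistable there is $F\subsetneq E$ with $\mu_\w(F)>\mu_\w(E)$, and the subsystem $(F,V\cap H^0(F))$ then has strictly positive limit, hence destabilizes $(E,V)$ for all sufficiently small $\alpha$; since $(0,\alpha_1)$ contains arbitrarily small positive values, this contradicts the $\wa$-stability of $(E,V)$ there. For (b) I would assume $E$ is $\w$-stable and take any proper $(F,U)$. If $F\subsetneq E$ as a sheaf, stability gives $\mu_\w(F)<\mu_\w(E)$, so the limit is strictly negative. If instead $F=E$ as sheaves, then $U\subsetneq V$ forces $k'<k$ and $r'=r$, so the difference equals $\alpha(k'-k)/r<0$ for every $\alpha>0$. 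In both cases the lexicographic limit condition holds, and by chamber-constancy this proves $(E,V)$ is $\wa$-stable on all of $(0,\alpha_1)$.

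For (c) I would treat the two implications separately. If $E$ is $\w$-unstable, a subsheaf $F\subsetneq E$ with $\mu_\w(F)>\mu_\w(E)$ yields, as in (a), a subsystem with strictly positive limit that strictly destabilizes $(E,V)$ for small $\alpha$, so $(E,V)$ is $\wa$-unstable on $(0,\alpha_1)$. Conversely, suppose $(E,V)$ is $\wa$-unstable for $\alpha\in(0,\alpha_1)$. Among the finitely many subsystems defining the chamber walls, there is one, $(F,U)$, whose affine function stays positive throughout $(0,\alpha_1)$, so its limit satisfies $\mu_\w(F)-\mu_\w(E)\ge 0$. If this limit is positive, then $E$ is $\w$-unstable and we are done. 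If it vanishes, then positivity on $(0,\alpha_1)$ forces $k'/r'>k/r$, whence $\mu_{\w,\alpha}(F,U)-\mu_{\w,\alpha}(E,V)=\alpha(k'/r'-k/r)>0$ for \emph{every} $\alpha>0$; in particular $(F,U)$ destabilizes $(E,V)$ in the terminal chamber $(\alpha_L,M_\w)$, contradicting $(E,V)\in\GwL(\ur,d,k)$. Hence the limit is positive and $E$ is $\w$-unstable.

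The delicate point, on which I expect to spend the most care, is the passage in (c) from instability at a single $\alpha$ to a well-defined limiting destabilizer: one must invoke the finiteness of critical values from \cite{BFBGN} to guarantee that the maximizing subsystem is essentially constant on $(0,\alpha_1)$, so that the lexicographic limit is meaningful. The terminal hypothesis $(E,V)\in\GwL(\ur,d,k)$ then enters precisely to exclude the ``equal-slope, more-sections'' destabilizers (those with $\mu_\w(F)=\mu_\w(E)$ and $k'/r'>k/r$), which would otherwise destabilize $(E,V)$ in every chamber and so cannot coexist with terminal stability.
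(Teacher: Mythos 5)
Your proof is correct and takes essentially the same route as the paper: the paper's own proof is just a citation of the smooth-case arguments (\cite{RV} for (a)--(b), \cite{BG02} for (c)), and your write-up reproduces exactly that standard argument in the polarized setting --- viewing $\mu_{\w,\alpha}(F,U)-\mu_{\w,\alpha}(E,V)$ as an affine function of $\alpha$, invoking the chamber structure of \cite{BFBGN} so that signs cannot change inside $(0,\alpha_1)$, and using terminal stability to exclude equal-slope destabilizers with more sections. One cosmetic point in (c): when the limit vanishes, the destabilizer could also have $k'/r'=k/r$, making the slope difference identically zero, but this equally contradicts strict $\wa$-stability in the terminal chamber $(\alpha_L,M_{\w})$, so your conclusion stands either way.
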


\begin{proof}
The proof of $(a)$ and $(b)$, as in the smooth case (see \cite{RV}), follows directly from the definitions of $\w$-(semi)stability and of $\wa$-(semi)stability. The proof for $(c)$  works as in the smooth case (see \cite{BG02}).
\end{proof}

A simple but relevant consequence of the above proposition is the following.

\begin{proposition}
Let $(C,\w)$ be a polarized nodal curve with $\w$ good. If $\Bw( \underline{r},d,k)\neq \emptyset$  then $d \geq 0$. Moreover, if $1\leq k < \sum_{i=1}^\gamma w_ir_i$ then $d>0$.
\end{proposition}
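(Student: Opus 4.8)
The plan is to combine the nonzero global section provided by nonemptiness with the $\w$-stability of $\cO_C$, which holds because $\w$ is good. Since $\Bw(\ur,d,k)\neq\emptyset$ and $k\geq 1$, fix a $\w$-stable depth one sheaf $F$ with $\wrank(F)=r=\sum_{i=1}^{\gamma}w_ir_i$, $\wdeg(F)=d$ and $h^0(F)\geq k\geq 1$. A nonzero section yields a nonzero morphism $s\colon\cO_C\to F$; set $T=\Ima(s)\subseteq F$. As a subsheaf of a depth one sheaf, $T$ is depth one, and being nonzero it has $\wrank(T)>0$ (a nonzero depth one sheaf cannot be torsion). Thus $T$ is simultaneously a nonzero quotient of $\cO_C$ and a nonzero subsheaf of $F$.

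First I would record that $\mu_{\w}(\cO_C)=0$, since $\wrank(\cO_C)=\sum_{i=1}^{\gamma}w_i=1$ and $\wdeg(\cO_C)=\chi(\cO_C)-\wrank(\cO_C)\chi(\cO_C)=0$; as $\w$ is good, $\cO_C$ is $\w$-stable. From the exact sequence $0\to K\to\cO_C\to T\to 0$ I then deduce $\mu_{\w}(T)\geq 0$: either $K=0$, so $T\cong\cO_C$ has slope $0$, or $K$ is a nonzero proper subsheaf of $\cO_C$, hence $\wrank(K)>0$ and $\mu_{\w}(K)<0$ by stability; additivity of $\wdeg$ and $\wrank$ in the sequence then gives $\wdeg(T)>0$ and $0<\wrank(T)<1$, so $\mu_{\w}(T)>0$.

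It remains to compare $T$ with $F$. If $T\subsetneq F$ is a proper subsheaf, the $\w$-stability of $F$ gives $\mu_{\w}(T)<\mu_{\w}(F)$, whence $\mu_{\w}(F)>0$ and $d>0$. If instead $T=F$, then $\wrank(F)=\wrank(T)\leq\wrank(\cO_C)=1$ forces $r=1$, and then $\wrank(K)=0$ forces $K=0$, so $F\cong\cO_C$ and $d=0$. In either case $d\geq 0$, which proves the first assertion. For the second, the hypothesis $1\leq k<r$ excludes $r=1$, so $r\geq 2$; since $\wrank(T)\leq 1<r=\wrank(F)$, the subsheaf $T$ is forced to be proper and the first alternative applies, giving $d>0$.

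The step I expect to be the most delicate is the slope estimate $\mu_{\w}(T)\geq 0$: on a reducible curve one must keep careful track of $\w$-ranks, in particular verifying that the nonzero depth one sheaves $K$ and $T$ have strictly positive $\w$-rank, so that the stability inequalities for $\cO_C$ point in the intended direction. As an alternative route to $d>0$ when $k<r$, one may appeal to Proposition \ref{PROP:COH}(b): it exhibits $(F,V)$ as a $\wa$-stable coherent system for $\alpha\in(0,\alpha_1)$, so $\Gwa(\ur,d,k)\neq\emptyset$ there; since this moduli space is empty for $\alpha\notin(0,M_{\w})$ and $M_{\w}=d/(r-k)$ for good $\w$ with $r-k>0$, its nonemptiness forces $M_{\w}>0$ and hence $d>0$.
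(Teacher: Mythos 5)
Your skeleton matches the paper's proof — both arguments take the image of an evaluation map inside the stable sheaf and play the non-negativity of that image's $\w$-slope against the $\w$-stability of the ambient sheaf — but the key input differs. The paper uses the full evaluation map $\ev_V\colon V\otimes\cO_C\to F$ of a $k$-dimensional subspace $V$ and obtains non-negativity of the slope of its image (and of $\wdeg(F)$ itself, in the surjective case) from the fact that a globally generated depth one sheaf has non-negative $\w$-degree when $\w$ is good (\cite[Theorem 2.9(b)]{BFPol}); you instead work with a single section and derive $\mu_{\w}(T)\geq 0$ from the $\w$-stability of $\cO_C$ via the kernel sequence $0\to K\to\cO_C\to T\to 0$. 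This substitution is legitimate — the paper records in Subsection \ref{Subsec:depthone} that $\w$ good implies $\cO_C$ is $\w$-stable — and your slope estimate, including the check that the nonzero depth one sheaves $K$ and $T$ have positive $\w$-rank, is correct.

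There is, however, a genuine error in the branch $T=F$. From $\wrank(F)=\wrank(T)\leq 1$ you conclude that ``$r=1$'', but the proposition allows an arbitrary multirank $\ur\in\bN^{\gamma}$, so $r=\sum_{i}w_ir_i$ can be strictly smaller than $1$ (some $r_i$ may vanish); nothing forces $r=1$, $K=0$, or $F\cong\cO_C$. Concretely, take $F=\cO_{C_1}$, the structure sheaf of one component viewed as a sheaf on $C$: it is $\w$-stable for any polarization (every nonzero proper subsheaf has the same $\w$-rank $w_1$ and strictly smaller Euler characteristic), $h^0(F)=1$, and its section has image $T=F$ with $K$ the ideal sheaf of $C_1$, which is nonzero; moreover $d=\wdeg(\cO_{C_1})=\Delta_{\w}(\cO_{C_1})>0$ because $\cO_{C_1}$ is not locally free and $\w$ is good. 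So your claimed conclusion ``$F\cong\cO_C$ and $d=0$'' fails in this case, even though the proposition's assertion $d\geq 0$ of course still holds. The repair is immediate from what you already proved: when $T=F$, your dichotomy on $K$ gives either $K=0$, whence $F\cong\cO_C$ and $d=0$, or $K\neq 0$, whence $d=\wdeg(T)>0$; in both cases $d\geq 0$. (A minor related slip: in the last part, ``$r\geq 2$'' is unjustified since $r$ need not be an integer, but you only use $1<r$, which follows from $1\leq k<r$.) With these corrections your argument is complete, and your alternative route to the second assertion via Proposition \ref{PROP:COH}(b) together with $M_{\w}=d/(r-k)$ for good $\w$ is also valid.
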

\begin{proof}
Let $[E]\in \Bw(\underline{r},d,k)$. Then $E$ is $\w$-stable and $h^0(E)\geq k$. Let $V$ be any subspace of $H^0(E)$ of dimension $k$. Consider the evaluation map $\ev_V:V\otimes \cO_C\to E$.
If it is surjective, then $E$ is globally generated, so $\wdeg(E) \geq 0$ by \cite[Theorem 2.9(b)]{BFPol}.
On the contrary, let $F$ be the image of $\ev_V$. Then, $F$ is a globally generated sheaf of depth one and it is a subsheaf of $E$ which is $\w$-stable. Hence we have
$$0\leq \wdeg(F)/\wrank(F)<\wdeg(E)/\wrank(E)$$ 
which implies $\wdeg(E)>0$. Finally, if $k < \wrank(E)$, the evaluation map $\ev_V$ cannot be surjective. 
\end{proof}

\section{Brill-Noether loci for sheaves with small slope}
\label{SEC:smallslope}
Let $(C, \w)$ be a polarized nodal curve.
In this paper we are interested in studying Brill-Nother loci for depth one sheaves having rank $r$ on all irreducible components of $C$.  They will include  the corresponding loci for vector bundles. 
\medskip

We recall that in the smooth case (see \cite{BGN97}) and in the irreducible nodal case (see \cite{Bho07}), all the elements of Brill-Noether loci for small slope (i.e. $0 \leq d \leq r$) are defined by BGN extensions. This is not true anymore when $C$ is a reducible nodal curve, as it will be  shown in  Example \ref{EX:noBGN}. 
However, we will prove that this actually holds when we consider locally free sheaves. This is  stated in the following Theorem,  which can be seen as a partial generalization of Theorems $A+B$ of \cite{BGN97}. 


\begin{theorem}
\label{THM:genTHMb}
Let $(C,\w)$ be a polarized nodal curve with $\w$ good.  Let $d, r, k \in \mathbb N$ with $r \geq 2$, $k \geq 1$ and $d \geq 0$. Let $E$ be a locally free sheaf in $\Bw( r\cdot \underline{1},d,k)$ which satisfies at least one of the following two conditions:
\begin{enumerate}[(a)]
    \item $ 0 \leq d \leq r$;
    \item the restriction $E_i$ is stable and $ 0 \leq \deg(E_i)\leq r$ for all $i=1,\dots,\gamma$.
\end{enumerate}
Then 
$$d>0,\qquad k<r\leq d+(r-k)p_a(C)$$ 
and $E$ is obtained as a BGN extension of a locally free sheaf of rank $r-k$.
\end{theorem}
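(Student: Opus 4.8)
The plan is to fix a $k$-dimensional subspace $V\subseteq H^0(E)$ and to study the evaluation map $\ev_V\colon V\otimes\OO_C\to E$. Since $\w$ is good, $\OO_C$ is $\w$-stable with $\wrank(\OO_C)=1$ and $\wdeg(\OO_C)=0$, so $V\otimes\OO_C$ is $\w$-semistable of $\w$-slope $0$, whereas $E$ is $\w$-stable. The entire statement reduces to the following \emph{Key Claim}: the map $\ev_V$ is injective and its cokernel $F$ is a locally free sheaf of multirank $(r-k)\cdot\underline{1}$ with $r-k\geq 1$. Granting this, $E$ sits in an exact sequence $0\to V\otimes\OO_C\to E\to F\to 0$ with $F\neq 0$, so $k<r$ (if $k\geq r$ then $F$ would have rank $0$, forcing $F=0$ and $E\simeq\OO_C^{\,r}$, which is not $\w$-stable). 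Since now $k<\wrank(E)=r$, the unnamed Proposition preceding Section \ref{SEC:smallslope} gives $d>0$.

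The heart of the argument, and the step I expect to be the main obstacle, is the Key Claim, where the small-slope hypotheses (a) or (b) enter as in \cite{BGN97}. One must establish three intertwined facts: generic injectivity (so $\wrank(\ker\ev_V)=0$), saturation of the image (torsion-freeness of the cokernel), and the bound $k<r$. The basic ingredients are that the kernel $N\subseteq V\otimes\OO_C$ satisfies $H^0(N)=0$ (linearly independent sections admit no nonzero constant relation), that the image $F_0=\ev_V(V\otimes\OO_C)$ is globally generated hence of $\wdeg\geq 0$ by \cite[Theorem 2.9(b)]{BFPol}, and that $\mu_{\w}(E)=d/r\leq 1$. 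Under (a) these are combined using global $\w$-stability and the small-slope lemma of \cite{BGN97}; under (b) one restricts to each smooth component $C_i$ and applies the smooth result of \cite{BGN97} to the stable bundle $E_i$ with $0\leq\deg(E_i)\leq r$, and then reassembles. The genuinely new difficulty, absent in the smooth and irreducible cases and precisely the reason local freeness of $E$ is needed (cf.\ Example \ref{EX:noBGN}), is to control the behaviour at the nodes so that $F$ is not merely depth one but \emph{locally free}: since $\w$ is good one has $\Delta_{\w}(E)=\Delta_{\w}(V\otimes\OO_C)=0$, and one must deduce $\Delta_{\w}(F)=0$, which forces $F$ to be locally free and simultaneously shows that $\ev_V$ is a subbundle inclusion.

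It remains to identify the extension as a BGN extension and to derive the numerical bound. That the class $\underline{e}=(e_1,\dots,e_k)\in\Ext^1(F,\OO_C)^{\oplus k}$ has linearly independent components follows from $V\hookrightarrow H^0(E)$ being injective, i.e.\ from the established correspondence between coherent systems and BGN extensions in \cite{BFBGN} (see also Proposition \ref{PROP:COH}); hence $k\leq\dim\Ext^1(F,\OO_C)$. Because $F$ is locally free, Lemma \ref{LEM:EXT1} gives $\Ext^1(F,\OO_C)\simeq H^1(\cHom(F,\OO_C))=H^1(F^{*})$. Moreover $H^0(F^{*})=0$: a nonzero map $F\to\OO_C$ would precompose with $E\twoheadrightarrow F$ to give a nonzero $\psi\colon E\to\OO_C$ whose image $I$ is at once a proper quotient of the $\w$-stable $E$ (so $\mu_{\w}(I)>d/r>0$) and a subsheaf of the $\w$-stable $\OO_C$ (so $\mu_{\w}(I)\leq 0$), a contradiction. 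Writing $m=r-k$ and noting $\deg(F)=\wdeg(F)=d$ since $\Delta_{\w}(F)=0$, Riemann--Roch yields $\dim\Ext^1(F,\OO_C)=h^1(F^{*})=d+m(p_a(C)-1)$. Therefore $k\leq d+(r-k)(p_a(C)-1)$, which rearranges to $r\leq d+(r-k)\,p_a(C)$, completing the proof.
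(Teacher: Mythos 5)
Your overall architecture matches the paper's: fix $V\subseteq H^0(E)$, show $\ev_V\colon V\otimes\OO_C\to E$ is an injective map of vector bundles with locally free cokernel $F$, identify the resulting extension as a BGN extension via the coherent-systems results of \cite{BFBGN} (your use of Proposition \ref{PROP:COH}, the stability argument giving $h^0(F^*)=0$, and the Riemann--Roch computation $k\leq h^1(F^*)=d+(r-k)(p_a(C)-1)$ are exactly the paper's endgame). The problem is that your ``Key Claim'' --- the only place where hypotheses (a)/(b) and the local freeness of $E$ actually enter --- is named but not proved: you list ingredients and defer to ``the small-slope lemma of \cite{BGN97}'' under (a) and to ``reassembling'' component-wise statements under (b), and the two concrete mechanisms you do propose fail. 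First, $H^0(\ker\ev_V)=0$ together with global generation of the image is not enough: in Example \ref{EX:noBGN} the kernel of the evaluation map (essentially the ideal sheaf of $C_1$, a negative-degree line bundle supported on $C_2$) also has no global sections, yet $\ev_V$ is not injective, precisely because a section vanishes identically on a component. Second, the route ``$\Delta_{\w}(E)=\Delta_{\w}(V\otimes\OO_C)=0$, hence deduce $\Delta_{\w}(F)=0$'' has no visible mechanism: $\wdeg$ is additive on short exact sequences but the sum $\sum_i\deg(\,\cdot\,_i)$ of degrees of restrictions modulo torsion is not, so $\Delta_{\w}$ is not additive and nothing forces $\Delta_{\w}(F)=0$ from the two vanishings.

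What is needed (and what the paper proves) is the pointwise statement that every nonzero $s\in V$ is nowhere vanishing on $C$, in two steps. (i) On any component $C_i$ with $s_i=s|_{C_i}\not\equiv 0$, the line subsheaf $\cL_i\subseteq E_i$ generated by $s_i$ would have $\deg(\cL_i)\geq 1$ if $s_i$ had a zero; under (a), goodness of $\w$ gives $\wdeg(\cL)\geq\sum_i\deg(\cL_i)\geq 1$ and $\wrank(\cL)\leq 1$ for the subsheaf $\cL\subseteq E$ generated by $s$, so $\mu_{\w}(\cL)\geq 1\geq\mu_{\w}(E)$, contradicting $\w$-stability; under (b), $\mu(\cL_i)\geq 1\geq\mu(E_i)$ contradicts stability of $E_i$. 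Hence $\cL_i\simeq\OO_{C_i}$ and $s_i$ has no zeros. (ii) $s$ cannot vanish identically on any component: otherwise, by connectedness, there are components $C_i,C_j$ meeting at a node $p$ with $s_i\not\equiv 0$ and $s_j\equiv 0$, and local freeness of $E$ at $p$ forces $s_i(p)=s_j(p)=0$, contradicting (i). Step (ii) is exactly where local freeness of $E$ is used and exactly what fails in Example \ref{EX:noBGN}; your proposal never confronts it, and a naive ``restrict to each component and reassemble'' argument cannot see it, since a section may well be nowhere vanishing on the components where it is nonzero and still vanish identically elsewhere. Until you supply (i) and (ii) (or an equivalent), the proof has a genuine gap at its central step; the remainder of your write-up ($k<r$, $d>0$, and the numerical bound) is correct and agrees with the paper.
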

\begin{proof}
Let $E \in \Bw( r\cdot \underline{1},d,k)$ be a $\w$-stable locally free  sheaf. Let $V \subseteq H^0(E)$ be a subspace of dimension $k$.  
We claim that the evaluation map  $\ev_V \colon V \otimes \OO_C \to E$ is  an injective map of vector bundles. 
Since the map induced on the fibers at the point $x \in C$ is the map sending $(s,x) \mapsto s(x)$, it is enough to verify that $s(x)\not= 0 $ for any non-zero $s\in V$ and for any  $x \in C$.
Let $s \in V$ be a non-zero section. We consider the map: $${\ev_s} \colon = {\ev_V}|_{\langle s\rangle\otimes \cO_C}: \langle s\rangle \otimes  \cO_C \to E,$$
and let $\mathcal L$ be its image. It is a depth one subsheaf of $E$ which is globally generated by construction. We denote by $\cL_i$ its restriction to $C_i$ modulo torsion. If $\cL_i$ is not the zero sheaf, then, by \cite[Lemma 3.3]{BFCoh}, we have the following commutative diagramm
$$
\xymatrix{
\langle s\rangle \otimes \OO_C \ar@{->>}[r]^-{ev_s} \ar@{->>}[d] & {\mathcal L} \ar@{->>}[d]  \\
\langle s_i\rangle \otimes \OO_{C_i} \ar@{->>}[r]^-{ev_{V,C_i}} & {\mathcal L}_i}
$$
from which we deduce that $\cL_i$ is a line bundle generated by $\langle s_i\rangle$ where $s_i=s|_{C_i}$. In this case, we have $\deg(\cL_i) \geq 0$ with $\deg(\cL_i)=0$ if and only if $\cL_i=\cO_{C_i}$. We prove  that if $\cL_i\neq 0$ then  we have $s_i(x) \neq0$ for any $x\in C_i$, i.e. $\cL_i= \OO_{C_i}$. On the contrary, we would have $\deg(\cL_i) \geq 1$. We claim this can not happen.
Indeed, if we are in case $(a)$, since $\w$ is good, by Equation \eqref{EQ:DELTA}, we would have
$$\wdeg(\cL) \geq \sum_{i=1}^{\gamma}\deg(\mathcal L_i) \geq 1  \ \ \text{and} \ \ \wrank(\mathcal L) \leq \sum_{i=1}^{\gamma}w_i \leq 1,$$
so $\mu_{\w}(\mathcal L) \geq 1$. This is impossible since $E$ is $\w$-stable with slope $\mu_{\w}(E) \leq 1$. Instead, if we are in case $(b)$, $\cL_i$ is a subsheaf of $E_i$ with $\mu(\cL_i)\geq 1$, which contradicts the assumption on the stability of $E_i$.
\medskip

Finally, we prove that ${\mathcal L}_i \simeq \OO_{C_i}$ for any $i$. 
Assume, by contradiction, that the restriction of $s$ to at least one   component of $C$ is identically zero. Then we can find two different components $C_i$ and $C_j$ such that $p\in C_i\cap C_j$, $s_i\not\equiv 0$, $s_j\equiv 0$. Then, since $E$ is locally free and $s$ is a global section of $E$, we would have $s_i(p)=s_j(p)=0$. But we have shown above that $s_i$ cannot have zeros since it is a section of $\cL_i=\cO_{C_i}$.
\medskip

We have shown that $\ev_V$ is an injective map of vector bundles. The $\w$-stability of $E$ implies that $\ev_V$is not an isomorphism and that $d= \wdeg(E)>0$. Moreover, we have an exact sequence
$$0\to V\otimes \cO_C\to E\to F\to 0$$
with $F$ locally free of rank $r-k\geq 1$. In particular we have $k<r$ as claimed. By Proposition \ref{PROP:COH}$(b)$, $(E,V)$ is an $\wa$-stable coherent system for $\alpha$ small enough. Then, by \cite[Lemma 3.12]{BFBGN}, we have that the above exact sequence is a BGN extension. By \cite[Proposition 2.3]{BFBGN} and \cite[Lemma 1.8]{BFBGN} we have $h^0(F^*) = 0$ and $k\leq h^1(F^*)=d+(r-k)(p_a(C)-1)$. This is equivalent to $r\leq d+(r-k)p_a(C)$.
\end{proof}

\begin{example}
\label{EX:noBGN}
Let $C_1$ and $C_2$ be  smooth curves  of genus $g_1$ and $g_2$, respectively, such that $3\leq g_2<g_1$. Let $C$ be the nodal curve obtained by gluing $C_1$ and $C_2$ at the points $q_1$ and $q_2$; we denote by $p$ the node of $C$. Under these assumptions we have the following facts:
\begin{itemize}
\item the moduli space $\cU_{C_i}(2,1)$ has dimension $4g_i-3$;
\item the Brill-Noether locus $\cB_{C_i}(2,1,1)$ is non-empty, it is irreducible and smooth and has dimension $2g_i-1$, so it is a proper subvariety of $\cU_{C_i}^s(2,1)$. This is a consequence of \cite[Theorems A+B]{BGN97};
\item the locus 
$$Y_i:=\{F\in \cU_{C_i}^s(2,1)\,|\, \exists\, L\in \Pic^0(C_i) \mbox{ s.t. } h^0(F\otimes L)\geq 1\}$$
is a proper closed subscheme of $\cU_{C_i}^s(2,1)$. Indeed, one can show that $Y_i$ has dimension at most $\dim(\cB_{C_i}(2,1,1))+\dim(\Pic^0(C_i))$.
\end{itemize}
We consider $E_1 \in \cB_{C_1}(2,1,1)$ and $E_2\in \cU_{C_2}^s(2,1)\setminus (Y_2\cup \cB_{C_2}(2,1,1))$. Then\footnote{We recall that a vector bundle $F$
on a smooth curve is $(m,n)$-semistable (respectively $(m,n)$-stable) if, for any  subsheaf $G$ of $F$, we have $\frac{\deg(G)+m}{\rank(G)}\leq \frac{\deg(F)+m-n}{\rank(F)}$ (respectively $<$). For details see, \cite{NR}.} $E_1$ is $(0,1)$-semistable and $E_2$ is $(0,2)$-stable. Since $\cB_{C_1}(2,1,1)$ is smooth, we have $h^0(E_1)=1$ and $E_1$ is given by a BGN extension
$$0\to \cO_{C_1}\to E_1\to L\to 0$$
where $L\in\Pic^1(C_1)$. Let $s$ be a generator of $H^0(E_1)$; notice that $s$ does not have any zero by construction. We consider a linear map $\sigma$ between the fibers of $E_1$ and $E_2$ at the points $q_1$ and $q_2$, respectively, such that $\ker(\sigma)=\langle s(q_1)\rangle$. Then, following \cite{BFVec}, we can construct a depth one sheaf $E$ on $C$ which fits into the exact sequence 
\begin{equation}
\label{EQ:SESDEFE}
0 \to E \to E_1 \oplus E_2 \to \bC_p^2 \to 0.
\end{equation}
This, roughly speaking, can be done by gluing the fibers of $E_1 $ and $E_2$ at the points $q_1$ and $q_2$ according to $\sigma$ (see \cite[Definition 3.1]{BFVec}).
By \cite[Proposition 3.2]{BFVec} we have that $E$ is a sheaf with multirank $(2,2)$, $\chi(E)=4-2g_1-2g_2$ and it is not locally free. We fix the canonical polarization $\underline{\eta}$ on the curve $C$ and we observe that it is good as $C$ is of compact type by \cite[Proposition 2.8]{BFPol}). 
Then, we have $\deg_{\underline{\eta}}(E)=2$ and $\mu_{\underline{\eta}}(E)=1$. One can  show that $\underline{\eta}$ satisfies the stability conditions \cite[Equation (3.3)]{BFVec} since we are assuming $g_1>g_2$. Then, \cite[Proposition 3.6]{BFVec} guarantees that $E$ is $\underline{\eta}$-stable. Finally, from the exact sequence \eqref{EQ:SESDEFE}, as a consequence of our choice of $\sigma$, we have that $H^0(E)\simeq   H^0(E_1)\oplus H^0(E_2)\simeq H^0(E_1)$. 
So we can conclude that $E \in \cB_{(C,\underline{\eta})}(2\cdot \underline{1},2,1)$ and any global section of $E$ vanishes on $C_2$: this implies that $\ev:H^0(E)\otimes \cO_C\to E$ is not injective so $E$ can not be obtained as BGN extension.
\end{example}

\subsection{Constructing irreducible components of Brill-Noether loci via BGN extensions}
We would like to describe irreducible components of Brill-Noether loci of locally free sheaves with  small slopes,   using BGN extensions defining $\w$-stable depth one sheaves.
\medskip

From now on, we will assume that $(C,\w)$ is a polarized nodal curve of compact type with $\w$ good and with $\gamma$ smooth irreducible components of genus $g_i \geq 2$. We give an ordering $\{C_1, \dots, C_{\gamma}\}$ for the irreducible components of $C$ and we define the family of subcurves $\{A_j \}_{j= 1,\dots, \gamma -1}$ according to Lemma \ref{LEM:ORDER}.
Let $\Uw(s \cdot \underline{1},d)$ be the moduli space of $\w$-semistable depth one sheaves  with multirank $s \cdot \underline{1}$ and $\w$-degree $d$. 
The following result summarizes some technical conditions on $\w$-stability:

\begin{lemma}
\label{LEM:STARCONDITIONS}
In the above hypothesis, we have the following properties.
\begin{enumerate}[(a)]
\item{} Let $F$ be a locally free sheaf of rank $s$ and degree $d$ whose restrictions $F_i$ are stable with degree $d_i$. If  the  following conditions hold:
\begin{equation}
\label{EQ:STARJ}
    (\star)_j\,:\qquad \wrank(\cO_{A_j})d-s\Delta_{\w}(\cO_{A_j})<\sum_{\substack{C_i\subseteq A_j}}d_i<\wrank(\cO_{A_j})d+ s(1-\Delta_{\w}(\cO_{A_j}))
\end{equation}
for $j=1,\dots, \gamma-1$, then $F$ is $\w$-stable. Conversely, a  general element of $\Uw(s\cdot \underline{1},d)$ is locally free, has stable restrictions of degree $d_i$ satisfying the above conditions.

\item Irreducible components of the moduli space 
$\Uw(s \cdot {\underline 1},d)$ correspond to $\gamma$-uples $(d_1,\dots,d_{\gamma})\in \bZ^{\gamma}$ with $\sum_{i=1}^{\gamma}d_i= d$ and which satisfy condition $(\star)_j$ for $j=1,\dots, \gamma-1$.

\item If $F$ satisfies condition $(\star)_j$ for $j=1,\dots, \gamma-1$ for a polarization $\w$, then the same holds for any polarization $\w'$ in a neighborhood of $\w$. If $\w$ is good then $\w'$ is good too in a suitable neighborhood.
\end{enumerate}
\end{lemma}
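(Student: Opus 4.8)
The plan is to treat the three parts in turn, with the bulk of the work going into the sufficiency of $(\star)_j$ in (a); the converse direction of (a) and part (b) then follow from the description of $\Uw(s\cdot\underline 1,d)$ recalled above (from \cite{Tei91,Tei95}), and (c) is a routine openness statement.

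For the sufficiency in (a), the starting point is the observation that $(\star)_j$ is nothing but the pair of slope inequalities for the two ``elementary'' subsheaves attached to the node $p_j$. Given $F$ locally free of multirank $s\cdot\underline 1$ and $\wdeg=d$, for any subcurve $B$ with complement $B^c$ I set $F_B=\ker(F\to F|_{B^c})$, the subsheaf of sections vanishing on $B^c$; it has multirank $s$ on the components of $B$ and $0$ elsewhere, so $\wrank(F_B)=s\,\wrank(\cO_B)$ and $\mu_{\w}(F_B)<\mu_{\w}(F)=d/s$ reads $\wdeg(F_B)<d\,\wrank(\cO_B)$. Taking $B=A_j$, the sequence $0\to F_{A_j}\to F\to F|_{A_j^c}\to 0$ and the fact that on compact type $A_j$ meets $A_j^c$ in the single node $p_j$ express $\chi(F_{A_j})$ through $\chi(F)$, $\sum_{C_i\subseteq A_j}d_i$ and $p_a(A_j^c)$, the only node contribution being the twist by $-p_j$ on the component through $p_j$ (which costs $s$ in degree). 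Using $p_a(C)=p_a(A_j)+p_a(A_j^c)$ and the identity
\[
\Delta_{\w}(\cO_{A_j})+\Delta_{\w}(\cO_{A_j^c})=1,
\]
which follows from $\sum_i w_i=1$ together with the additivity of the arithmetic genus, a direct computation turns $\mu_{\w}(F_{A_j})<d/s$ into the right-hand inequality of $(\star)_j$ and the analogous inequality $\mu_{\w}(F_{A_j^c})<d/s$ into the left-hand one. Thus $(\star)_j$ for all $j$ is equivalent to $\mu_{\w}(F_B)<\mu_{\w}(F)$ for every subcurve $B$ with $B$ and $B^c$ connected.

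It then remains --- and this is the real obstacle --- to show that these ``connected-subcurve'' inequalities, together with the stability of each restriction $F_i$, already force $\mu_{\w}(G)<\mu_{\w}(F)$ for every saturated proper subsheaf $G$ (so that $F/G$ is depth one). I would argue as follows. Writing $\rho_i=\rank(G_i)$ and $e_i=\deg(G_i)$, on each component with $0<\rho_i<s$ the stability of $F_i$ gives $s\,e_i<\rho_i d_i$, while on the components where $G$ has full rank $s$ the subsheaf is, up to a bounded node twist, of the $F_B$-type. Splitting $\wdeg(G)=\sum_i e_i+\Delta_{\w}(G)$ and controlling the defect $\Delta_{\w}(G)$ through the local types of $G\subseteq F$ at the nodes, one combines the component-wise bounds with the connected-subcurve inequalities; here the tree structure of a compact-type curve is essential, as it guarantees that the full-rank locus of $G$ and its complement are cut along single nodes, so that every relevant subcurve reduces to some $A_j$ or $A_j^c$. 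This mixed-rank bookkeeping, and in particular the uniform control of $\Delta_{\w}(G)$ for good $\w$, is the technical heart of the argument.

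The converse in (a) and part (b) follow from the recalled structure of $\Uw(s\cdot\underline 1,d)$. By \cite{Tei91,Tei95} the general member of each irreducible component is locally free with stable restrictions $F_i$, and for such a sheaf $\w$-stability is necessary, so the inequalities $\mu_{\w}(F_{A_j})<\mu_{\w}(F)$, i.e.\ $(\star)_j$, must hold; this proves the converse of (a) and shows that a component determines a multidegree $(d_1,\dots,d_\gamma)$ with $\sum_i d_i=d$ satisfying $(\star)_j$. Conversely, fixing such a multidegree, the locus of sheaves obtained by gluing stable bundles $F_i$ of degree $d_i$ along the nodes is irreducible (the moduli of stable bundles on each smooth $C_i$ is irreducible and the gluing data form an irreducible variety), its general member is $\w$-stable by the sufficiency in (a), and its closure is therefore an irreducible component; distinct multidegrees give distinct components, which yields the bijection in (b). Finally, (c) is an openness statement: $\wrank(\cO_{A_j})$ and $\Delta_{\w}(\cO_{A_j})=\wdeg(\cO_{A_j})$ are affine-linear in $\w$, so each $(\star)_j$ is a strict inequality between continuous functions of $\w$ and persists for all $\w'$ near $\w$; since goodness of a polarization is itself open by \cite{BFPol}, shrinking the neighborhood keeps $\w'$ good.
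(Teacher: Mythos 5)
Your reformulation of $(\star)_j$ is correct and is a nice way to see what the conditions mean: for $B=A_j$ or $A_j^c$ the subsheaf $F_B=\ker(F\to F|_{B^c})$ has $\wrank(F_B)=s\,\wrank(\cO_B)$, and using $\Delta_{\w}(\cO_{A_j})+\Delta_{\w}(\cO_{A_j^c})=1$ the two slope inequalities $\mu_{\w}(F_{A_j})<\mu_{\w}(F)$ and $\mu_{\w}(F_{A_j^c})<\mu_{\w}(F)$ unwind exactly to the two sides of \eqref{EQ:STARJ}. This gives necessity, and your treatment of (c) (affine-linearity of $\wrank(\cO_{A_j})$ and $\Delta_{\w}(\cO_{A_j})$ in $\w$, plus openness of goodness from \cite{BFPol}) matches the paper's argument.

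However, there is a genuine gap at exactly the point you yourself flag as ``the real obstacle'': the claim that the connected-subcurve inequalities together with stability of each restriction $F_i$ force $\mu_{\w}(G)<\mu_{\w}(F)$ for \emph{every} proper subsheaf $G$ of mixed multirank. You describe a strategy (split $\wdeg(G)=\sum_i\deg(G_i)+\Delta_{\w}(G)$, control $\Delta_{\w}(G)$ via local types at the nodes, reduce subcurves to the $A_j$ using the tree structure) but never carry it out, and this step is not routine: it is the main theorem of \cite{Tei91}, whose proof occupies most of that paper and proceeds by a careful induction on the tree together with an analysis of the stalks of $G$ and of the torsion of $F/G$ at the nodes. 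The same citation also underlies your sketch of part (b), since irreducibility of the gluing locus and the fact that its closure is a whole component are again Teixidor's results. The paper's own proof makes no attempt to reprove any of this: it quotes (a) and (b) as the main results of \cite{Tei91} and its only real content is the translation of Teixidor's inequalities, written there in terms of Euler characteristics $\chi(F)$, $\chi(F_i)$, into the form \eqref{EQ:STARJ} via the identity $\Delta_{\w}(\cO_{A_j})=1-\sum_{C_i\subseteq A_j}g_i-\wrank(\cO_{A_j})(1-p_a(C))$ (a computation essentially identical to the one you did for $F_{A_j}$), plus the explicit perturbation estimate for (c). So your proposal either needs to invoke \cite{Tei91} at that point, or to supply the full mixed-rank argument; as written, the sufficiency in (a) is asserted rather than proved.
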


\begin{proof}
$(a)$ and $(b)$ are the main results of \cite{Tei91}.
We only need to prove that the stability conditions can be expressed as in Equation \eqref{EQ:STARJ} using the language and the notations introduced in \cite{BFPol}. The conditions of \cite{Tei91} are
 $$\wrank(\cO_{A_j})\chi(F)-\sum_{\substack{C_i\subseteq A_j \\ i\ne j}}\chi(F_i) + s(a_j-1) < \chi(F_j)  <
\wrank(\cO_{A_i})\chi(F)-\sum_{\substack{C_i\subseteq A_j \\ i\ne j}}\chi(F_i) + sa_j,$$
where $\{ A_j \}_{j=1,\dots \gamma -1}$ are subcurves that satisfy the requests of Lemma \ref{LEM:ORDER} and $a_j$ is the number of irreducible components of $A_j$.
Using the equalities
$\chi(F)= d +s(1-p_a(C))$ and $\chi(F_i) = d_i + s(1 -g_i)$ we obtain:
$$\wrank(\cO_{A_j})d-s\left[1-\sum_{C_i \subseteq A_J} g_i - \wrank(\cO_{A_j})(1-p_a(C))\right] < \sum_{\substack{C_i\subseteq A_j}} d_i  <$$
$$ < \wrank(\cO_{A_i})d + s\left[\sum_{C_i\subseteq A_j}g_i + \wrank(\cO_{A_j})(1-p_a(C))\right].$$
By the definition of the $\Delta_{\w}$ function (see \eqref{EQ:DELTA}) we have:
$$\Delta_{\w}(\cO_{A_j})= \wdeg( \cO_{A_j})= \chi(\cO_{A_j}) - \wrank(\cO_{A_j})\chi(\cO_{C})=
1 - \sum_{C_i \subseteq A_j}g_i- \wrank(\cO_{A_j})(1 - p_a(C)),$$
which implies \eqref{EQ:STARJ}.
\hfill\par

$(c)$ Let $\w'$ be a polarization and set 
$\underline{\epsilon}=(\epsilon_1,\dots,\epsilon_{\gamma})=\w'- \w$.
Note that, by construction, $\sum_{i=1}^{\gamma}\epsilon_i= 0$.
Assume that $(\star)_j$ holds for $\w$ for any $j=1,\dots \gamma -1$, we prove that if   $\underline{\epsilon}$ is sufficiently small, then $(\star)_j$ holds for $\w'$ for any $j=1,\dots \gamma -1$.
In fact we have
$$ {\rm rk}_{\w'}(\cO_{A_j})= \wrank( \cO_{A_j})+ \sum_{C_i \subseteq A_j} \epsilon_i \quad \mbox{ and }\quad \Delta_{\w'}(\cO_{A_j}) =\Delta_{\w}(\cO_{A_j}) - \chi(\cO_C)\sum_{C_i \subseteq A_j} \epsilon_i.$$
Condition $(\star)_j$ for $\w'$  is the following:
$$
\wrank(\cO_{A_j})d-s\Delta_{\w}(\cO_{A_j}) + (d +s \chi(\cO_C)) \sum_{C_i \subseteq A_j}\epsilon_i  <\sum_{\substack{C_i\subseteq A_j}}d_i<
$$
$$ < \wrank(\cO_{A_j})d+ s(1-\Delta_{\w}(\cO_{A_j})) + (d + s \chi(\cO_C)) \sum_{C_i \subseteq A_j}\epsilon_i, 
$$
hence it holds for $\epsilon_i$ sufficiently small. 

Finally,  as being good is an open condition (\cite[Corollary 3.15]{BFPol}), if $||\underline{\epsilon}||$ is small enough we have that $\w'$ is a good polarization too. 
\vspace{2mm}
\end{proof}

In the sequel, we will denote by $X_{d_1,\dots,d_{\gamma}}$ the irreducible component of $\Uw(s \cdot {\underline 1},d)$ corresponding to the $\gamma$-uple $(d_1,\dots,d_{\gamma})$ according to the above Lemma.

\begin{remark}
\label{REM:canonicalpol}
\rm
Let $\underline{\eta}$ be the canonical polarization on $C$, i.e. the polarization induced by $\omega_C$. As $C$ is a stable curve with $p_a(C)\geq 2$, $\underline{\eta}$ is good (see \cite[2.8]{BFPol}). 
We claim that the condition $(\star)_j$ for the canonical polarization can be written as follows:
\begin{equation}
\label{EQ:STARJcanonical}
    {\rm rk}_{\underline{\eta}}(\cO_{A_j})d-s/2<\sum_{\substack{C_i\subseteq A_j}}d_i<{\rm rk}_{\underline{\eta}}(\cO_{A_j})d+s/2.
\end{equation}
Indeed, by \cite[2.8]{BFPol}, it follows that for any subcurve $B$ of $C$ we have $\Delta_{\underline{\eta}}(\cO_B)=B\cdot B^c/2$.
Since the curves $A_j$ satisfy the requests of  Lemma \ref{LEM:ORDER}, we have $\Delta_{\underline{\eta}}(\cO_{A_j})=1/2$ and this gives the claim.
\end{remark}

Let $d >0$ and $0<k<r$ be integers. 
We recall that in Subsection \ref{Subsec:depthone} we have denoted by $\GwL(r\cdot\underline{1},d,k)$ the terminal moduli space for coherent systems of multitype $(r\cdot{\underline 1},d,k)$ on $C$. 
By \cite[Theorem 5.1]{BFBGN} each non-empty irreducible component $Y_{d_1,\dots,d_{\gamma}}$ of this space has dimension equal to the Brill-Noether number $\beta_{C}(r,d,k)$ (see \eqref{BrillNoetherNumber}) and its general element is a  pair $(E,V)$  with $E$ locally free and  $\deg(E_i)= d_i$.
The following proposition gives sufficient conditions for the $\w$-stability of $E$.

\begin{proposition}
\label{PROP:Estab}
Let $(C,\w)$ be a polarized nodal curve of compact type with $\w$ good. Let $r, d$ and $k$ be as above and consider a non-empty irreducible component $Y_{d_1,\dots,d_{\gamma}} \subset \GwL(r\cdot\underline{1},d,k)$ with $0 < d_i \leq r$ for any $i=1,\dots \gamma$. Assume moreover that
\begin{equation}
\label{INEQ:BGN_Ci}
k \leq \frac{d_i + r(g_i-1)}{g_i} \qquad \mbox{ for all } i = 1,\dots,\gamma.
\end{equation}
Then $E$ is  $\w$-stable for a general element $(E,V) \in Y_{d_1,\dots,d_{\gamma}}$. 
\end{proposition}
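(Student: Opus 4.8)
The plan is to deduce the $\w$-stability of $E$ from Lemma \ref{LEM:STARCONDITIONS}(a). For a locally free sheaf of multirank $r\cdot\underline{1}$ and degree $d$ whose restrictions $E_i$ are stable of degree $d_i$, that lemma reduces $\w$-stability to the numerical conditions $(\star)_j$ of \eqref{EQ:STARJ} taken with $s=r$. Local freeness of the general $E\in Y_{d_1,\dots,d_\gamma}$ is part of the description of that component recalled before the statement, so only two things remain: the stability of each restriction $E_i$, and the validity of $(\star)_j$ for $s=r$.

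For $(\star)_j$ I would exploit the BGN structure. By \cite[Theorem 5.1]{BFBGN} a general $(E,V)\in Y_{d_1,\dots,d_\gamma}$ is a BGN extension
$$0 \to V\otimes\cO_C \to E \to F \to 0,$$
whose quotient $F$ is a general element of the component $X_{d_1,\dots,d_\gamma}$ of $\Uw((r-k)\cdot\underline{1},d)$; in particular $F$ is $\w$-stable, locally free, with stable restrictions $F_i$ of degree $d_i$ (here $\deg(F_i)=d_i$ since $V\otimes\cO_C$ restricts to $\cO_{C_i}^{\,k}$). By Lemma \ref{LEM:STARCONDITIONS}(b) applied with $s=r-k$, the tuple $(d_1,\dots,d_\gamma)$ then satisfies $(\star)_j$ for the smaller rank, that is
$$\wrank(\cO_{A_j})\,d-(r-k)\Delta_{\w}(\cO_{A_j}) < \sum_{C_i\subseteq A_j}d_i < \wrank(\cO_{A_j})\,d+(r-k)\bigl(1-\Delta_{\w}(\cO_{A_j})\bigr)$$
for every $j$. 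I would then record the elementary fact that, since $\w$ is good and $A_j\cap A_j^c=\{p_j\}$ is a single node, the sequence $0\to\cO_C\to\cO_{A_j}\oplus\cO_{A_j^c}\to\bC_{p_j}\to 0$ forces $\Delta_{\w}(\cO_{A_j})+\Delta_{\w}(\cO_{A_j^c})=1$, whence $0<\Delta_{\w}(\cO_{A_j})<1$. Because $r>r-k$, we get $r\,\Delta_{\w}(\cO_{A_j})>(r-k)\Delta_{\w}(\cO_{A_j})$ and $r\bigl(1-\Delta_{\w}(\cO_{A_j})\bigr)>(r-k)\bigl(1-\Delta_{\w}(\cO_{A_j})\bigr)$, so the window defining $(\star)_j$ for $s=r$ strictly contains the one for $s=r-k$. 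Hence $(\star)_j$ holds for $E$ as well. This monotonicity — the stability slack of the lower-rank quotient $F$ forcing the wider stability window for the trivial extension $E$ — is the conceptual core of the argument.

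It remains to prove that the restrictions $E_i$ are stable for the general element. Since $F$ is locally free, $\ev_V\colon V\otimes\cO_C\to E$ is a subbundle inclusion, so restricting the extension to $C_i$ stays exact, giving
$$0\to\cO_{C_i}^{\,k}\to E_i\to F_i\to 0$$
with $\dim V_i=k$ and $F_i$ stable of degree $d_i$. Thus $(E_i,V_i)$ is a BGN extension on the smooth curve $C_i$ of type $(r,d_i,k)$ in the small-slope range $0<d_i\le r$, and hypothesis \eqref{INEQ:BGN_Ci} is precisely the Brill--Noether inequality $kg_i\le r(g_i-1)+d_i$. By Theorems $A+B$ of \cite{BGN97} (together with $k<r$, so $(d_i,k)\ne(r,r)$), a general such extension is a stable bundle; as stability is open and both $F_i$ and the extension class are general, $E_i$ is stable for a general $(E,V)$. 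Feeding local freeness, the stability of all the $E_i$, and $(\star)_j$ into Lemma \ref{LEM:STARCONDITIONS}(a) then yields the $\w$-stability of $E$.

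I expect the last step to be the main obstacle: it requires that restriction to $C_i$ preserve both the dimension of the space of sections and the stability of the quotient, followed by a genuine appeal to smooth Brill--Noether theory through \eqref{INEQ:BGN_Ci}; moreover one must check that a single general $(E,V)$ has all $\gamma$ restrictions stable simultaneously (rather than each for a possibly different general member), which is where the generality built into \cite[Theorem 5.1]{BFBGN} is essential.
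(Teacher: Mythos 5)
Your overall route is the paper's own: reduce $\w$-stability of $E$ to Lemma \ref{LEM:STARCONDITIONS}(a), check the conditions $(\star)_j$ for $s=r$ by comparing with the conditions for the rank-$(r-k)$ quotient $F$, and get stability of the restrictions $E_i$ from smooth Brill--Noether theory. The first half of your argument is complete and correct: the strict inclusion of the rank-$(r-k)$ window inside the rank-$r$ window via $0<\Delta_{\w}(\cO_{A_j})<1$ is exactly the paper's Claim (a), and your self-contained derivation of $\Delta_{\w}(\cO_{A_j})+\Delta_{\w}(\cO_{A_j^c})=1$ from goodness is a valid substitute for the paper's citation of \cite[Proposition 2.12]{BFPol}.

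The gap is in the stability of the restrictions (the paper's Claim (b)), precisely the step you flag as ``the main obstacle'' but do not close. You assert that ``both $F_i$ and the extension class are general'', yet the generality of the \emph{restricted} class in $\Gr(k,H^1(F_i^*))$ is what has to be proved, and \cite[Theorem 5.1]{BFBGN} (your only tool at this point) produces the fibration $\psi$ but says nothing about restricting extension classes. A priori the restricted classes $e_1|_{C_i},\dots,e_k|_{C_i}$ could lie in a proper closed subset for every choice of $(E,V)$, or even become linearly dependent --- in which case $E_i$ admits $\cO_{C_i}$ as a quotient and, since $d_i>0$, is not even semistable, so $(E_i,V_i)$ is not a BGN extension on $C_i$ at all and \cite{BGN97} does not apply to it. The paper fills this with two inputs your argument lacks: \cite[Corollary 5.5]{BFBGN}, which under hypothesis \eqref{INEQ:BGN_Ci} guarantees that for general $(E,V)\in\psi^{-1}(F)$ the restriction $(E_i,V_i)$ lies in $\mathcal{G}_{C_i,L}(r,d_i,k)$, and \cite[Proposition 3.4]{BFBGN}, which gives a dominant rational map $T_i\colon \Gr(k,H^1(F^*))\dashrightarrow \Gr(k,H^1(F_i^*))$ induced by restriction, so that generality on $C$ actually descends to generality on each $C_i$. (Concretely: the surjectivity of $H^1(F^*)\to H^1(F_i^*)$ is automatic on a curve, while \eqref{INEQ:BGN_Ci}, equivalently $k\le h^1(F_i^*)$, is exactly what makes $T_i$ defined at the general point.) Once the $T_i$ are available, the simultaneity issue you raise is immediate: one intersects the finitely many dense open sets $T_i^{-1}(U_i)$, where $U_i\subset\psi_i^{-1}(F_i)$ is the locus of stable $E_i$, which is how the paper concludes.
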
 

\begin{proof}
Let $Y_{d_1,\dots,d_{\gamma}}$  be a non-empty irreducible component of $\GwL(r\cdot\underline{1},d,k)$.
By \cite[Theorem 5.1(b)]{BFBGN}, there exists an irreducible component 
$X_{d_1,\dots,d_{\gamma}}$ of the moduli space $\U_{(C,\w)}((r-k)\cdot \underline{1},d)$
and a dominant morphism  
$$\psi \colon Y_{d_1,\dots,d_{\gamma}} \to X_{d_1,\dots, d_\gamma}\qquad (E,V)\mapsto \coker(ev_V),$$
where $ev_V$ is the evaluation map of global sections of $V$. The fiber over a $\w$-stable sheaf $F$ is isomorphic to $\Gr(k,H^1(F^*))$. More precisely, in
\cite[Proposition 3.3]{BFBGN} it is shown that $\Gr(k,H^1(F^*))$ parametrises BGN extensions of $F$ of type $(r,d,k)$ (see Subsection \ref{Subsec:depthone}). The isomorphism takes $\underline{e}\in \Gr(k,H^1(F^*))$ to the coherent system $(E,V)$ induced by the BGN extension
\begin{equation}
\label{SES:BGN}  
\underline{e}:\qquad 0\to V\otimes \cO_{C}\to E\to F\to 0
\end{equation}
defined by $\underline{e}$.

By Lemma \ref{LEM:STARCONDITIONS}(a), a  general $F\in X_{d_1,\dots,d_{\gamma}}$ is locally free, $\w$-stable, each restriction $F_i$ is stable of degree $d_i$ and conditions $(\star_j)$ holds, for $j=1,\dots,\gamma-1$. 

\vspace{2mm}

{\bf Claim (a)}: for a general $F\in X_{d_1,\dots,d_{\gamma}}$ and for any $(E,V)\in \psi^{-1}(F)$, we have that $E$ is locally free and satisfies $(\star)_j$ for all $j=1,\dots \gamma-1$.
\vspace{2mm}

Since $F$ is locally free we have that $E$ is locally free too (by \cite[Proposition 3.3(a)]{BFBGN}) and, by tensoring by $\cO_{C_i}$ the exact sequence \eqref{SES:BGN}, we get again an exact sequence. The latter yields $\deg(E_i)=\deg(F_i)= d_i$. Since $F$ satisfies Condition $(\star_j)$, 
we have
$$
\sum_{C_i \subseteq A_j} d_i > 
\wrank(\cO_{A_j})d -(r-k)(\Delta_{\w}(\cO_{A_j}) = \wrank(\cO_{A_j})d -r\Delta_{\w}(\cO_{A_j}) + k \Delta_{\w}(\cO_{A_j}),$$
$$
\sum_{C_i \subseteq A_j} d_i < \wrank(\cO_{A_j})d +(r-k)(1 - \Delta_{\w}(\cO_{A_j})) = 
\wrank(\cO_{A_j})d +r (1 - \Delta_{\w}(\cO_{A_j})) +k (\Delta_{\w}(\cO_{A_j}) -1).$$
Now, we recall that  since $\w$ is a good polarization, then $\cO_C$ is $\w$-stable (by \cite[Theorem 2.9]{BFPol}).  By Lemma \ref{LEM:ORDER}(c), the intersection $A_j \cap A_j^c$  is a single node, so we have $0<\Delta_{\w} (\cO_{A_j})<1$ by \cite[Proposition 2.12]{BFPol}. This and the above inequalities imply that $E$ satisfies Condition $(\star)_j$, for $j=1,\dots, \gamma -1.$  
\medskip

{\bf Claim (b)}: for a general $F \in X_{d_1,\dots d_{\gamma}}$ and general $(E,V) \in \psi^{-1}(F)$, the restrictions $E_i$ are stable.
\hfill\par

Since $F$ is general we can assume that it is $\w$-stable. By Conditions \eqref{INEQ:BGN_Ci} and by \cite[Corollary 5.5]{BFBGN}, for a general $(E,V) \in \psi^{-1}(F)$ the restriction $(E_i,V_i)$ is an element of the moduli space ${\mathcal G}_{C_i,L}(r,d_i,k)$. 
Recall that, since $0 <d_i \leq  r$, elements of $\mathcal{G}_{C_i,L}(r,d_i,k)$ correspond to BGN extensions of semistable locally free sheaf. 
More precisely, there exists a dominant morphism 
$$\psi_i:\mathcal{G}_{C_i,L}(r,d_i,k)\to {\mathcal U}_{C_i}(r-k,d_i)$$
whose fiber over a stable $M$ is $\psi_i^{-1}(M)\simeq\Gr(k,H^1(M^*))$. Moreover, for $(G,W)$
general in $\mathcal{G}_{C_i,L}(r,d_i,k)$, we have that $G$ is stable. These assertions follows from \cite{BGN97} and \cite{BG02}. Then, in order to prove the claim, it is enough to show that $(E_i,V_i)$ is general in ${\mathcal G}_{C_i,L}(r,d_i,k)$.
\medskip

A general stable $F \in X_{d_1,\dots d_{\gamma}}$ is obtained, by the results of \cite{Tei91}, as follows: one first take, for all $i=1,\dots,\gamma$, a general $F_i\in \cU_{C_i}^s(r,d_i)$ and, for each node $p\in C_i\cap C_j$, one chose an isomorphism between the fibers of $F_i$ and $F_j$ at $p$. The sheaf $F$ is obtained by gluing $F_1, \dots F_{\gamma}$ along these fibers according to these choices. 

By \cite[Proposition 3.4]{BFBGN} we have a rational surjective map
$$
\xymatrix{
\Gr(k,H^1(F^*)) \simeq  \psi^{-1}(F) \ar@{-->}[r]^-{T_i} & \Gr(k,H^1(F_i^*))\simeq \psi_i^{-1}(F_i)
}$$
induced by restriction on $C_i$. This implies that for general $(E,V) \in \psi^{-1}(F)$, the restriction $(E_i,V_i)$ is defined by a BGN extension of $F_i$, i.e., 
$(E_i,V_i) \in \psi_{i}^{-1}(F_i)$. Let $U_i$ be the open dense subset of $\psi_i^{-1}(F_i)\simeq \Gr(k,H^1(F_i^*))$ corresponding to coherent systems $(E_i,V_i) \in {\mathcal G}_{C_i,L}(r,d_i,k)$ with $E_i$ stable. Then $\underline e  \in \bigcap_{i=1}^{\gamma}T_i^{-1}(U_i)$ corresponds to a coherent system $(E,V)$ with  $E_i$ stable for any $i=1,\dots \gamma$, as claimed.
\vspace{2mm}

Now we can conclude  the proof of the theorem. Let $F \in X_{d_1,\dots,d_\gamma}$ be a general $\w$-stable locally free sheaf and let $(E,V) $ be a general element in $\psi^{-1}(F)$. Then, by Claim (a), $E$ is locally free, it satisfies conditions $(\star)_j$ and, by Claim (b), its restrictions $E_i$ are stable. By Lemma \ref{LEM:STARCONDITIONS}$(a)$ it follows that $E$ is $\w$-stable. 
\end{proof}


We have now the second main result of this section.

\begin{theorem}
\label{THM:Main}
Let $(C,\w)$ be a polarized nodal curve of compact type with $\w$ good. Let $s,k\in \bN$ such that
$$\qquad k \leq 1 + s(g_i -1) \mbox{ for all } i=1,\dots, \gamma.$$ 
Assume that there exists a non-empty irreducible component $X_{d_1,\dots,d_\gamma}$ of $\Uw(s\cdot \underline{1},d)$ such that $0< d_i \leq s$ for $i=1,\dots, \gamma$. Then, if $r=s+k$, we have the following facts: 
\begin{itemize}
    \item there exists an irreducible component $Z$ of $\Bw(r\cdot \underline{1},d,k)$ with dimension $\beta_C(r,d,k)$;
    \item $Z$ is birational to a fibration over $X_{d_1,\dots,d_\gamma}$ in grassmannian varieties.
\end{itemize}
In particular, the Brill-Noether locus $\Bw(r\cdot \underline{1},d,k)$ is non-empty.
\end{theorem}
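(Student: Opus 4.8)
The plan is to build $Z$ as the image of the component $Y_{d_1,\dots,d_\gamma}\subseteq\GwL(r\cdot\underline 1,d,k)$ lying over $X_{d_1,\dots,d_\gamma}$ under the morphism $\psi$ of \cite[Theorem 5.1]{BFBGN}, and then to invoke the first main theorem (Theorem \ref{THM:genTHMb}) to see that this image already fills an entire irreducible component of $\Bw(r\cdot\underline 1,d,k)$. First I record that the numerical hypotheses are exactly what the earlier results need. Writing $r=s+k$, the assumption $0<d_i\le s$ gives $0<d_i\le r$, and $k\le 1+s(g_i-1)\le d_i+s(g_i-1)$ (using $d_i\ge 1$) is precisely inequality \eqref{INEQ:BGN_Ci} of Proposition \ref{PROP:Estab}. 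Setting $Y_{d_1,\dots,d_\gamma}=\psi^{-1}(X_{d_1,\dots,d_\gamma})$, its fibre over a general $\w$-stable $F$ is $\Gr(k,H^1(F^*))$ with $h^1(F^*)=d+s(p_a(C)-1)$; since $d=\sum_i d_i\ge\gamma\ge 1$ and $p_a(C)-1\ge g_i-1$, we get $h^1(F^*)\ge 1+s(g_i-1)\ge k$, so the fibre is non-empty. Hence $Y_{d_1,\dots,d_\gamma}$ is non-empty of dimension $\beta_C(r,d,k)$ by \cite[Theorem 5.1]{BFBGN}.

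By Proposition \ref{PROP:Estab} the general $(E,V)\in Y_{d_1,\dots,d_\gamma}$ has $E$ $\w$-stable and locally free, with stable restrictions $E_i$ of degree $d_i$; as $V\subseteq H^0(E)$ has dimension $k$, the assignment $(E,V)\mapsto[E]$ defines a dominant rational map $\phi\colon Y_{d_1,\dots,d_\gamma}\dashrightarrow\Bw(r\cdot\underline 1,d,k)$ on the dense open locus where $E$ is $\w$-stable. Let $Z$ be the closure of its image. Since $\Bw(r\cdot\underline 1,d,k)$ is closed in $\Uw^s(r\cdot\underline 1,d)$ (Proposition \ref{PROP:scheme}), $Z$ is an irreducible closed subset of $\Bw(r\cdot\underline 1,d,k)$; in particular $\Bw(r\cdot\underline 1,d,k)\ne\emptyset$, and $\dim Z\le\dim Y_{d_1,\dots,d_\gamma}=\beta_C(r,d,k)$.

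It remains to show that $Z$ is a whole component of the expected dimension. Let $W$ be the irreducible component of $\Bw(r\cdot\underline 1,d,k)$ containing $Z$. The locus $U\subseteq\Uw^s(r\cdot\underline 1,d)$ of locally free sheaves with stable restrictions is open and meets $Z$ by the genericity above, hence meets $W$ in a dense open subset on which the restriction degrees take the locally constant value $d_i$. Thus a general $[E']\in W$ is locally free with stable restrictions $E'_i$ of degree $d_i$, and since $0\le d_i\le r$ we are in case (b) of Theorem \ref{THM:genTHMb}: $E'$ is a BGN extension $0\to V'\otimes\cO_C\to E'\to F'\to 0$ of a locally free sheaf $F'$ of rank $s=r-k$, with $\deg F'_i=d_i$. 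Once one checks that the associated coherent system is a point of the terminal space $\GwL(r\cdot\underline 1,d,k)$ and that $F'$ is $\w$-(semi)stable, Lemma \ref{LEM:STARCONDITIONS} places $F'$ in $X_{d_1,\dots,d_\gamma}$, so $(E',V')\in Y_{d_1,\dots,d_\gamma}$ and $[E']=\phi(E',V')\in Z$. Therefore $W=Z$, and combining Proposition \ref{PROP:Petri}(a) (every component has dimension at least $\beta_C(r,d,k)$) with $\dim Z\le\beta_C(r,d,k)$ gives $\dim Z=\beta_C(r,d,k)$. Finally $\phi$ is then generically finite, and the general fibre $\{V\in\Gr(k,H^0(E)):(E,V)\in Y_{d_1,\dots,d_\gamma}\}$ being finite forces $h^0(E)=k$, hence $V=H^0(E)$; so $\phi$ is birational and $Z$ is birational to the Grassmannian fibration $Y_{d_1,\dots,d_\gamma}\to X_{d_1,\dots,d_\gamma}$.

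The main obstacle is the identity $W=Z$, and more precisely the claim that the generic point $[E']$ of the ambient component $W$ is a BGN extension whose cokernel $F'$ lands in the \emph{same} component $X_{d_1,\dots,d_\gamma}$. Three points must be checked: the openness of the locally-free-with-stable-restrictions locus and the local constancy of the restriction degrees (so that $[E']$ falls under hypothesis (b) of Theorem \ref{THM:genTHMb} with the correct $d_i$); the fact that a BGN extension produced by Theorem \ref{THM:genTHMb} represents a point of the terminal space $\GwL(r\cdot\underline 1,d,k)$; and the $\w$-(semi)stability of $F'$, needed to place it in $X_{d_1,\dots,d_\gamma}$ through the correspondence between components of $\Uw(s\cdot\underline 1,d)$ and multidegrees satisfying $(\star)_j$ in Lemma \ref{LEM:STARCONDITIONS}. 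The first is routine; the latter two, which pass through the restriction to the smooth components $C_i$ where the classical theory of \cite{BGN97} applies, are where the real work lies.
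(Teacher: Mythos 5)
Your first half reproduces the paper's strategy (construct $Y_{d_1,\dots,d_\gamma}$, apply Proposition \ref{PROP:Estab}, take $Z$ to be the closure of the image of the forgetful map, deduce non-emptiness and $\dim Z\le \beta_C(r,d,k)$), but the second half — the claim $W=Z$ — is where your proof has a genuine gap, and it is not a routine one. You reduce $W=Z$ to two statements that you explicitly leave unchecked: that the coherent system $(E',V')$ attached to a general $[E']\in W$ via Theorem \ref{THM:genTHMb} is a point of the terminal space $\GwL(r\cdot\underline{1},d,k)$, and that the quotient $F'=\coker(\ev_{V'})$ is $\w$-semistable. Neither follows from the results you quote: Theorem \ref{THM:genTHMb} produces a BGN extension but says nothing about semistability of the quotient (this is not part of the definition of BGN extension), nor about $\wa$-stability of $(E',V')$ in the terminal range of $\alpha$ (Proposition \ref{PROP:COH}(b) gives $\wa$-stability only for $\alpha$ \emph{below} the first critical value). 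Even granting both, you would still need $(E',V')$ to lie in the specific component $Y_{d_1,\dots,d_\gamma}$, whereas $\psi$ is merely dominant and its Grassmannian-fibre description holds over a \emph{general} $\w$-stable $F$, not over an arbitrary $F'$. The same issue undermines your birationality step: ``a finite fibre forces $h^0(E)=k$'' presupposes that every $k$-dimensional $V\subseteq H^0(E)$ yields a point of $Y_{d_1,\dots,d_\gamma}$, which again would require terminal $\wa$-stability of all such pairs.

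The paper avoids inspecting the ambient component $W$ altogether, and the two ingredients it uses instead are exactly what is missing from your write-up. First, it proves $h^0(F)=0$ for general $F\in X_{d_1,\dots,d_\gamma}$: such an $F$ is glued from general stable bundles $F_i$ on the $C_i$ with $0<d_i\le s$, so $h^0(F_i)=0$ by Theorem A of \cite{BGN97}, and the sequence $0\to F\to\bigoplus_i F_i\to T\to 0$ kills $h^0(F)$; note this is the only place the hypothesis $d_i\le s$ enters, and it never appears in your argument. From $h^0(F)=0$ and the BGN sequence one gets $h^0(E)=k$ for general $(E,V)\in Y_{d_1,\dots,d_\gamma}$, so the forgetful map is generically injective and $\dim Z=\beta_C(r,d,k)$ directly — no appeal to Proposition \ref{PROP:Petri}(a) is needed. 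Second, and decisively, the paper invokes \cite[Proposition 2.13]{BFBGN}: the Petri map of a general such $E$ is injective, so by Proposition \ref{PROP:Petri}(b) the locus $\Bw(r\cdot\underline{1},d,k)$ is \emph{smooth} of dimension $\beta_C(r,d,k)$ at $E$; smoothness at a point of $Z$ with local dimension equal to $\dim Z$ forces $Z$ to be the unique irreducible component through that point. This local smoothness argument replaces your entire $W=Z$ analysis. To repair your proof you would either have to carry out the deferred stability verifications (which, as explained, do not follow from the cited results and are essentially as hard as the theorem itself) or switch to the Petri-map route.
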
 

\begin{proof}
First of all, we will prove that the general $F\in X_{d_1,\dots,d_{\gamma}}$ is a $\w$-stable locally free sheaf with $h^0(F) = 0$. Indeed, by Lemma  \ref{LEM:STARCONDITIONS} (a), $F$ is obtained by gluing general stable locally free sheaves $F_i$ of rank $s$ with $\deg(F_i)=d_i$. The relation between $F$ and its restrictions is given by the exact sequence (see \cite{Ses})
$$0\to F\to \bigoplus_i F_i\to T \to 0$$
where $T$ has support on the nodes of $C$ and the rank of $T$ at each node is exactly $s$.
By the results about Brill-Noether theory of higher-rank on smooth curves (see \cite[Theorem A]{BGN97}) a general $F_i \in{\cU}_{C_i}(s,d_i)$ has $h^0(F_i)=0$ since $d_i \leq  s$. Then, the above sequence  implies $h^0(F)=0$.
\vspace{2mm}

By the assumptions on $k$, we have
$k \leq 1 + s(g_i-1) \leq d_i + (r-k)(g_i-1), $ which implies 
$kg_i \leq d_i + r(g_i-1)$. Since $C$ is of compact type, this implies $kp_a(C)\leq d+r(p_a(C)-1)$. Then, by \cite[Theorem 5.1(b)]{BFBGN}, there exists an irreducible component $Y_{d_1,\dots,d_\gamma}$ of the moduli space  $\GwL(r\cdot\underline{1},d,k)$  and a dominant morphism $\psi \colon Y_{d_1,\dots,d_\gamma} \to X_{d_1,\dots,d_\gamma}$  whose fiber over $F$ is isomorphic to  $\Gr(k,H^1(F^*))$.
By Proposition \ref{PROP:Estab}, for a general coherent system $(E,V)\in Y_{d_1,\dots,d_\gamma}$ we have that $E$ is $\w$-stable.
Moreover, for a  general coherent system $(E,V)\in Y_{d_1,\dots,d_\gamma}$ we also have that
$h^0(E) = k$: this follows from the cohomological exact sequence induced by exact sequence \eqref{SES:BGN}
since $h^0(F)=0$ for $F$ general in $X_{d_1,\dots,d_\gamma}$. Then it is well defined, as rational map, the forgetfull map 
$$\xymatrix{ 
Y_{d_1,\dots,d_\gamma}\ar@{-->}[r]^-{f} & \Bw(r\cdot \underline{1},d,k) & (E,V)\ar@{|->}[r] & E
}$$
This proves that $\Bw(r\cdot \underline{1},d,k)$ is non-empty. 
\medskip

Let $Z$ be the image of $f$ (more precisely, the closure of the image of the domain of $f$). Consider $(E,V) \in Y_{d_1,\dots,d_\gamma}$ general. Since $h^0(E)=k$ , we have that $f^{-1}(E)  = \{(E,V) \}$. This implies that 
$$\dim Z = \dim Y_{d_1,\dots,d_\gamma}= \beta_C(r,d,k)$$ 
by \cite[Theorem 5.1(c)]{BFBGN}.
Finally,  a general element $E$ of $Z$ is a locally free sheaf, $\w$-stable, with $h^0(E) = k$ and the Petri map $\mu_E $ is injective,  see \cite[Proposition 2.13]{BFBGN}.
By Proposition \ref{PROP:Petri}, $\Bw(r\cdot \underline{1},d,k)$ is smooth at $E$ and it has dimension $\beta_C(r,d,k)$.
This implies that $Z$ is an irreducible component of $\Bw(r\cdot \underline{1},d,k)$.
\end{proof}

\begin{remark}
Let $Z$ be the irreducible component of $\Bw(r\cdot \underline{1},d,k)$ defined by $X_{d_1,\dots,d_{\gamma}}$ in Theorem \ref{THM:Main}. As consequence of the proof we have that if $E\in Z$ is locally free and $h^0(E)=k$ (i.e. $E\not \in \Bw(r\cdot \underline{1},d,k+1)$ ), then $Z$ is smooth at $E$.
\end{remark}

In light of Theorem \ref{THM:Main}, in order to obtain irreducible components of $\Bw(r\cdot \underline{1},d,k)$, it is worth to search for non-empty irreducible components $X_{d_1,\dots,d_{\gamma}} \subset \Uw((r-k)\cdot \underline{1},d)$ such that $0<d_i \leq r-k$ for all $i=1,\dots,\gamma$. This is equivalent to ask that all the restrictions $F_i$ of a locally free sheaf $F\in X_{d_1,\dots,d_{\gamma}}$ have slope $\mu_i$ in $(0,1]$. For brevity, in this case, we will say that $X_{d_1,\dots,d_{\gamma}}$ is a component with {\it small slopes}.


\section{Components with small slopes}
\label{SEC:SMALLSLOPE}

Let $C$ be nodal curve of compact type with $\gamma$ smooth irreducible components with genus $g_i\geq 2$ for all $i=1,\dots,\gamma$. Let $s,d\in \bN_{+}$. In this section we are looking for sufficient conditions for the existence of components with small slopes of $\Uw(s\cdot \underline{1},d)$ for a suitable good polarization $\w$. We recall that  $\underline{\eta}$  denotes the canonical polarization on $C$ (see Remark \ref{REM:canonicalpol}).
 
\begin{proposition}
\label{PROP:TEC}
Let $C$ be a nodal curve as above. Assume that one of the following conditions hold:
\begin{enumerate}[(a)]
    \item $\gamma\leq d\leq \frac{s}{2}+1$;
    \item $s/2+1< d\leq s, \, s/2\geq (\gamma-1)$ and there exists $i\in \{1,\dots, \gamma\}$ such that $\eta_id\geq s/2$;
    \item $s/2+1< d\leq s\gamma $ and $s/2\geq (\gamma-1)$, 
    let $n,m\in \bN$ be such that $d=n\gamma+m$ with $0\leq m < \gamma$
    and assume 
    \begin{equation}
    \label{EQ:EtaiD}
    n+1-\frac{s}{2(\gamma-1)}<\eta_i d< n+\frac{s}{2(\gamma-1)}
    \end{equation} for all but at most one index $i\in \{1,\dots, \gamma\}$.
\end{enumerate}
Then, there exists an open neighborhood $U$ of the canonical polarization such that for any $\w\in U$, $\Uw(s\cdot \underline{1},d)$ has a non-empty component $X_{d_1,\dots,d_{\gamma}}$ with small slopes.
\end{proposition}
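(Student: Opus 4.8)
The plan is to reduce the statement to a purely combinatorial problem about the integers $(d_1,\dots,d_\gamma)$ and to solve it by an explicit choice in each of the three regimes. By Lemma~\ref{LEM:STARCONDITIONS}(b) an irreducible component of $\Uw(s\cdot\underline1,d)$ is nothing but a $\gamma$-uple $(d_1,\dots,d_\gamma)\in\bZ^\gamma$ with $\sum_i d_i=d$ satisfying the conditions $(\star)_j$, and asking that it has small slopes means exactly $1\le d_i\le s$ for all $i$. Since $\ueta$ is good and, by Lemma~\ref{LEM:STARCONDITIONS}(c), the simultaneous validity of the $(\star)_j$ and the goodness of the polarization are open conditions, it suffices to exhibit integers $d_1,\dots,d_\gamma$ with $\sum_i d_i=d$ and $1\le d_i\le s$ that satisfy $(\star)_j$ \emph{strictly} for the canonical polarization; the neighborhood $U$ is then the one produced by Lemma~\ref{LEM:STARCONDITIONS}(c). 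By Remark~\ref{REM:canonicalpol}, writing $\rho_j:=\mathrm{rk}_{\ueta}(\cO_{A_j})$, condition $(\star)_j$ for $\ueta$ reads
$$\Bigl|\ \sum_{C_i\subseteq A_j}d_i-\rho_j d\ \Bigr|<\frac{s}{2},\qquad j=1,\dots,\gamma-1.$$

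I would then fix the combinatorics using Lemma~\ref{LEM:ORDER}: order the components and choose $C_\gamma$ so that $C_l\subseteq A_j$ forces $l\le j$. In particular $A_j\subseteq C_1\cup\dots\cup C_{\gamma-1}$ and $C_\gamma\subseteq A_j^c$ for every $j$, and (as $C$ is of compact type) each $A_j$ is the subcurve cut off by the single node $p_j$ on the side away from $C_\gamma$. Setting $\epsilon_i:=d_i-\eta_i d$, so that $\sum_i\epsilon_i=0$ and $\sum_{C_i\subseteq A_j}\epsilon_i=\sum_{C_i\subseteq A_j}d_i-\rho_j d$, the inequalities above become the requirement that every ``cut-sum'' $\sum_{C_i\subseteq A_j}\epsilon_i$ of the errors over a subcurve $A_j$ be smaller than $s/2$ in absolute value, while keeping $1\le d_i\le s$.

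The three cases correspond to three explicit choices. In cases (a) and (b) I take $d_i=1$ for $i<\gamma$ and $d_\gamma=d-(\gamma-1)$, choosing in case (b) the root $C_\gamma$ to be a component with $\eta_\gamma d\ge s/2$. Here each $A_j$ avoids $C_\gamma$, so $\sum_{C_i\subseteq A_j}d_i=a_j\le\gamma-1\le s/2$ (the last inequality being forced by $\gamma\le d\le s/2+1$ in (a) and assumed in (b)), while $0<\rho_j d<d$; the bounds $\gamma\le d\le s/2+1$ in (a), respectively $\eta_\gamma d\ge s/2$ together with $d\le s$ in (b) (which gives $\rho_j d\le\sum_{i<\gamma}\eta_i d=d-\eta_\gamma d\le s/2$), together with the strict inequalities $\eta_i<1$ and $\rho_j<1$, yield $|a_j-\rho_j d|<s/2$ and $1\le d_\gamma\le s$. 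In case (c), set $n=\lfloor d/\gamma\rfloor$, $\theta=s/\bigl(2(\gamma-1)\bigr)$ and let $C_\gamma$ be the (at most one) exceptional index; put $d_\gamma=n$ and assign $d_i\in\{n,n+1\}$ for $i<\gamma$ with exactly $m$ of them equal to $n+1$. The hypothesis \eqref{EQ:EtaiD}, namely $n+1-\theta<\eta_i d<n+\theta$, together with $\theta>\tfrac12$ (which follows from $s\ge 2(\gamma-1)$), makes \emph{both} values $n$ and $n+1$ admissible with $|\epsilon_i|<\theta$, so the assignment is free; since each $A_j$ contains at most $\gamma-1$ of the non-exceptional indices and none contains $C_\gamma$, one gets $|\sum_{C_i\subseteq A_j}\epsilon_i|<(\gamma-1)\theta=s/2$. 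The bounds $s/2+1<d\le s\gamma$ and $s/2\ge\gamma-1$ guarantee $1\le n\le s$, and hence $1\le d_i\le s$ (the value $n+1$ occurring only when $n\le s-1$), i.e. small slopes.

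The main obstacle is controlling all $\gamma-1$ cut-sums simultaneously: because the errors $\epsilon_i$ accumulate along paths of the dual tree, a blind rounding of $\eta_i d$ to the nearest integer does not suffice. The three numerical regimes are tailored precisely to defeat this accumulation — in (a) the total degree is too small to overflow a cut; in (b) a single component absorbs the bulk of the degree, forcing all remaining errors to sum to at most $s/2$; in (c) each individual error is pushed below $\theta=s/(2(\gamma-1))$, so that the sum over a path of length at most $\gamma-1$ stays below $s/2$. A secondary but essential point is the \emph{strictness} of the inequalities for $\ueta$, needed so that $(\star)_j$ persists on an open neighborhood $U$; this is exactly what the strict inequalities $\eta_i<1$ and $\rho_j<1$, coming from $C_\gamma\subseteq A_j^c$, provide.
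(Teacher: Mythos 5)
Your proposal is correct and takes essentially the same route as the paper's proof: the same reduction to the combinatorial conditions $(\star)_j$ for the canonical polarization via Lemma \ref{LEM:STARCONDITIONS}(b)--(c) and Remark \ref{REM:canonicalpol}, the same choice $(d_1,\dots,d_\gamma)=(1,\dots,1,d-\gamma+1)$ in cases (a)--(b) with the distinguished component placed at $C_\gamma$ by Lemma \ref{LEM:ORDER}, and the same distribution of values in $\{n,n+1\}$ in case (c). The only cosmetic difference is your reformulation via the errors $\epsilon_i=d_i-\eta_i d$ and cut-sums, which repackages (equivalently) the paper's direct estimates on $\sum_{C_i\subseteq A_j}d_i$.
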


\begin{proof}
First of all, we chose an ordering of the components of $C$ according to Lemma \ref{LEM:ORDER}. Hence, in case $(b)$ we can assume that $\eta_{\gamma}d\geq s/2$ whereas in case $(c)$ we can require that
condition \eqref{EQ:EtaiD} holds for all $i=1,\dots, \gamma-1$.
\medskip

Assume now that we are either in case $(a)$ or $(b)$.
We will show that (up to the above reordering) $X_{1,1,\dots,1,d-\gamma+1}$ is not empty for the canonical polarization $\underline{\eta}$, which is good as we have seen in Remark \ref{REM:canonicalpol}. By Lemma \ref{LEM:STARCONDITIONS}$(b)$, it will be enough to show that $d_1=\dots=d_{\gamma-1}=1$ satisfy conditions $(\star)_j$ for $j=1,\dots, \gamma-1$ and $0<d-\gamma+1\leq s$. Lemma \ref{LEM:STARCONDITIONS}$(c)$ will imply the result for a suitable neighborhood of $\underline{\eta}$.
\vspace{2mm}

We have that $d_1=\dots=d_{\gamma-1}=1$ satisfy condition $(\star)_j$ for $\underline{\eta}$ (i.e.  conditions  \eqref{EQ:STARJcanonical}) for any $j=1, \dots \gamma -1$ if and only if
\begin{equation}
\label{EQ:INEQfinal}
\rk_{\underline{\eta}}(\cO_{A_j})d-s/2<a_j<\rk_{\underline{\eta}}(\cO_{A_j})d+s/2
\end{equation}
where $a_j \geq 1$ is the number of components of $A_j$.
\vspace{2mm}

We prove now the inequality on the left of  \eqref{EQ:INEQfinal}. Under our assumption, we are able to prove the stronger inequality 
\begin{equation}
\label{EQ:INEQ1}
\rk_{\underline{\eta}}(\cO_{A_j})d-s/2<1
\end{equation}
for any $j=1,\dots \gamma-1$. 
\medskip

Indeed, in case $(a)$, Inequality \eqref{EQ:INEQ1} follows immediately from the assumption $d\leq s/2+1$. If we are in case $(b)$, assume, by contradiction, that there exists a curve $A_j$ such that $\rk_{\underline{\eta}}(\cO_{A_j})d\geq s/2+1$.
Then, since $C_\gamma$ is not a component of $A_j$ for all $j=1,\dots, \gamma-1$ by construction, we have
$$d =d\sum_{i=1}^{\gamma}\eta_i\geq \rk_{\underline{\eta}}(\cO_{A_j})d + \eta_{\gamma} d \geq s/2+1+ s/2\geq s+1$$
which is impossible since $d\leq s$. Finally, we have (in both cases)
\begin{equation}
1\leq a_j\leq \gamma-1\leq s/2<\rk_{\underline{\eta}}(\cO_{A_j})d+s/2.
\end{equation}
which implies the inequality on the right of \eqref{EQ:INEQfinal}.
\vspace{2mm}

One then concludes by observing that $d_{\gamma}=d-(\gamma-1)$ is such that $1\leq d_{\gamma}\leq s$ by assumption.
\medskip

Assume now that we are in case $(c)$. By the hypothesis on $d$, we have $n\leq s$ and $n\leq s-1$ if $m\neq 0$. 
Consider any component $X_{d_1,\dots,d_\gamma}$ where, for any $i=1,\dots \gamma$, $d_i$ is either equal to $n$ or to $n+1$. In particular we have $d_i=n+1$ for exactly $m$ values of $i$. We will prove $X_{d_1\dots,d_\gamma}$ is a non-empty component with small slopes for the canonical polarization. 
The conditions $(\star)_j$ for $\underline{\eta}$ can be written as follows:
\begin{equation}
\label{EQ:INEQfinal2}
\rk_{\underline{\eta}}(\cO_{A_j})d-s/2<\sum_{C_i\subseteq A_j}d_i<\rk_{\underline{\eta}}(\cO_{A_j})d+s/2.
\end{equation}
Notice that we have 
\begin{equation}
\label{EQ:Central}
na_j\leq \sum_{C_i\subseteq A_j}d_i \leq  (n+1)a_j
\end{equation}
where, as before, $a_j$ is the number of components of $A_j$.
Assumption \eqref{EQ:EtaiD} implies
$$\rk_{\underline{\eta}}(\cO_{A_j})d-s/2<na_j+\frac{s}{2(\gamma-1)}a_j-\frac{s}{2}=n a_j+\frac{s}{2}\left(\frac{a_j-(\gamma-1)}{\gamma-1}\right)\leq n a_j$$
and
$$\rk_{\underline{\eta}}(\cO_{A_j})d+s/2>(n+1)a_j-\frac{s}{2(\gamma-1)}a_j+\frac{s}{2}=(n+1) a_j+\frac{s}{2}\left(\frac{(\gamma-1)-a_j}{\gamma-1}\right)\geq (n+1) a_j$$
which implies  the desired conditions using  \eqref{EQ:Central}.
\end{proof}

\begin{remark}
The conditions $(b)$ $(c)$ gives constraints to the geometric configuration of the curve $C$. For example, in case $(b)$, roughly, there is a component with  very high genus or with a lot of nodes on it. More precisely, one has that there exists a unique $j$ such that $\eta_j\geq 1/2$. This is equivalent to say that 
$$\delta_j\geq p_a(C)-2g_j+1$$
where $\delta_j$ is the number of nodes on $C_j$. 
\end{remark}

To conclude this section  we will focus on two  classes of curves of compact type: chain-like and comb-like 
curves. In these cases we prove the existence of components with small slopes for $0< d \leq s$. 

\begin{proposition}
\label{PROP:Chain}
Let $C$ be a chain-like curve with $\gamma \geq 2$ smooth irreducible components.
Assume that 
$$ s \geq 2(\gamma-1) \qquad \mbox{ and }\qquad \gamma\leq d\leq s.$$
Then, there exists a neighborhood $U$ of the canonical polarization $\un{\eta}$ such that 
for any $\w\in U$, $\Uw(s\cdot \underline{1},d)$ has a non-empty component $X_{d_1,\dots,d_{\gamma}}$ with small slopes.
\end{proposition}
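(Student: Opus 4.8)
The plan is to reduce the statement, via Lemma \ref{LEM:STARCONDITIONS} and Remark \ref{REM:canonicalpol}, to a purely combinatorial existence problem for the canonical polarization $\underline{\eta}$, and then to spread the solution to a neighbourhood. By Lemma \ref{LEM:STARCONDITIONS}(b), a component $X_{d_1,\dots,d_\gamma}$ with small slopes for $\underline{\eta}$ exists as soon as we produce integers $d_1,\dots,d_\gamma$ with $\sum_{i=1}^{\gamma}d_i=d$, with $0<d_i\le s$ for every $i$, and satisfying condition $(\star)_j$ for $\underline{\eta}$ (that is, \eqref{EQ:STARJcanonical}) for $j=1,\dots,\gamma-1$; Lemma \ref{LEM:STARCONDITIONS}(c) then yields the same for every $\w$ in a suitable neighbourhood $U$ of $\underline{\eta}$, which can be taken inside the good locus. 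A first simplification is that the upper bound $d_i\le s$ comes for free: if the $d_i$ are positive integers with $\sum_i d_i=d\le s$, then $d_i=d-\sum_{k\ne i}d_k\le d-(\gamma-1)\le s$. Hence it remains only to realise the lower bound $d_i\ge 1$ together with the window conditions $(\star)_j$.

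For a chain-like curve the subcurves of Lemma \ref{LEM:ORDER} are $A_j=\bigcup_{i=1}^{j}C_i$ (Example \ref{EX:chain}), so, writing $S_j=\sum_{i=1}^{j}d_i$ for the partial sums and $t_j=\rk_{\underline{\eta}}(\cO_{A_j})\,d=\big(\sum_{i=1}^{j}\eta_i\big)d$ for the corresponding ``targets'', condition \eqref{EQ:STARJcanonical} reads simply
$$t_j-\tfrac{s}{2}<S_j<t_j+\tfrac{s}{2}\qquad(j=1,\dots,\gamma-1).$$
Since $S_0=0=t_0$ and $S_\gamma=d=t_\gamma$ are forced, the task is to interpolate by a strictly increasing sequence of integers $0=S_0<S_1<\dots<S_\gamma=d$ (the strict monotonicity being exactly $d_i\ge 1$) lying within the windows of radius $s/2$ about the increasing reals $t_j$. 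This interpolation has a chance of succeeding because $\gamma\le d$ leaves room for $\gamma$ distinct integer values between $0$ and $d$.

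To build the $S_j$ I would start from the rounded targets $\lfloor t_j\rfloor$, which already satisfy $|\lfloor t_j\rfloor-t_j|<1\le s/2$, and then repair monotonicity by the greedy rule $S_j=\max(S_{j-1}+1,\lfloor t_j\rfloor)$. A short induction shows that the accumulated upward correction $e_j:=S_j-\lfloor t_j\rfloor$ increases by at most one at each step, so $e_j\le j\le\gamma-1$; combined with $\lfloor t_j\rfloor\le t_j$ this keeps $S_j$ inside the right-hand window because $s/2\ge\gamma-1$, while $S_j\ge\lfloor t_j\rfloor>t_j-1\ge t_j-s/2$ takes care of the left-hand window.

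The hard part is the borderline behaviour when $s=2(\gamma-1)$ and the control of the \emph{last} increment $d_\gamma=d-S_{\gamma-1}$. Here the crude estimate $e_j\le\gamma-1$ must be sharpened by the observation that whenever $e_j$ is large the floors $\lfloor t_i\rfloor$ were repeatedly ``stuck'', which forces $S_i=i$ and hence $t_i$ correspondingly small, so that $S_j-t_j<s/2$ stays \emph{strict} even at the extreme end of the admissible range; the same bookkeeping, run from the top, guarantees $S_{\gamma-1}\le d-1$ and thus $d_\gamma\ge 1$. Throughout, the hypothesis $s\ge 2(\gamma-1)$ is exactly what makes the windows wide enough to absorb the monotonicity corrections, while $\gamma\le d\le s$ is what renders the endpoint constraints $S_0=0$, $S_\gamma=d$ compatible with strict monotonicity and with the automatic small-slope bound.
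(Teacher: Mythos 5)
Your reduction (via Lemma \ref{LEM:STARCONDITIONS}, Remark \ref{REM:canonicalpol} and Example \ref{EX:chain}) to finding integers $0=S_0<S_1<\dots<S_\gamma=d$ with $t_j-s/2<S_j<t_j+s/2$ is exactly the paper's, and your greedy rule $S_j=\max(S_{j-1}+1,\lfloor t_j\rfloor)$ does keep all the windows strictly satisfied: tracing back to the last index $i_0$ at which the floor was attained gives $S_j\le t_{i_0}+(j-i_0)\le t_j+(\gamma-2)<t_j+s/2$, and $S_j=j\le\gamma-1<t_j+s/2$ when no such index exists. The genuine gap is the final increment: your claim that ``the same bookkeeping, run from the top, guarantees $S_{\gamma-1}\le d-1$'' is false for this construction. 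The failure mode is not a large accumulated correction $e_j$, but $t_{\gamma-1}$ close to $d$ combined with a \emph{single} monotonicity correction. Concretely, take a chain with $\gamma=3$, $g_1=15$, $g_2=g_3=2$, ordered from the high-genus end, and $s=4$, $d=3$ (all hypotheses hold, $s=2(\gamma-1)$). Then $\un{\eta}=(29/36,\,1/9,\,1/12)$, so $t_1=29/12\approx 2.42$ and $t_2=11/4=2.75$; your rule gives $S_1=\lfloor t_1\rfloor=2$, then $S_2=\max(S_1+1,\lfloor t_2\rfloor)=3$, hence $d_3=d-S_2=0$, which is not allowed (a component with small slopes needs $0<d_i\le s$). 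The statement itself is not in danger here: $(d_1,d_2,d_3)=(1,1,1)$ satisfies both windows; it is your construction that misses it.

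The repair requires building the endpoint constraint into the recursion rather than checking it afterwards: one must force $S_j\le d-(\gamma-j)$ at every step (so that each remaining component can still receive at least $1$) and then verify that this cap is compatible with the windows; the left-hand compatibility $d-(\gamma-j)>t_j-s/2$ follows from $t_j<d$ and $s/2\ge\gamma-1$, but guaranteeing that an \emph{integer} exists in the truncated interval at every later step needs more than the crude cap. This is precisely what the paper does: it replaces $(\star)_j$ by the stronger system $(\diamondsuit)_j$, which contains both $S_{j-1}+1\le S_j\le d-(\gamma-j)$ and a right window tightened by the margin $\gamma-j-1$, and proves by induction (its Steps (A) and (B)) that a solution of $(\diamondsuit)_j$ can always be extended to a solution of $(\diamondsuit)_{j+1}$; the hypotheses $s\ge 2(\gamma-1)$ and $\gamma\le d$ are consumed exactly in checking that these truncated intervals contain integers. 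Your proposal is the same strategy in spirit, but as written it omits the upper endpoint cap, and that omission is fatal.
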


\begin{proof}
We assume that the components of $C$ are ordered "in a natural way" (see Example \ref{EX:chain}), so that $A_j=\bigcup_{i=1}^{j}C_i$ for $j=1,\dots,\gamma-1$. 
We will prove that there exists a non-empty component 
$X_{d_1,\dots, d_{\gamma}} \subset 
\U_{(C,\un{\eta})}(s \cdot {\underline 1},d)$   satisfying the  requests of the Theorem. Then, using Lemma \ref{LEM:STARCONDITIONS}(c) we will obtain the result for a suitable neighborhood of $\underline{\eta}$.
\vspace{2mm}

By Lemma \ref{LEM:STARCONDITIONS}(b), the component $X_{d_1,\dots, d_{\gamma}}$ corresponds to a $\gamma$-uple $(d_1,\dots, d_{\gamma}) \in \mathbb{Z}^{\gamma}$ with $\sum{d_i}=d$ and which satisfies Condition $\eqref{EQ:STARJcanonical}$ for all $j= 1, \dots \gamma -1$.
\vspace{2mm}

For $j=1,\dots, \gamma-1$, consider the system
$$ (\diamondsuit)_j:\qquad \begin{cases}
(\sum_{i=1}^{j-1}d_i)+1 \leq \sum_{i=1}^{j}d_i \leq d - (\gamma -j) \\
d\sum_{i=1}^{j}\eta_i-s/2 < \sum_{i=1}^j d_i < d\sum_{i=1}^{j}\eta_i+s/2-(\gamma-j-1).
\end{cases}
$$
Note that if $(d_1,\dots, d_{j})$ satisfies $(\diamondsuit)_{j}$, then it satisfies  \eqref{EQ:STARJcanonical} too (for the same index $j$). 
\medskip

{\bf Claim}: For all $j=1,\dots, \gamma-1$, there exists $(d_1,\dots,d_j)\in \bN^{j}_{>0}$ which satisfies Condition $(\diamondsuit)_j$. We will prove the claim by recurrence.
\vspace{2mm}

{\bf Step (A)} We prove that there exists an integer $d_1$ satisfying the system 
$$(\diamondsuit)_1:\quad  \begin{cases} 
1 \leq d_1 \leq d-(\gamma -1) \\
d\eta_1-s/2 < d_1 < d\eta_1+s/2-(\gamma-2)
\end{cases}$$
\vspace{2mm}

Note that the system admits real solutions if and only if 
$$(A_1): \quad d\eta_1-s/2 < d-\gamma+1\qquad \mbox{ and }\qquad (A_2):\quad d\eta_1+s/2-\gamma+2> 1.$$
In this case, it is easy to see that we also have integer solutions.
The above conditions follows easily from the assumption $ s \geq 2(\gamma-1)$. Indeed
$$d\eta_1-s/2\leq d\eta_1-(\gamma-1)<d-\gamma+1$$
$$d\eta_1+s/2-\gamma+2\geq d\eta_1+(\gamma-1)-\gamma+2\geq d\eta_1+1>1$$
so this proves Step (A).
\vspace{2mm}

If $\gamma=2$, then we are done. Indeed, since $1\leq d_1\leq d-1$ and $d_1$ satisfies $(\star)_1$: a component satisfying our request is $X_{d_1,d-d_1}$. Hence, from now on, we can assume $\gamma\geq 3$.
\vspace{2mm}

{\bf Step (B)} Assume now that $1\leq j\leq \gamma-2$. We will prove that if $(d_1,\dots,d_i) \in \bN_{>0}^i$ satisfies $(\diamondsuit)_i$ for all $i=1,\dots,j$, then there exists $d_{j+1} \in \bN_{>0}$ such that $(d_1,\dots,d_j,d_{j+1})$ satisfies $(\diamondsuit)_{j+1}$.
\vspace{2mm}

We consider the system  
$$\qquad \begin{cases}
(\sum_{i=1}^{j}d_i)+1 \leq x \leq d - (\gamma -j-1) \\
d\sum_{i=1}^{j+1}\eta_i-s/2 < x < d\sum_{i=1}^{j+1}\eta_i+s/2 - (\gamma-j-2).
\end{cases}
$$
It  admits integer solutions if and only if
$$(B_1): \quad d\sum_{i=1}^{j+1}\eta_i+s/2 -(\gamma-j-2) > 1+\sum_{i=1}^{j}d_i\qquad\mbox{ and } \qquad (B_2):\quad d\sum_{i=1}^{j+1}\eta_i-s/2 < d-(\gamma-j-1).$$

Equation $(B_1)$ follows from $\diamondsuit_{j}$ and the assumption $ s \geq 2(\gamma-1)$:
$$d\sum_{i=1}^{j+1}\eta_i+s/2-(\gamma-j-2)=d\sum_{i=1}^{j}\eta_i+s/2+(\gamma-j-1)+(d\eta_{j+1}+1)>\sum_{i=1}^jd_i+(d\eta_j+1)>\sum_{i=1}^jd_i+1.$$
For $(B_2)$ it is enough to use $ s \geq 2(\gamma-1)$:
$$d\sum_{i=1}^{j+1}\eta_i-s/2<d\sum_{i=1}^{j+1}\eta_i-(\gamma-1)\leq d-(\gamma-1)+j=d-(\gamma-j-1).$$

Let $x$ be an integer solution, 
we set $d_{j+1}= x - \sum_{i=1}^j d_i$.  It follows that 
$1 \leq d_{j+1} \leq d-1 < s$ and that $(d_1,\dots,d_{j+1})$
satisfies $(\star)_{j+1}$. 
\vspace{2mm}

{\bf Step (C)} By recurrence we produce a $\gamma$-uple $(d_1,\dots, d_{\gamma}) \in \bN^{\gamma}_{>0}$ with $\sum_{i=1}^{\gamma}d_i = d$, which satisfies $(\star)_j$ for any $j=1,\dots,\gamma-1$ and with $1 \leq d_i \leq s-1$. Then $X_{d_1,\dots,d_{\gamma}}$ is a non-empty irreducible component of the moduli space $\U_{(C,\eta)}^s(s\cdot \underline{1},d)$.
\vspace{2mm}
\end{proof}

\begin{proposition}
\label{PROP:Comb}
Let $C$ be a comb-like curve with $\gamma \geq 3$ smooth irreducible components. Assume that 
$$s\geq 2(\gamma-1) \qquad \mbox{ and }\qquad \gamma\leq d\leq s.$$
Then, there exists a neighborhood $U$ of the canonical polarization $\un{\eta}$ such that 
for any $\w\in U$, $\Uw(s\cdot \underline{1},d)$ has a non-empty component $X_{d_1,\dots,d_{\gamma}}$ with small slopes.
\end{proposition}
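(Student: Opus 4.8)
The plan is to mimic the chain-like argument of Proposition \ref{PROP:Chain}, but to exploit the fact that for a comb the subcurves of Lemma \ref{LEM:ORDER} are single teeth, which decouples the stability inequalities. First I would fix the ordering of Example \ref{EX:comb}, so that $A_j=C_j$ for $j=1,\dots,\gamma-1$ and $C_\gamma$ is the grip. By Lemma \ref{LEM:STARCONDITIONS}(b),(c) it suffices to produce, for the canonical polarization $\un{\eta}$, a $\gamma$-tuple $(d_1,\dots,d_\gamma)\in\bZ^\gamma$ with $\sum_i d_i=d$, satisfying $(\star)_j$ for $j=1,\dots,\gamma-1$ and with small slopes $1\le d_i\le s$ for all $i$; the neighborhood $U$ then comes for free from part (c). Because $\rk_{\un\eta}(\cO_{C_j})=\eta_j$, condition \eqref{EQ:STARJcanonical} reads simply $\eta_j d-s/2<d_j<\eta_j d+s/2$ for each tooth $j\le\gamma-1$, while the grip degree $d_\gamma$ is constrained only by $1\le d_\gamma\le s$ and the total.

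The crux is then an elementary budget observation. Since $\sum_{j=1}^{\gamma-1}\eta_j d\le d\le s$, two teeth with $\eta_j d>s/2$ would already sum to more than $s$, so at most one tooth can satisfy $\eta_j d>s/2$; equivalently, for every tooth with $\eta_j d<s/2+1$ the choice $d_j=1$ already lies strictly inside its window (the lower bound $\eta_j d-s/2<1$ holds by assumption and the upper bound $1<\eta_j d+s/2$ is automatic, since $\eta_j d+s/2>s/2\ge\gamma-1\ge 2$). This dictates two cases. If no tooth has $\eta_j d\ge s/2+1$, I would set $d_1=\dots=d_{\gamma-1}=1$ and $d_\gamma=d-\gamma+1$; here $\gamma\le d\le s$ gives $1\le d_\gamma\le s$, and there is no stability condition to check on the grip. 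If a (necessarily unique) tooth $j_0$ has $\eta_{j_0}d\ge s/2+1$, I would instead put the bulk on that tooth: $d_{j_0}=d-\gamma+1$, $d_\gamma=1$, and $d_j=1$ for the remaining teeth.

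It then remains to check that $d_{j_0}=d-\gamma+1$ lies in its window and is a legal degree. Small slopes hold because $1\le d-\gamma+1\le d\le s$. For $(\star)_{j_0}$, the hypothesis $s\ge 2(\gamma-1)$ gives $\eta_{j_0}d-s/2<d-s/2\le d-\gamma+1$, and $\eta_{j_0}d+s/2>s\ge d>d-\gamma+1$ because $\eta_{j_0}d>s/2$; hence $\eta_{j_0}d-s/2<d_{j_0}<\eta_{j_0}d+s/2$. In both cases $\sum_i d_i=d$, so by Lemma \ref{LEM:STARCONDITIONS}(b) the tuple defines a non-empty component $X_{d_1,\dots,d_\gamma}$ of $\U_{(C,\un\eta)}(s\cdot\underline{1},d)$ with small slopes, and Lemma \ref{LEM:STARCONDITIONS}(c) propagates this to all $\w$ in a suitable good neighborhood $U$ of $\un\eta$. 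I expect the only genuine obstacle to be the interaction between positivity $d_i\ge 1$ and the fixed total $\sum_i d_i=d$: a priori a tooth forced to a large value could overspend the budget and push $d_\gamma$ below $1$, but the observation that $\sum_j\eta_j d\le s$ forbids two simultaneously large teeth means that at most one coordinate ever departs from the value $1$, keeping the bookkeeping trivial.
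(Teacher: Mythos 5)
Your proof is correct, and in the generic case it coincides with the paper's: with the grip placed last, the conditions $(\star)_j$ for the canonical polarization only constrain the teeth, and when every tooth satisfies $\eta_j d < s/2+1$ you take $d_j=1$ on each tooth and put the remainder $d-\gamma+1$ on the grip, which is exactly the component $X_{1,\dots,1,d-\gamma+1}$ exhibited in the paper. The genuine difference is the exceptional case where some tooth $j_0$ has $\eta_{j_0}d \geq s/2+1$: the paper disposes of it by invoking Proposition \ref{PROP:TEC}(b), whose proof secretly reorders the components (via Lemma \ref{LEM:ORDER} and Example \ref{EX:comb}) so that the heavy tooth becomes the last component and then runs the same ``all ones plus remainder'' construction, whereas you stay in the fixed comb ordering and place $d-\gamma+1$ on the heavy tooth and $1$ everywhere else, verifying its window from $s \geq 2(\gamma-1)$ (lower bound) and $\eta_{j_0}d > s/2$ (upper bound). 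Your budget observation --- two teeth cannot both have $\eta_j d > s/2$, since $\sum_i \eta_i d = d \leq s$ --- is what makes this one-case-at-a-time bookkeeping sound, and it is the same observation that powers the contradiction argument inside the proof of Proposition \ref{PROP:TEC}(b). The net effect is that your proof is self-contained (it never appeals to Proposition \ref{PROP:TEC}) at the cost of redoing a short computation the paper delegates; both arguments are complete, and all your inequality checks (including $1 \leq d_i \leq s$ for small slopes, the sum condition, and the propagation to a neighborhood of $\un{\eta}$ via Lemma \ref{LEM:STARCONDITIONS}(c)) are valid.
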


\begin{proof}
We can assume that the components of $C$ are ordered so that $C_{\gamma}$ is the "grip" of $C$, i.e. $C_{\gamma}$ is the component with $\gamma-1$ nodes (see Example \ref{EX:comb}).  As in the previous case, we are looking for a $\gamma$-uple $(d_1,\dots, d_{\gamma}) \in \mathbb{N}_{>0}^{\gamma}$ with $\sum{d_i}=d$ and which satisfies the stability Condition $\eqref{EQ:STARJcanonical}$ for any $j=1,\dots \gamma-1$ for the canonical polarization. 
With the chosen ordering, we have $A_j=C_j$ for all $j=1,\dots,\gamma-1$ so that the above stability condition can be written as
\begin{equation}
\label{EQ:stabcomblike}
d\eta_j-s/2<d_j<d\eta_j+s/2.
\end{equation}
We can assume that $d\eta_j<s/2+1$ for all $j=1,\dots, \gamma$ since, otherwise, we can conclude using Proposition \ref{PROP:TEC}$(b)$. Then, it is easy to see that for all $j=1,\dots,\gamma-1$, $d_j=1$ satisfy the  Inequality \eqref{EQ:stabcomblike}. Since $\gamma-1<d$ by assumption we have that
$X_{1,\dots,1,d-(\gamma-1)}$ is a non-empty irreducible component of the moduli space $\U_{(C,\eta)}^s(s\cdot \underline{1},d)$.
\end{proof}

As a consequence of Propositions \ref{PROP:TEC}, \ref{PROP:Chain} and \ref{PROP:Comb} and Theorem \ref{THM:Main}, we obtain the following result:
\begin{theorem}
\label{THM:MAIN-B}
Let $C$ be a curve of compact type with $\gamma \geq 2$ smooth irreducible components  $C_i$  of genus $g_i \geq 2$.
Let  $d, s$ and $k$ be integers such that $$k \leq  1 + s(g_i -1) \mbox{ for all } i=1,\dots, \gamma.$$
Assume, furthermore, that one of the following conditions holds:
\begin{itemize}
    \item $d,s$ and $\gamma$ satisfy one of the three conditions of Proposition \ref{PROP:TEC}; 
    \item $C$ is either a chain-like or comb-like curve,
    $s\geq 2(\gamma-1)$ and $\gamma\leq d\leq s$.
\end{itemize} 
Then, setting $r=s+k$, the Brill-Noether locus $\Bw(r\cdot \underline{1},d,k)$ is non-empty whenever $\w$ lies in a suitable open neighborhood of the canonical polarization. Moreover, it has an irreducible component of dimension $\beta_C(r,d,k)$.
\end{theorem}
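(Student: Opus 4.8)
The plan is to deduce the statement by feeding the small-slope components produced in Propositions \ref{PROP:TEC}, \ref{PROP:Chain} and \ref{PROP:Comb} into the component-producing mechanism of Theorem \ref{THM:Main}; the argument is therefore a synthesis of earlier results rather than a fresh construction. First I would fix the numerical data $d$, $s$, $k$ and split according to the hypothesis. In the first case, where $d$, $s$ and $\gamma$ satisfy one of the three conditions of Proposition \ref{PROP:TEC}, that proposition directly supplies an open neighborhood $U$ of the canonical polarization $\underline{\eta}$ such that, for every $\w \in U$, the moduli space $\Uw(s\cdot\underline{1},d)$ contains a non-empty irreducible component $X_{d_1,\dots,d_\gamma}$ with small slopes, i.e. with $0 < d_i \leq s$ for all $i$. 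In the second case, where $C$ is chain-like or comb-like with $s \geq 2(\gamma-1)$ and $\gamma \leq d \leq s$, I would invoke Proposition \ref{PROP:Chain} (for chain-like $C$, and also for comb-like $C$ with $\gamma = 2$, where the two notions coincide) or Proposition \ref{PROP:Comb} (for comb-like $C$ with $\gamma \geq 3$) to obtain the same conclusion.

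Next I would arrange for the polarization to be good at the same time. Since $C$ is a stable curve of compact type with $p_a(C) \geq 2$, the canonical polarization $\underline{\eta}$ is good by \cite[2.8]{BFPol}, and goodness is an open condition; concretely, Lemma \ref{LEM:STARCONDITIONS}(c) shows that both the inequalities $(\star)_j$ defining $X_{d_1,\dots,d_\gamma}$ and the goodness of the polarization are preserved in a neighborhood of $\underline{\eta}$. Shrinking $U$ if necessary, I may therefore assume that every $\w \in U$ is good and still carries the small-slope component $X_{d_1,\dots,d_\gamma}$.

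Finally, fixing any good $\w \in U$, all hypotheses of Theorem \ref{THM:Main} are met: $(C,\w)$ is a polarized nodal curve of compact type with $\w$ good, the bound $k \leq 1 + s(g_i-1)$ holds for all $i$ by assumption, and $X_{d_1,\dots,d_\gamma}$ is a non-empty component of $\Uw(s\cdot\underline{1},d)$ with $0 < d_i \leq s$. Applying Theorem \ref{THM:Main} with $r = s+k$ then produces an irreducible component $Z$ of $\Bw(r\cdot\underline{1},d,k)$ of dimension $\beta_C(r,d,k)$, whence the Brill-Noether locus is non-empty, exactly as required. I expect the only real subtlety to be this bookkeeping of neighborhoods: one must check that a single $U$ can be chosen on which goodness and all the conditions $(\star)_j$ hold simultaneously. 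Since each is an open condition around $\underline{\eta}$, their common refinement is again an open neighborhood, so no genuine obstacle arises beyond matching the numerical hypotheses of the theorem to those required by each cited proposition.
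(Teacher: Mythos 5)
Your proposal is correct and follows exactly the paper's argument: the paper derives Theorem \ref{THM:MAIN-B} precisely by combining Propositions \ref{PROP:TEC}, \ref{PROP:Chain} and \ref{PROP:Comb} (which supply the small-slope component $X_{d_1,\dots,d_\gamma}$ for all $\w$ in a neighborhood of $\underline{\eta}$, with goodness preserved via Lemma \ref{LEM:STARCONDITIONS}(c)) with the component-producing mechanism of Theorem \ref{THM:Main}. Your handling of the neighborhood bookkeeping and the $\gamma=2$ comb-like case are the only details the paper leaves implicit, and you resolve them correctly.
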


We conclude the section with the following conjecture:

\begin{conjecture}
\label{CONJ:STABJ}
The Brill-Noether locus $\Bw((s+k)\cdot \underline{1},d,k)$ is not empty whenever 
$$2(\gamma-1)\leq s,\quad  \gamma\leq d\leq s \qquad \mbox{ and }\qquad k g_i \leq 1+s(g_i-1), \forall i = 1,\dots \gamma,$$
for any curve of compact type and $\w$ in a suitable neighborhood of $\underline{\eta}$.
\end{conjecture}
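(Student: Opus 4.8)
The plan is to obtain the statement by assembling two ingredients that have already been established: the existence of a small-slope component of $\Uw(s\cdot\underline{1},d)$ near the canonical polarization, furnished by Propositions \ref{PROP:TEC}, \ref{PROP:Chain} and \ref{PROP:Comb}, and the component-producing machinery of Theorem \ref{THM:Main}. Recall that Theorem \ref{THM:Main} takes as input a non-empty irreducible component $X_{d_1,\dots,d_\gamma}\subset\Uw(s\cdot\underline{1},d)$ with $0<d_i\leq s$ for all $i$, together with the numerical hypothesis $k\leq 1+s(g_i-1)$, and returns an irreducible component $Z$ of $\Bw((s+k)\cdot\underline{1},d,k)$ of the expected dimension $\beta_C(s+k,d,k)$. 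Since the defining condition of a component ``with small slopes'' is precisely $0<d_i\leq s$, the existence propositions and Theorem \ref{THM:Main} dovetail, and the whole proof reduces to producing such a component for a good polarization $\w$ close to $\underline{\eta}$.

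First I would split into the two cases of the hypothesis. If $d,s,\gamma$ satisfy one of the three conditions of Proposition \ref{PROP:TEC}, that proposition yields a neighborhood $U_1$ of $\underline{\eta}$ such that for every $\w\in U_1$ the space $\Uw(s\cdot\underline{1},d)$ has a non-empty small-slope component $X_{d_1,\dots,d_\gamma}$. If instead $C$ is chain-like or comb-like with $s\geq 2(\gamma-1)$ and $\gamma\leq d\leq s$, I would invoke Proposition \ref{PROP:Chain} or Proposition \ref{PROP:Comb}, which give the identical conclusion. In either case the constraint $k\leq 1+s(g_i-1)$ for all $i$ is exactly the one required by Theorem \ref{THM:Main}, so no additional arithmetic is needed.

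Next I would reconcile the polarization with the goodness hypothesis of Theorem \ref{THM:Main}. As $C$ is stable with $p_a(C)\geq 2$, the canonical polarization $\underline{\eta}$ is good; since goodness is an open condition (Lemma \ref{LEM:STARCONDITIONS}(c)), there is a neighborhood $U_2$ of $\underline{\eta}$ consisting of good polarizations. Setting $U=U_1\cap U_2$ and $r=s+k$, for every $\w\in U$ the polarization is good and $\Uw(s\cdot\underline{1},d)$ contains the small-slope component $X_{d_1,\dots,d_\gamma}$; applying Theorem \ref{THM:Main} then produces an irreducible component of $\Bw(r\cdot\underline{1},d,k)$ of dimension $\beta_C(r,d,k)$, and in particular shows non-emptiness.

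Because every substantive step is delegated to the earlier results, I do not expect a genuine mathematical obstacle; the argument is essentially a synthesis. The one point that requires care --- the closest thing to a subtle step --- is the bookkeeping of neighborhoods: the small-slope component furnished by the existence propositions and the goodness of $\w$ must hold simultaneously. This is handled by passing to the intersection $U_1\cap U_2$ and relying on Lemma \ref{LEM:STARCONDITIONS}(c), which transports both the stability condition $(\star)_j$ and the goodness of $\underline{\eta}$ to all nearby polarizations.
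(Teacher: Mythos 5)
There is a genuine gap, and it is the very reason the paper states this result as a conjecture rather than a theorem. The statement you are proving is Conjecture \ref{CONJ:STABJ}, whose hypotheses are purely numerical --- $2(\gamma-1)\leq s$, $\gamma\leq d\leq s$, and $kg_i\leq 1+s(g_i-1)$ --- and which is asserted for an \emph{arbitrary} nodal curve of compact type. Your opening move, ``split into the two cases of the hypothesis'' (either $d,s,\gamma$ satisfy one of the three conditions of Proposition \ref{PROP:TEC}, or $C$ is chain-like or comb-like), imports hypotheses that are not in the statement: those are the hypotheses of Theorem \ref{THM:MAIN-B}. What you have written is a correct synthesis of Theorem \ref{THM:MAIN-B} from Propositions \ref{PROP:TEC}, \ref{PROP:Chain}, \ref{PROP:Comb} and Theorem \ref{THM:Main} --- which is exactly how the paper proves that theorem --- but it does not prove the conjecture, because the conjecture's numerical hypotheses do not imply any of those case hypotheses for a general compact-type curve.

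The missing ingredient is the existence of a small-slope component $X_{d_1,\dots,d_\gamma}\subset \Uw(s\cdot\underline{1},d)$ near the canonical polarization for an arbitrary tree-shaped curve. Concretely, one must exhibit integers $d_i$ with $\sum_{i=1}^{\gamma}d_i=d$ and $0<d_i\leq s$ satisfying condition \eqref{EQ:STARJcanonical}, namely
$$\rk_{\underline{\eta}}(\cO_{A_j})\,d-s/2<\sum_{C_i\subseteq A_j}d_i<\rk_{\underline{\eta}}(\cO_{A_j})\,d+s/2 \qquad \mbox{for all } j=1,\dots,\gamma-1,$$
and these constraints depend on the dual tree of $C$ through the subcurves $A_j$ and on the canonical weights $\eta_i$ (hence on the genera and the distribution of nodes). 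The paper solves this integer feasibility problem only in special situations: for any tree when $\gamma\leq d\leq s/2+1$ (Proposition \ref{PROP:TEC}(a)), for trees whose weights satisfy the extra geometric conditions of Proposition \ref{PROP:TEC}(b) or the two-sided bound \eqref{EQ:EtaiD} of \ref{PROP:TEC}(c), and for chains and combs (Propositions \ref{PROP:Chain} and \ref{PROP:Comb}). In the remaining range $s/2+1<d\leq s$, for a general tree (neither a chain nor a comb) whose canonical weights satisfy none of these conditions, no result in the paper produces the required component, and your argument delegates precisely this step to propositions whose hypotheses you cannot invoke. Until that combinatorial existence statement is established for all compact-type curves, the conjecture remains open.
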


\begin{bibdiv}
\begin{biblist}

\bib{ACGH}{book}{
   author={Arbarello, E.},
   author={Cornalba, M.},
   author={Griffiths, P. A.},
   author={Harris, J.},
   title={Geometry of algebraic curves. Vol. I},
   series={Grundlehren der mathematischen Wissenschaften [Fundamental
   Principles of Mathematical Sciences]},
   volume={267},
   publisher={Springer-Verlag, New York},
   date={1985},
   pages={xvi+386},
   isbn={0-387-90997-4},
   doi={10.1007/978-1-4757-5323-3},
}

\bib{ACG}{book}{
   author={Arbarello, E.},
   author={Cornalba, M.},
   author={Griffiths, P. A.},
   title={Geometry of algebraic curves. Volume II},
   series={Grundlehren der Mathematischen Wissenschaften [Fundamental
   Principles of Mathematical Sciences]},
   volume={268},
   note={With a contribution by Joseph Daniel Harris},
   publisher={Springer, Heidelberg},
   date={2011},
   pages={xxx+963},
   doi={10.1007/978-3-540-69392-5},
}


\bib{Bho06}{article}{
   author={Bhosle, U. N.},
   title={Maximal subsheaves of torsion-free sheaves on nodal curves},
   journal={J. London Math. Soc. (2)},
   volume={74},
   date={2006},
   number={1},
   pages={59--74},
   issn={0024-6107},
   doi={10.1112/S0024610706022836},
}

\bib{Bho07}{article}{
  author={Bhosle, U. N.},
  title={Brill-Noether theory on nodal curves},
  journal={Internat. J. Math.},
  volume={18},
  date={2007},
  number={10},
  pages={1133--1150},
  issn={0129-167X},
  doi={10.1142/S0129167X07004461},
}

\bib{Bho}{article}{
    author={Bhosle, U. N.},
    title={Coherent systems on a nodal curve},
    conference={title={Moduli spaces and vector bundles}},
    book={
      series={London Math. Soc. Lecture Note Ser.},
      volume={359},
      publisher={Cambridge Univ. Press, Cambridge},
  },
  date={2009},
  pages={437--455}
}

\bib{BGMN}{article}{
  author={Bradlow, S. B.},
  author={Garc\'{\i}a-Prada, O.},
  author={Mu\~{n}oz, V.},
  author={Newstead, P. E.},
  title={Coherent systems and Brill-Noether theory},
  journal={Internat. J. Math.},
  volume={14},
  date={2003},
  number={7},
  pages={683--733},
  issn={0129-167X},
  doi={10.1142/S0129167X03002009},
}

\bib{BG02}{article}{
  author={Bradlow, S. B.},
  author={Garc\'{\i}a-Prada, O.},
  title={An application of coherent systems to a Brill-Noether problem},
  journal={J. Reine Angew. Math.},
  volume={551},
  date={2002},
  pages={123--143},
  issn={0075-4102},
  doi={10.1515/crll.2002.079},
}

\bib{BGN97}{article}{
  author={Brambila-Paz, L.},
  author={Grzegorczyk, I.},
  author={Newstead, P. E.},
  title={Geography of Brill-Noether loci for small slopes},
  journal={J. Algebraic Geom.},
  volume={6},
  date={1997},
  number={4},
  pages={645--669},
  issn={1056-3911},
}



\bib{BFCoh}{article}{
  author={Brivio, S.},
  author={Favale, F. F.},
  title={Coherent systems on curves of compact type},
  journal={\it Journal of Geometry and Physics},
  volume={158},
  date={2020},
  number={103850},
  doi={10.1016/j.geomphys.2020.103850},
}

\bib{BFVec}{article}{
  author={Brivio, S.},
  author={Favale, F. F.},
  title={On vector bundle over reducible curves with a node},
  date={2021},
  volume={21},
  number={3},
  pages={299--312},
  doi={10.1515/advgeom-2020-0010},
}

\bib{BFPol}{article}{
  author={Brivio, S.},
  author={Favale, F. F.},
  title={Nodal curves and polarization with good properties},
  date={2021},
  journal={Revista Matematica Complutense},
  doi={10.1007/s13163-021-00404-z},
}

\bib{BFBGN}{article}{
  author={Brivio, S.},
  author={Favale, F. F.},
  title={Coherent systems and BGN extensions on nodal reducible curves},
  journal={International Journal of Mathematics},
  date={2022},
  volume={33},
  number={4},
  doi={10.1142/S0129167X22500276}
}




\bib{Cap}{article}{
   author={Caporaso, L.},
   title={Linear series on semistable curves},
   journal={Int. Math. Res. Not. IMRN},
   date={2011},
   number={13},
   pages={2921--2969},
   issn={1073-7928},
   doi={10.1093/imrn/rnq188},
}

\bib{God73}{book}{
   author={Godement, R.},
   title={Topologie alg\'{e}brique et th\'{e}orie des faisceaux},
   language={French},
   series={Publications de l'Institut de Math\'{e}matique de l'Universit\'{e} de
   Strasbourg, XIII},
   note={Troisi\`eme \'{e}dition revue et corrig\'{e}e},
   publisher={Hermann, Paris},
   date={1973},
   pages={viii+283},
}

\bib{HL}{book}{
   author={Huybrechts, D.},
   author={Lehn, M.},
   title={The geometry of moduli spaces of sheaves},
   series={Cambridge Mathematical Library},
   edition={2},
   publisher={Cambridge University Press, Cambridge},
   date={2010},
   pages={xviii+325},
   isbn={978-0-521-13420-0},
   doi={10.1017/CBO9780511711985},
}

\bib{KN}{article}{
  author={King, A. D.},
  author={Newstead, P. E.},
  title={Moduli of Brill-Noether pairs on algebraic curves},
  journal={Internat. J. Math.},
  volume={6},
  date={1995},
  number={5},
  pages={733--748},
  issn={0129-167X},
  doi={10.1142/S0129167X95000316},
}

\bib{Lau}{article}{
   author={Laumon, G.},
   title={Fibr\'{e}s vectoriels sp\'{e}ciaux},
   language={French, with English summary},
   journal={Bull. Soc. Math. France},
   volume={119},
   date={1991},
   number={1},
   pages={97--119},
   issn={0037-9484},
}


\bib{Mer1}{article}{
   author={Mercat, V.},
   title={Le probl\`eme de Brill-Noether pour des fibr\'{e}s stables de petite
   pente},
   language={French},
   journal={J. Reine Angew. Math.},
   volume={506},
   date={1999},
   pages={1--41},
   issn={0075-4102},
   doi={10.1515/crll.1999.005},
}

\bib{Mer2}{article}{
   author={Mercat, V.},
   title={Le probl\`eme de Brill-Noether et le th\'{e}or\`eme de Teixidor},
   language={French, with English summary},
   journal={Manuscripta Math.},
   volume={98},
   date={1999},
   number={1},
   pages={75--85},
   issn={0025-2611},
   doi={10.1007/s002290050126},
}

\bib{MerP}{article}{
   author={Mercat, V.},
   title={Le probl\`eme de Brill-Noether: pr\`esentation},
   language={French},
   journal={J. Reine Angew. Math.},
   volume={506},
   date={2001},
   pages={1--41},
   issn={0075-4102},
   review={\MR{1665673}},
   doi={10.1515/crll.1999.005},
}

\bib{Sun}{article}{
   author={Narasimhan, M. S.},
   title={Special divisors and vector bundles},
   journal={Tohoku Math. J. (2)},
   volume={39},
   date={1987},
   number={2},
   pages={175--213},
   issn={0040-8735},
   doi={10.2748/tmj/1178228323},
}

\bib{NR}{book}{
    author={Narasimhan, M. S.},
    author={Ramanan, S.},
    title={Geometry of Hecke Cycles-I},
    note={C. P. Ramanujam—a tribute, pp. 291–345},
    note={Tata Inst. Fund. Res. Studies in Math.}, 
    edition={Springer, Berlin-New York}, 
    volume={8},
    date={1978},
    pages={291-345}
}  

\bib{RV}{article}{
   author={Raghavendra, N.},
   author={Vishwanath, P. A.},
   title={Moduli of pairs and generalized theta divisors},
   journal={Tohoku Math. J. (2)},
   volume={46},
   date={1994},
   number={3},
   pages={321--340},
   issn={0040-8735},
   review={\MR{1289182}},
   doi={10.2748/tmj/1178225715},
}

\bib{Ses}{book}{
  author={Seshadri, C. S.},
  title={Fibr\'{e}s vectoriels sur les courbes alg\'{e}briques},
  language={French},
  series={Ast\'{e}risque},
  volume={96},
  note={Notes written by J.-M. Drezet from a course at the \'{E}cole Normale
  Sup\'{e}rieure, June 1980},
  publisher={Soci\'{e}t\'{e} Math\'{e}matique de France, Paris},
  date={1982},
  pages={209},
}

\bib{TeiBN1}{article}{
   author={Teixidor i Bigas, M.},
   title={Brill-Noether theory for vector bundles of rank $2$},
   journal={Tohoku Math. J. (2)},
   volume={43},
   date={1991},
   number={1},
   pages={123--126},
   issn={0040-8735},
   doi={10.2748/tmj/1178227540},
} 

\bib{TeiBN2}{article}{
   author={Teixidor i Bigas, M.},
   title={Brill-Noether theory for stable vector bundles},
   journal={Duke Math. J.},
   volume={62},
   date={1991},
   number={2},
   pages={385--400},
   issn={0012-7094},
   doi={10.1215/S0012-7094-91-06215-0},
} 
 
\bib{Tei91}{article}{
  author={Teixidor i Bigas, M.},
  title={Moduli spaces of (semi)stable vector bundles on tree-like curves},
  journal={Math. Ann.},
  volume={290},
  date={1991},
  number={2},
  pages={341--348},
  issn={0025-5831},
  doi={10.1007/BF01459249},
} 
    
\bib{Tei95}{article}{
  author={Teixidor i Bigas, M.},
  title={Moduli spaces of vector bundles on reducible curves},
  journal={Amer. J. Math.},
  volume={117},
  date={1995},
  number={1},
  pages={125--139},
  issn={0002-9327},
  doi={10.2307/2375038},
}    
    
\end{biblist}
\end{bibdiv}

\end{document}